\date{\today}
\newtheorem{thm}{Theorem}[section]
\newtheorem{lem}[thm]{Lemma}
\newtheorem{prop}[thm]{Proposition}
\theoremstyle{definition}
\theoremstyle{remark}
\newtheorem{rem}[thm]{Remark}
\numberwithin{equation}{section}
\newcommand{\R}{\mathbb R}
\newcommand{\Z}{\mathbb Z}
\newcommand{\Na}{\mathbb N}
\newcommand{\He}{\mathbb H}
\newcommand{\C}{{\mathbb C}}
\renewcommand{\Im}{\operatorname{Im}}
\title[Chernoff and Ingham type theorems]
{ Theorems of Chernoff and Ingham \\ for certain eigenfunction expansions}
\author[ Ganguly and Thangavelu]{ Pritam Ganguly and Sundaram Thangavelu}
\address[P. Ganguly, S. Thangavelu]{Department of Mathematics\\
Indian Institute of Science\\
560 012 Bangalore, India}
\email{pritamg@iisc.ac.in, veluma@iisc.ac.in}
\date{}
 \keywords{Chernoff's theorem, Compact Symmetric Spaces, Hermite and special Hermite expansions, Ingham's theorem, Uncertainty principle}
 \subjclass[2010]{Primary:  43A85, 42C05. Secondary: 33C45, 35P10}
\begin{document}

\maketitle

\begin{abstract}  We prove an uncertainty principle for certain eigenfunction expansions on $ L^2(\mathbb{R}^+,w(r)dr) $ and use it to prove analogues of theorems of  Chernoff and Ingham for 
Laplace-Beltrami operators on compact symmetric spaces, special Hermite operator on $ \mathbb{C}^n $ and Hermite operator on $ \mathbb{R}^n.$
 \end{abstract}

\section{Introduction} Suppose $ f $ is a non trivial function from $ L^1(\R^n) $ which vanishes on an open set $ V.$  Then what is the best possible admissible decay  for the Fourier transform $ \hat{f}(\xi)$  defined by
$$ \hat{f}(\xi) = (2\pi)^{-n/2} \int_{\R^n}  e^{-i x \cdot \xi}\, f(x)\, dx \,\, ?$$ 
Under the added assumption that $ \hat{f} \in L^1(\R^n) $ we have the inversion formula 
$$  f(x)  = (2\pi)^{-n/2} \int_{\R^n}  e^{i x \cdot \xi}\, \hat{f}(\xi) \, dx$$ 
and from the easy part of the well known Paley-Wiener theorem, it is clear that $ \hat{f} $ cannot have compact support. A moment's thought staring at the inversion formula also reveals that $ \hat{f}$ cannot have any exponential decay. For, if $ |\hat{f}(\xi)| \leq C e^{-a|\xi|} $ then $ f $ extends as a holomorphic function in an open neighbourhood of $ \R^n $ in $ \C^n $ and hence cannot vanish on any open set. Therefore, it is natural to ask what is the best possible estimate of the form $ |\hat{f}(\xi)| \leq C e^{-\psi(|\xi|)} $ that is compatible with vanishing of $ f $ on an open set.\\

In the one dimensional case, this problem has been addressed by Ingham \cite{I}, Levinson \cite{Levinson}, Paley and Wiener \cite{PW1,PW2}  and their results are in terms of  non integrability of $ \psi(t) \, t^{-2} $ over $ [1,\infty ).$ Recently this problem has received considerable attention and several versions have been proved  in the contexts of $ \R^n,$ nilpotent Lie groups and  compact and non-compact Riemannian symmetric spaces, see the works \cite{BSR}, \cite{BPR}, \cite{BS},\cite{BR} of Bhowmik, Pusti, Ray and  Sen in various combinations of authorship. Recently, in a joint work with Bagchi and Sarkar \cite{BGST} we have proved an analogue of Ingham's theorem for the operator valued Fourier transform on the Heisenberg group.\\  

For the convenience of the reader, let us recall the original version of Ingham's theorem for the Fourier transform on $ \R $ proved in \cite{I}.
	\begin{thm}[Ingham]\label{ingh}
		Let $ \theta(y) $ be a nonnegative even function on $\R$ such that $ \theta(y) $ decreases to zero when $ y \rightarrow \infty.$ There exists a nonzero continuous function $ f $ on $ \R,$ equal to zero outside an interval $ (-a,a) $ having Fourier transform $ \widehat{f} $ satisfying the estimate $ |\widehat{f}(y)|\leq C e^{-|y|\,\theta(y)} $ if and only if $ \theta $ satisfies $ \int_1^\infty \theta(t) t^{-1} dt <\infty.$
	\end{thm}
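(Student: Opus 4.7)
The plan is to prove the two implications separately. For the \emph{``if''} direction, assuming $\int_1^\infty \theta(t)/t\,dt < \infty$, the goal is to construct $f$ as the inverse Fourier transform of a suitably chosen entire function $F$ of exponential type $\leq a$ satisfying the decay $|F(y)| \leq Ce^{-|y|\theta(|y|)}$ on $\R$; the Paley--Wiener theorem will then identify $F$ as the Fourier transform of a continuous function supported in $[-a,a]$. The classical device is to take an infinite product of sinc-type factors,
\begin{equation*}
F(z) = \prod_{k=1}^\infty \frac{\sin(a_k z)}{a_k z},
\end{equation*}
which is entire of exponential type $\sum_k a_k$. The sequence $\{a_k\}$ should be chosen strictly decreasing to zero with $\sum_k a_k \leq a$, and calibrated so that the number of indices with $a_k|y|>1$ grows like the ``inverse'' of $t\mapsto t\theta(t)$.

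For real $y$, using $|\sin x/x| \leq \min(1,1/|x|)$, one obtains
\begin{equation*}
\log|F(y)| \;\leq\; -\sum_{k:\, a_k|y|>1} \log(a_k|y|).
\end{equation*}
A dyadic block estimate converts this discrete sum into a Riemann approximation of $|y|\int_{1/|y|}^{\infty} \theta(t)\,t^{-1}\,dt$; the convergence of $\int_1^\infty \theta(t)/t\,dt$ is exactly what is needed to make the majorising series summable and to bound $|F(y)|$ by $Ce^{-|y|\theta(|y|)}$, giving the required $f = F^\vee$.

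For the \emph{``only if''} direction, I would argue by contradiction. Suppose such an $f \not\equiv 0$ exists, supported in $(-a,a)$. By Paley--Wiener, $\widehat{f}$ extends to an entire function of exponential type $\leq a$, and for any nonzero entire function $F$ of finite exponential type, a classical Jensen/log-integrability inequality gives
\begin{equation*}
\int_{\R} \frac{\log^{-}|F(y)|}{1+y^2}\, dy \;<\; \infty.
\end{equation*}
The hypothesis $|\widehat{f}(y)| \leq Ce^{-|y|\theta(|y|)}$ yields $\log^{-}|\widehat{f}(y)| \geq |y|\theta(|y|) - \log C$ for large $|y|$. Comparing $t\theta(t)/(1+t^2)$ with $\theta(t)/t$ at infinity, divergence of $\int_1^\infty \theta(t)/t\,dt$ forces $\int_{\R}\log^{-}|\widehat{f}(y)|\,(1+y^2)^{-1}dy = \infty$, contradicting the log-integrability bound and thus forcing $\widehat{f}\equiv 0$.

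The main obstacle is the precise discretisation in the ``if'' direction: one must choose $\{a_k\}$ so that both $\sum_k a_k \leq a$ and the lower bound $\sum_{k:\, a_k|y|>1} \log(a_k|y|) \geq |y|\theta(|y|)$ hold simultaneously, which is where the hypothesis on $\theta$ enters sharply. The ``only if'' direction is comparatively routine once Paley--Wiener and the log-integral inequality for entire functions of exponential type are available.
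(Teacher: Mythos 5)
The paper itself does not prove Theorem \ref{ingh}: it is quoted from Ingham \cite{I}, with the remark that Ingham's proof of the necessity reduces to the Denjoy--Carleman theorem on quasi-analytic functions, and the machinery the paper actually develops (the Carleman moment condition of Theorem \ref{carleman} and the Chernoff-type theorems) is precisely that quasi-analyticity route adapted to eigenfunction expansions. Your outline is correct but follows a partly different path. For the sufficiency, the infinite product $\prod_k \sin(a_k z)/(a_k z)$ with $\sum_k a_k\leq a$ is exactly Ingham's construction (and is what the paper invokes in Section 6 to produce the functions $h_\delta$ and $g_\delta$); the calibration of $a_k$ against the inverse of $t\mapsto t\theta(t)$, which you rightly identify as the crux, is where $\int_1^\infty \theta(t)t^{-1}dt<\infty$ enters, and you should also record that the product decays faster than any polynomial so that $F\in L^1(\R)$ and $f=F^\vee$ is genuinely continuous and supported in $[-a,a]$ by Paley--Wiener (and note the degenerate case where $t\theta(t)$ stays bounded, for which the asserted decay is vacuous and any bump function works). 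For the necessity you replace quasi-analyticity by the logarithmic integral for the Cartwright class: a nonzero entire function of exponential type bounded on $\R$ has $\int_\R \log^{-}|F(y)|\,(1+y^2)^{-1}dy<\infty$, and since $t\theta(t)(1+t^2)^{-1}\sim\theta(t)t^{-1}$ at infinity the divergence hypothesis contradicts this. That argument is sound and arguably shorter in one dimension, but it leans on complex-analytic structure specific to $\C$; the Denjoy--Carleman/Carleman-moment route is the one the paper can transport to compact symmetric spaces and to Hermite and special Hermite expansions, where no Cartwright-class theory of comparable strength is available.
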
 

The original proof of Ingham for the Fourier transform on $ \R $  reduced matters to the celebrated theorem of Denjoy and Carleman on quasi analytic  functions. The $ n$-dimensional version in the case of $ \R^n $ can be  proved using Chernoff's theorem \cite{CH2}  which is a higher dimensional analogue  of the Denjoy-Carleman theorem. The proof presented in \cite{BPR} in the context of non-compact symmetric spaces depends on a higher dimensional version of Carleman's theorem due to de Jeu \cite{J} which is closely related to determinacy of measures and Carleman condition on its  moments. Even though all the results mentioned above are formulated in terms of the Fourier transform on the underlying Lie group, the proofs  are invariably based on spectral properties of the associated Laplace-Beltrami operators.\\

Let us now consider the general setting of a  strictly positive elliptic partial differential operator $ P $ of order 2 on a Riemannian manifold $ \Omega.$ Assume that the spectrum of $ P $ is discrete consisting of distinct points  $  \lambda_{k+1} > \lambda_k\geq 0.$  Denoting the spectral projections associated to $ \lambda_k $ by $ P_k $ we have the eigenfunction expansion
$$  f = \sum_{k=0}^\infty  P_kf , \,\,\, f \in L^2(\Omega).$$ 
Assuming that $ f $ is nontrivial and vanishing on an open set $ V ,$ we can ask for the best possible decay of  $ \|P_k f\|_2 $ as a function of the eigenvalue $ \lambda_k.$  Obviously, any decay that leads us to conclude that $ f $ is real analytic, is not admissible. For example, in view of a theorem of Kotake and Narasimhan \cite{KN}, any decay that leads to the estimates 
$$  \| P^m f\|_2 = \big(  \sum_{k=0}^\infty \lambda_k^{2m} \|P_kf\|_2^2 \big)^{1/2} \leq (2m)! C^m $$ 
for all $ m \in \Na$ is not allowed.  As can be easily checked, under the assumption that $ \lambda_k $ grows like $ k^2 $ as $ k \rightarrow \infty,$ exponential decays like $ \|P_kf\|_2 \leq C e^{-a\sqrt{\lambda_k}} $ are ruled out.  As in the case of Lie groups, we look for functions $ \psi $ such that $ \|P_k f\|_2 \leq C e^{-\psi(\sqrt{\lambda_k})} $ are admissible.\\

In this paper we study the above problem in the following three settings: (i)  The Laplace-Beltrami operator $ \Delta $ on a compact Riemannian symmetric space $ X,$ (ii) the special Hermite operator $ L $ on $\C^n$ where the projections are infinite dimensional and (iii) the Hermite operator $ H $ on $ \R^n.$ In all these cases we prove analogues of Chernoff's theorem for the associated operators and then use the same to prove a version of Ingham's theorem.

\begin{thm}
	\label{chg}
	 Let $ (\Omega, P ) $ be any one of the three pairs listed above. Suppose $ f \in C^{\infty}(\Omega) $ is such that $P^mf\in L^2(\Omega)$ for all $m\geq 0$ and the sequence $ \| P^m f\|_2 $ satisfies the  Carleman condition $ \sum_{m=1}^\infty   \| P^m f\|_2^{-1/2m} = \infty.$  Then $ f $ cannot vanish on any open subset of $ \Omega $ unless  it is identically zero.
\end{thm}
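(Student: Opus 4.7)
The plan is to reduce each of the three settings $(\Omega, P)$ to a one-dimensional Chernoff-type statement on a weighted half-line $L^2(\mathbb{R}^+, w(r)\,dr)$, which is precisely the uncertainty principle promised in the abstract. In each case one invokes the natural spectral decomposition compatible with $P$: the spherical harmonic / Laguerre expansion on $\mathbb{R}^n$ for the Hermite operator $H$, the bigraded spherical harmonic / special Hermite expansion on $\mathbb{C}^n$ for $L$, and the decomposition into $K$-types combined with Helgason-type polar coordinates on the compact symmetric space $X = G/K$ for $\Delta$. In all three cases $f$ decomposes into components $F_\nu$ depending only on a radial coordinate $r$, and $P$ acts on each $F_\nu$ as a second-order ordinary differential operator $\mathcal{P}_\nu$ on $(0,\infty)$ with an explicit weight $w_\nu(r)$ reflecting the volume of the corresponding orbit.

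The key input from the decomposition is the Plancherel-type identity
\[
\|P^m f\|_{L^2(\Omega)}^2 = \sum_\nu \|\mathcal{P}_\nu^m F_\nu\|_{L^2(w_\nu\,dr)}^2,
\]
valid for every $m$, from which the Carleman bound $\sum_m \|P^m f\|_2^{-1/2m} = \infty$ on $f$ is inherited by each $F_\nu$ individually, possibly up to $\nu$-dependent constants that do not affect the divergence of the Carleman series. After moving the open set $V$ by an isometry of $(\Omega, P)$ so that $V$ contains a small neighborhood of the chosen base point, each $F_\nu$ vanishes on an initial interval $(0, \delta)$. The one-dimensional uncertainty principle on $L^2(\mathbb{R}^+, w_\nu\, dr)$ then forces $F_\nu \equiv 0$ for every $\nu$, and hence $f \equiv 0$ on $\Omega$.

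The main obstacle I anticipate is that $H$ on $\mathbb{R}^n$ and $L$ on $\mathbb{C}^n$ are not translation-invariant, so one cannot simply shift $V$ to contain the origin without altering the operator. A workable fix is to perform the spherical harmonic decomposition in coordinates centered at a chosen $x_0 \in V$ and use spherical means $F_\nu(r) = (M_r^\nu f)(x_0)$ adapted to the rotational symmetry around $x_0$; the Laplacian part of $P$ then supplies the dominant radial contribution, while the potential term produces only lower-order perturbations that do not destroy the Carleman growth after bounded rearrangement. In the compact symmetric space case the extra subtlety is that the polar decomposition is truly one-dimensional only when $X$ has rank one; for higher rank one further projects via the spherical functions $\phi_\lambda$ to reduce to a one-dimensional problem at each spectral parameter, after which the weighted uncertainty principle on $L^2(\mathbb{R}^+, w\, dr)$ applies as in the model cases.
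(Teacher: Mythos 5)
Your overall strategy (reduce to a weighted one-dimensional Chernoff theorem on $L^2(\mathbb{R}^+, w\,dr)$) is the right one and is the paper's as well, but the specific reduction you propose has two genuine gaps. First, and most seriously, your treatment of the non-translation-invariance of $H$ and $L$ does not work as stated. You propose to expand $f$ in spherical harmonics centered at $x_0\in V$ and absorb the resulting perturbation of the potential as lower-order terms that ``do not destroy the Carleman growth after bounded rearrangement.'' But the Carleman condition $\sum_m \|P^mf\|_2^{-1/2m}=\infty$ is a bare divergence statement with no quantitative margin: it is stable under term-wise factors of size $C^m$ but not under faster-growing ones, and iterating the conjugated operator $m$ times produces cross terms whose norms you would need to control by $\|H^jf\|_2$, $j\le m$, with exactly such constants --- nothing in your sketch delivers that. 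The paper avoids the problem entirely: at each point $\xi\in V$ it forms not the ordinary spherical mean but the Weyl transform of the surface measure, $F_\xi(r)=W(\mu_r)f(\xi)$ (respectively the twisted spherical mean $f\times\mu_r(z)$ for $L$), which satisfies the exact intertwining $L_0^mF_\xi(r)=W(\mu_r)(H^mf)(\xi)$ with the radial Laguerre operator $L_0$ at \emph{every} $\xi$, and whose Laguerre coefficients are precisely the pointwise values $P_kf(\xi)$; the transfer of the Carleman condition then follows from the kernel bound $\Phi_k(\xi,\xi)\le C(2k+n)^{n/2-1}$. This intertwining object is the key device your proposal is missing.

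Second, the one-dimensional theorem you are invoking (the paper's Theorem 2.5) requires the normalisation $\psi_k(0)=1$: its proof extracts the moment identities $\sum_k\lambda_k^m c_k\hat f(k)=0$ by letting $r\to0$ in the expansion of $P^mf$. For every nonzero spherical harmonic degree (or nontrivial $K$-type) the radial eigenfunctions vanish at $r=0$, so the theorem does not apply to your components $F_\nu$ with $\nu\neq0$ without a further modification (for instance dividing by $r^{|\nu|}$ and reworking the limit and the associated measure), which you do not supply. The paper sidesteps this as well: it only ever needs the \emph{zonal} component of $f$ around each point $g\in V$, concludes that the spectral projections vanish at every $g\in V$, and then uses real analyticity of the projections (eigenfunctions of an analytic hypoelliptic operator; for Hermite, $e^{|\xi|^2/2}P_kf(\xi)$ is a polynomial) to propagate the vanishing to all of $\Omega$. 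That analyticity step is absent from your argument, and it is precisely what allows one to work with a single radial expansion per point of $V$ rather than the full harmonic decomposition about one center.
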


By making use of spherical means, we reduce matters to proving an analogue of the above theorem  for eigenfunction expansions for the radial part  of $ P $ realised on the space $ L^2(A,w(r)dr) $ where $ A \subset (0,\infty) $ is an interval and $ w $ is a positive weight function. We also remark that the Carleman condition can be considerably weakened. We then obtain the  following analogue of Theorem \ref{ingh}: 
\begin{thm}\label{ingh_L2}
	 Let $ (\Omega, P ) $ be any one of the three pairs listed above. Let $ \theta(t) $ be a nonnegative even function on $\R$ such that $ \theta(t) $ decreases to zero when $ t \rightarrow \infty.$ There exists a nonzero compactly supported continuous function $ f $ on $ \Omega$  satisfying 
	 \begin{equation} 
	 \|P_kf\|_2 \leq C e^{-\sqrt{\lambda_k}\theta(\sqrt{\lambda_k})},
	  \end{equation}
for all $ k \in \Na$	 if and only if the function $ \theta $ satisfies $ \int_1^\infty \theta(t) t^{-1} dt <\infty.$
\end{thm}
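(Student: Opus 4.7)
The plan is to prove the two implications separately: necessity will reduce cleanly to the Chernoff-type Theorem \ref{chg}, while sufficiency will proceed by an explicit construction patterned on Ingham's original product argument and lifted to $\Omega$ via the radial reduction flagged by the authors after Theorem \ref{chg}.

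For necessity, I would argue by contradiction. Suppose a nontrivial compactly supported continuous $f$ on $\Omega$ satisfies the decay estimate while $\int_1^\infty \theta(t)t^{-1}\,dt = \infty$. The starting point is
$$\|P^m f\|_2^2 = \sum_k \lambda_k^{2m}\|P_kf\|_2^2 \le C^2 \sum_k \lambda_k^{2m} e^{-2\sqrt{\lambda_k}\theta(\sqrt{\lambda_k})}.$$
Using the Weyl-type growth of $\lambda_k$ with its multiplicities in each of the three settings (which contribute only polynomial factors), I would bound the sum by its continuous analogue and apply a saddle-point analysis of $s \mapsto s^{4m} e^{-2s\theta(s)}$: the maximizer $s_m$ satisfies $s_m \theta(s_m) \sim 2m$, yielding an estimate on $\|P^m f\|_2$ whose $(2m)$-th root behaves, up to harmless factors, like $s_m/e^{\theta(s_m)}$. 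A standard change-of-variable argument (the classical bridge between Ingham's integral and Denjoy--Carleman quasi-analyticity) then converts $\int_1^\infty \theta(t)t^{-1}\,dt = \infty$ into the Carleman divergence $\sum_m \|P^m f\|_2^{-1/(2m)} = \infty$. Since $f$ is compactly supported, it vanishes on a nonempty open set, so Theorem \ref{chg} forces $f \equiv 0$, a contradiction.

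For sufficiency, assume $\int_1^\infty \theta(t)t^{-1}\,dt < \infty$. Ingham's classical construction on $\R$ furnishes a nontrivial $g \in C_c(\R)$ with $|\widehat{g}(\xi)| \le C e^{-|\xi|\theta(|\xi|)}$: with a sequence $a_k \to \infty$ tailored to $\theta$ (so that $\sum a_k^{-1}$ converges at the rate dictated by the integral hypothesis), one takes $g$ to be the convolution of the normalized indicators $\mathbf{1}_{[-a_k,a_k]}/(2a_k)$, whose Fourier transform is $\prod_k \sin(a_k\xi)/(a_k\xi)$. To transfer this to $\Omega$, I would use the radial reduction announced in the paper: passing to the radial part of $P$ realised on $L^2(A, w(r)\,dr)$, one assembles a radial $f$ whose spectral profile tracks $\widehat{g}$ sampled at $\sqrt{\lambda_k}$, so that $\|P_kf\|_2 \le Ce^{-\sqrt{\lambda_k}\theta(\sqrt{\lambda_k})}$ by construction. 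The setting-appropriate Paley--Wiener theorem (Helgason's support theorem for the compact symmetric space, and the Laguerre/Hermite analogues for the special Hermite and Hermite operators) then ensures that the compact support of $g$ lifts to compact support of $f$ on $\Omega$.

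The main obstacle is the sufficiency lift. Each setting carries a distinct radial calculus (spherical function expansion on $X$, Laguerre expansion for $L$, Hermite expansion for $H$), and one must verify in each that compact support of the one-dimensional model $g$ genuinely survives the passage to $\Omega$. The special Hermite case is the most delicate, since the projections $P_k$ are infinite-dimensional and $\|P_k f\|_2$ does not reduce to a single coefficient; extra care is needed to match the spectral decay of $\widehat{g}$ to a bound on the full projection norm, rather than on individual eigenfunction components.
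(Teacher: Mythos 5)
Your architecture (necessity via a Carleman-condition bridge feeding into the Chernoff theorem; sufficiency via Ingham's one-dimensional sinc-product construction transferred to $\Omega$) matches the paper's for the compact symmetric space, but both halves have a concrete gap. On necessity: the ``standard change-of-variable argument'' converting $\int_1^\infty \theta(t)t^{-1}\,dt=\infty$ into $\sum_m \|P^mf\|_2^{-1/2m}=\infty$ is not available for arbitrary decreasing $\theta$. The paper's Proposition \ref{prop-carl} proves exactly this bridge, but only under the additional hypothesis $\theta(t)\geq c(1+t)^{-1/2}$; without a lower bound of this kind the tail of the sum (your saddle-point integral over the region where $s\theta(s)$ grows slowly) is not dominated by the bulk, and the clean bound $a_m \lesssim (Cm/\theta(2m^4))^{4m}$ on which the Carleman divergence rests breaks down. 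One cannot simply replace $\theta$ by $\max(\theta(t), c(1+t)^{-1/2})$, since that strengthens the assumed decay of $\|P_kf\|_2$ rather than weakening it. The paper removes the extra hypothesis by convolving $f$ with an approximate identity $h_\delta$ (resp.\ $g_\delta$) whose own spectral projections have decay $e^{-\sqrt{\lambda_k}\,\theta_\delta(\sqrt{\lambda_k})}$ with $\theta_\delta(t)\gtrsim (1+t)^{-1/2}$, so that $f\ast h_\delta$ falls under the restricted case, and then lets $\delta\to 0$; note that this regularization itself requires the explicit constructions of Section 6, so the two directions of the theorem are intertwined in a way your sketch does not register.

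On sufficiency: the appeal to ``the Laguerre/Hermite analogues'' of the Paley--Wiener theorem is the real missing idea. For the compact symmetric space the transfer is legitimate and is exactly what the paper does (Koornwinder's Paley--Wiener theorem for the Jacobi transform converts $\hat g\in\mathcal H$ into a compactly supported $K$-invariant $h$ with $\tilde h(m)=\hat g(m+(\alpha+\beta+1)/2)$). But no such theorem exists for Laguerre or Hermite expansions: prescribing the Laguerre coefficients to be samples of $\hat g$ at $\sqrt{2k+n}$ defines an $L^2$ function with no reason to be compactly supported. The paper's route is different: it imports from \cite{BGST} a compactly supported radial function on the Heisenberg group whose operator-valued Fourier transform has Ingham decay relative to the scaled Hermite operators, periodizes it to $\He^n/\Gamma$, and extracts the $\lambda=1$ Fourier coefficient in the central variable to produce the radial $g_\delta$ on $\C^n$ with the required bound on $\|g_\delta\times\varphi_k^{n-1}\|_2$ (Theorem \ref{splhermite-ingham}); the Hermite example is then obtained from this one via the formula $P_{2k}(f)=R_k^{n/2-1}(f_0)L_k^{n/2-1}(|x|^2)e^{-|x|^2/2}$ for radial $f$, together with a dimension-shift (constructing on $\R^{n+1}$ and integrating against $h_0$) when $n$ is odd. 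Your proposal correctly identifies the special Hermite case as the delicate one, but does not supply the mechanism that actually produces compact support there.
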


 Under the assumption that  $ \int_1^\infty \theta(t) t^{-1} dt =\infty ,$ the above theorem rules out the decay  (1.1) for any compactly supported function $ f.$  Actually we can prove more: any function for which (1.1) is valid cannot vanish on any non empty open set.  We can also replace  the decay  condition  by a much weaker pointwise decay of $P_kf(g).$ Indeed, we have the following version of Ingham's theorem. 

\begin{thm} Let $ (\Omega, P ) $ be any one of the three pairs listed above. Let $ \theta $ be a positive decreasing function on $ [0,\infty) ,$ which vanishes at  infinity. Assume that $ \int_1^\infty \theta(t) \, t^{-1} dt = \infty.$  If $ f \in L^2(\Omega) $ vanishes on an open set $ V ,$ then it cannot satisfy the decay condition $ |P_kf(g)| \leq C_g e^{- \sqrt{\lambda_k}\, \theta(\sqrt{\lambda_k})} $ for all $ g \in V $ unless it is identically zero.
\end{thm}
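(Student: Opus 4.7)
The plan is to upgrade the pointwise hypothesis to a uniform bound on a smaller open set by a Baire category argument, and then invoke the one-dimensional Ingham-type theorem on $L^2(A, w(r)\,dr)$ that underlies Theorems \ref{chg} and \ref{ingh_L2} by passing to spherical means. For each $N \in \Na$ set
\[
E_N = \bigcap_k \Big\{ g \in V : |P_kf(g)| \leq N\,e^{-\sqrt{\lambda_k}\,\theta(\sqrt{\lambda_k})} \Big\}.
\]
Since $P$ is elliptic, each eigenfunction $P_kf$ is smooth, so every $E_N$ is closed in $V$; the pointwise hypothesis gives $V = \bigcup_N E_N$, and the Baire category theorem produces a nonempty relatively open $V' \subset V$ on which
\[
|P_kf(g)| \leq N\,e^{-\sqrt{\lambda_k}\,\theta(\sqrt{\lambda_k})} \qquad (g \in V',\ k \in \Na)
\]
holds uniformly.

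Fix $g_0 \in V'$ and a ball $B(g_0, r_0) \subset V'$, and attach to $g_0$ the natural radial averaging operator $M_r$: the geodesic spherical mean on the compact symmetric space, the twisted spherical mean on $\C^n$, or the $SO(n)$-average in $\R^n$. Set $F(r) = (M_r f)(g_0)$. Because $f$ vanishes on $B(g_0, r_0)$, so does $F$ on $[0, r_0)$. The relevant Funk--Hecke / addition-type formula in each setting gives $(M_r P_k f)(g_0) = \varphi_k(r)\, P_k f(g_0)$ for an explicit radial eigenfunction $\varphi_k$ of the radial part of $P$ at eigenvalue $\lambda_k$, so $F$ has the eigenfunction expansion
\[
F(r) = \sum_k P_kf(g_0)\,\varphi_k(r),
\]
whose spectral coefficients inherit the exponential decay from the Baire step. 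Since $\int_1^\infty \theta(t)\,t^{-1}\,dt = \infty$, the one-dimensional Ingham-type theorem on $L^2(A, w(r)\,dr)$ that the authors use to prove Theorems \ref{chg} and \ref{ingh_L2} forces $F \equiv 0$, i.e.\ $(M_r f)(g_0) = 0$ for every $r \geq 0$.

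Running the same argument at every $g_0 \in V'$ yields $(M_r f)(g) = 0$ for all $g \in V'$ and $r \geq 0$, and the injectivity of the relevant spherical mean transform over centers in a nonempty open set then forces $f \equiv 0$ on $\Omega$. The main obstacle I expect is the radial reduction itself: one must verify in each of the three models that $F$ is sufficiently regular and that its spectral expansion takes exactly the form required by the one-dimensional theorem. In the compact symmetric-space setting this is classical Helgason theory; for the special Hermite operator it requires the Laguerre-function expansion together with the twisted convolution structure; and for the Hermite operator on $\R^n$, which enjoys only the $SO(n)$-symmetry about the origin, the reduction is most delicate and is likely effected via the bigraded decomposition of Hermite functions into Laguerre radial pieces and spherical harmonics.
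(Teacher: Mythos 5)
Your high-level skeleton --- reduce to a one-variable eigenfunction expansion of a spherical mean centred at a point of $V$, then apply a one-dimensional quasi-analyticity result --- is the paper's strategy, and your Baire step, while harmless, is unnecessary: the paper runs the argument at each fixed $g\in V$ separately, and the $g$-dependent constant $C_g$ is irrelevant because multiplicative constants do not affect the Carleman condition $\sum_m \|L_0^m F\|_2^{-1/2m}=\infty$. The serious gap is the sentence in which you invoke ``the one-dimensional Ingham-type theorem\dots forces $F\equiv 0$.'' No such ready-made theorem exists in the paper. What the paper has is the Chernoff-type Theorem \ref{chernoff}, whose hypothesis is a Carleman condition on the norms $\|L_0^m F\|_{L^2(A,w)}$, and the passage from the coefficient decay $e^{-\sqrt{\lambda_k}\,\theta(\sqrt{\lambda_k})}$ to that Carleman condition is the technical heart of the proof (Proposition \ref{prop-carl}). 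That proposition requires the auxiliary lower bound $\theta(t)\geq c(1+t)^{-1/2}$, which does \emph{not} follow from $\int_1^\infty\theta(t)t^{-1}dt=\infty$ for a decreasing $\theta$; removing it costs the paper all of Section 6, where compactly supported radial approximate identities with prescribed Ingham-type spectral decay are constructed (via Koornwinder's Paley--Wiener theorem for Jacobi coefficients in the symmetric space case, and via the Heisenberg group construction of \cite{BGST} for Theorem \ref{splhermite-ingham}) and convolved with $f$ before letting the parameter tend to zero. Your proposal supplies neither the quantitative moment estimate nor this approximation argument, so the central analytic content is missing. A smaller issue of the same kind: your closing appeal to ``injectivity of the spherical mean transform over centers in an open set'' is itself a nontrivial theorem (especially for twisted spherical means), whereas the paper simply reads off that all spectral coefficients $P_kf(g)$ vanish on the open set and uses their real analyticity.

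A second, localized gap is your radial reduction in the Hermite case. The operator $H=-\Delta+|x|^2$ is $SO(n)$-invariant only about the origin, so the plain rotation average of $f$ about a centre $\xi\neq 0$ does \emph{not} admit an expansion of the form $\sum_k P_kf(\xi)\varphi_k(r)$ in eigenfunctions of a fixed radial operator; the bigraded Laguerre--spherical harmonic decomposition you mention is unavailable off the origin. The paper's substitute is the Weyl transform of the spherical surface measure, $F_\xi(r)=W(\mu_r)f(\xi)=\int_{|z|=r}e^{i(x\cdot\xi+\frac{1}{2}x\cdot y)}f(\xi+y)\,d\mu_r$, an oscillatory spherical mean which satisfies $W(\mu_r)=\sum_k\frac{k!(n-1)!}{(k+n-1)!}\varphi_k^{n-1}(r)P_k$ and vanishes for small $r$ when $f$ vanishes near $\xi$; this, not the $SO(n)$-average, is what makes the one-dimensional reduction go through. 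In the compact symmetric space and special Hermite settings your proposed reduction is correct and coincides with the paper's.
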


Note that the pointwise decay condition on $ P_kf(g) $ is assumed only on $ V $ which is much weaker than the condition on $ \|P_kf\|_2 $ even when $ \Omega$ is compact. Interestingly, the proof only requires the one dimensional result of Carleman in the form stated by de Jeu in \cite{J}. In the case of Hermite expansions, we also give a different proof of the above theorem under a slightly different assumption on the behaviour of $ \theta .$\\

We remark that analogues of Theorem 1.4 can be established  for the Laplace-Beltrami operator on non-compact Riemannian symmetric spaces and the Dunkl Laplacian on $ \R^n,$   see \cite{GT}. \\

This paper is organised as follows. In Section 2 we prove Theorem \ref{chg} in a general setting. Using this we prove analogues of theorems of Chernoff and Ingham for Laplace-Beltrami operators on rank one compact symmetric spaces, special Hermite operator on $\C^n$ and Hermite operator on $\R^n$ respectively in Sections 3, 4 and 5.  %We also remark that in the case of compact symmetric spaces we work with  a modified Laplace-Beltrami operator whose eigenvalues are perfect squares. This  is  mainly done for cosmetic reasons and the same proof with minor modifications works for the unmodified operator. 
 Finally, in Section 6 we provide examples of compactly supported functions, the norms of whose spectral projections have Ingham type decay, thus  proving  the sharpness of Ingham's theorem and completing the proof of Theorem \ref{ingh_L2} in three different settings under consideration in this paper.

\section{Chernoff's theorem for  eigenfunction expansions}
 Given a positive weight function $ w(r), r \in \R^+ $ consider an orthogonal basis $ \psi_k , k \in \Na$ for  the Hilbert space $ L^2(\R^+, w(r) dr)$  where we assume that $ \psi_k(0) = 1.$ Assume that $ \psi_k $ are eigenfunctions of an elliptic differential operator $ P $ on $ \R^+ $ with eigenvalues  $ \lambda_k  \geq 0 $ which goes to infinity as $ k \rightarrow \infty.$ For $ f \in L^2(\R^+, w(r)dr) $ we have the norm-convergent expansion 
 \begin{equation}\label{expan}  f(r)  = \sum_{k=0}^\infty   c_k \, \hat{f}(k) \psi_k(r),\,\,\,  \hat{f}(k) =  \int_0^\infty f(r) \psi_k(r) w(r) dr \end{equation}
 where $ c_k^{-1}  = \int_0^\infty |\psi_k(r)|^2 w(r) dr.$ The Parseval's identity for the above expansions gives, for any $ m \in \Na $ the following: 
 $$ \| P^m f\|_2^2  = \sum_{k=0}^\infty  \lambda_k^{2m} \, c_k\, |\hat{f}(k)|^2 .$$ 
 Suppose we further assume  that both the sequences $ \lambda_k $  and $ c_k $ grow polynomially so that $ \sum_{k=1}^\infty \lambda_k^{-N} <\infty $ for large enough $ N.$ Then under the assumption that $ P^m f \in L^2(\R^+,w(r)dr) $ for a sufficiently large $ m $  we can prove that the series (\ref{expan}) converges uniformly and  hence $ f $ is continuous.\\
 
 Let $ f $ be such that $ P^m f \in L^2(\R^+,w(r)dr) $ for all $ m.$   Now we associate a Borel measure $ \mu_f $ on $ \R $ defined in terms of the sequence $( \hat{f}(k) ) $ as follows: for any  Borel function  $ \varphi $  on $ \R$ 
 $$ \int_{-\infty}^\infty \varphi(t) d\mu_f(t) =   \frac{1}{2} \sum_{k=0}^\infty c_k\, \big( \varphi(\sqrt{\lambda_k})+\varphi(-\sqrt{\lambda_k})\big) |\hat{f}(k)|.$$  Observe that  $ \int_{-\infty}^\infty \varphi(t) d\mu(t) =\int_{-\infty}^\infty \varphi(-t) d\mu(t) $ and hence $ \int_{-\infty}^\infty \varphi(t) d\mu_f(t) = 0 $ for all odd functions $ \varphi .$ Under the assumption that $ f $ vanishes on the interval $ 0 < r < \delta,$ the convergence of the series (\ref{expan}) to zero on the interval $ (0,\delta) $ allows us to apply $ P^m $ term by term to conclude that
 \begin{equation}\label{expan-1}
 P^mf(r)  = \sum_{k=0}^\infty \lambda_k^{m} \, c_k\, \hat{f}(k) \psi_k(r) = 0 
  \end{equation} 
 for $ 0<r<\delta.$  As $\psi_k(0)=1$ the equation  (\ref{expan-1}) implies that  
 \begin{equation}\label{exp-2}  \sum_{k=0}^\infty \lambda_k^{m} \, c_k\, \hat{f}(k)  = 0 ,\,\, m \in \Na .\end{equation}
  We now look for conditions on the function $ f $ so that polynomials are dense in  $ L^1(\R, d\mu_f ).$   \\
 
 The following result in conjunction with the above discussion gives rise to an analogue of Chernoff's theorem for the expansion (\ref{expan}).
 
 \begin{thm}\label{carleman} Let $ \mu $ be a finite positive Borel measure on $ \R $ for which all the moments $ M(m) =\int_{-\infty}^\infty t^m d\mu $ are finite. If we further assume that the moments satisfy the Carleman condition $ \sum_{m=1}^\infty  M(2m)^{-1/2m} = \infty,$ then polynomials are dense in $ L^p(\R,d\mu), 1 \leq p < \infty.$ 
 \end{thm}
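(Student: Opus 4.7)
The plan is to argue by duality and reduce to the Denjoy--Carleman quasi-analyticity theorem. Suppose, for contradiction, that polynomials are not dense in $L^p(\R, d\mu)$. By the Hahn--Banach theorem there is a nonzero function $g$ in the dual space $L^{p'}(\R, d\mu)$, where $1/p + 1/p' = 1$ (with $p' = \infty$ when $p = 1$), which annihilates every monomial:
\begin{equation}
\int_{-\infty}^\infty t^n\, g(t)\, d\mu(t) = 0 \qquad (n = 0, 1, 2, \ldots).
\end{equation}
Since $\mu$ is finite, H\"older's inequality forces $g \in L^1(d\mu)$, so $g\, d\mu$ is a finite complex Borel measure, and the object to analyse is its Fourier transform
\begin{equation}
F(x) = \int_{-\infty}^\infty e^{ixt}\, g(t)\, d\mu(t),\qquad x \in \R.
\end{equation}

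The core quantitative step is to control the sequence $B_n := \int_{-\infty}^\infty |t|^n\, |g(t)|\, d\mu(t)$. Cauchy--Schwarz gives $\int |t|^k\, d\mu \leq \sqrt{\mu(\R)\, M(2k)}$, which shows every absolute moment of $\mu$ is finite, and then H\"older provides the bound $B_n \leq \|g\|_{p'} \bigl(\int |t|^{np}\, d\mu\bigr)^{1/p}$. After normalising $\mu$ to a probability measure, the map $\alpha \mapsto (\int |t|^\alpha\, d\mu)^{1/\alpha}$ is non-decreasing, so comparing the sums $\sum_n B_n^{-1/n}$ and $\sum_m M(2m)^{-1/2m}$ against the integral $\int_1^\infty (\int |t|^\alpha d\mu)^{-1/\alpha} d\alpha$ shows the two series are simultaneously divergent; the hypothesis thus yields $\sum_n B_n^{-1/n} = \infty$. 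The finiteness of $B_n$ also justifies differentiation under the integral, so $F \in C^\infty(\R)$ with $|F^{(n)}(x)| \leq B_n$, and the vanishing of all moments gives $F^{(n)}(0) = 0$ for every $n \geq 0$.

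We are now in position to invoke the classical Denjoy--Carleman quasi-analyticity theorem: smooth functions on $\R$ whose derivatives satisfy $|F^{(n)}| \leq B_n$ with $\sum B_n^{-1/n} = \infty$ form a quasi-analytic class, so $F$ must be identically zero because all its derivatives vanish at $0$. Finally, $F \equiv 0$ is the Fourier transform of the finite signed measure $g\, d\mu$, and the uniqueness of the Fourier transform of finite Borel measures gives $g\, d\mu = 0$, forcing $g = 0$ $\mu$-almost everywhere and contradicting our choice of $g$. The main obstacle I anticipate is the bookkeeping in the middle step: transferring the Carleman hypothesis on the even moments of $\mu$ to a Carleman condition on $B_n$, which requires comparing sums indexed by $2m$ and $np$ via the underlying monotonicity of $L^\alpha$-norms; once this is in place, the appeal to Denjoy--Carleman and Fourier uniqueness is standard.
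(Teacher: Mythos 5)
The paper does not actually prove Theorem 2.1: it is quoted as a known result (Carleman's theorem on moments; the text only points to de Jeu \cite{J} for the $n$-dimensional version), so there is no internal proof to compare against. Your argument supplies the standard self-contained proof, and it is correct: duality via Hahn--Banach to produce an annihilating $g\in L^{p'}(d\mu)$, the observation that $F(x)=\int e^{ixt}g\,d\mu$ is smooth with $|F^{(n)}|\leq B_n$ and $F^{(n)}(0)=0$, transfer of the Carleman condition from $M(2m)$ to $B_n$, and then Denjoy--Carleman plus Fourier uniqueness. Two points deserve to be made explicit if you write this up. First, in the middle step you only need the easy direction of the "simultaneous divergence": after normalising $\mu$ to a probability measure, Lyapunov's inequality makes $\alpha\mapsto A_\alpha^{-1/\alpha}$ (with $A_\alpha=\int|t|^\alpha d\mu$) non-increasing, so $\sum_m A_{2m}^{-1/2m}=\infty$ forces $\int_2^\infty A_\alpha^{-1/\alpha}\,d\alpha=\infty$, hence $\sum_n A_{np}^{-1/(np)}=\infty$, and the factors $\|g\|_{p'}^{-1/n}$ and $\mu(\R)^{\pm 1/n}$ tend to $1$ and are harmless. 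Second, your appeal to Denjoy--Carleman in the form "$\sum B_n^{-1/n}=\infty$ implies quasi-analyticity" is legitimate either because that sufficiency direction holds for arbitrary majorant sequences (pass to the log-convex minorant, which only increases the terms $B_n^{-1/n}$), or more directly because your $B_n=\int|t|^n|g|\,d\mu$ is automatically log-convex by Cauchy--Schwarz, so the classical statement applies verbatim. What your route buys over the paper's citation is a complete elementary proof resting only on the one-dimensional Denjoy--Carleman theorem and Fourier uniqueness for finite measures, which is in the same spirit as the paper's later remark that only the one-dimensional Carleman result is really needed.
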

 
 \begin{rem}  If we  assume that $ \mu $ is even in the sense that $ \int_{-\infty}^\infty \varphi(t) d\mu(t) =\int_{-\infty}^\infty \varphi(-t) d\mu(t) ,$ then even polynomials are dense in the subspace $ L^p_{even}(\R, d\mu).$
 \end{rem}
 The above theorem has an $n$-dimensional version, see de Jeu \cite{J}. However, for many applications  the  one dimensional version suffices.
 We also require the following elementary result about series of positive real numbers proved in \cite[Lemma 3.3 ]{BPR}
 \begin{lem}
 	\label{serieslem}
 	Let $\{a_m\}_m$ be a sequence of positive real numbers such that $\sum_{m=1}^{\infty} a_m=\infty$, then for any positive integer $j$, we have $\sum_{m=1}^{\infty} a_m^{1+\frac{j}{m}}=\infty.$
 \end{lem}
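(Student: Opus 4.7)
The plan is to split the index set according to the size of $a_m$, exploiting the fact that $a_m^{j/m}$ admits a uniform positive lower bound whenever $a_m$ does not decay faster than exponentially. Concretely, I would set $A = \{m \in \Na : a_m \geq e^{-m}\}$ and $B = \Na \setminus A$.

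First I would observe that $B$ contributes only a finite amount to the original series: since $a_m < e^{-m}$ for $m \in B$, we have $\sum_{m \in B} a_m \leq \sum_{m=1}^\infty e^{-m} < \infty$. Combined with the hypothesis $\sum_m a_m = \infty$, this forces $\sum_{m \in A} a_m = \infty$.

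Next, for $m \in A$ I would establish the pointwise bound $a_m^{j/m} \geq e^{-j}$ by a short two-case analysis: if $a_m \geq 1$ the inequality $a_m^{j/m} \geq 1 \geq e^{-j}$ is trivial, while if $e^{-m} \leq a_m < 1$ then the function $x \mapsto x^{j/m}$ is increasing on $(0,\infty)$, so $a_m^{j/m} \geq (e^{-m})^{j/m} = e^{-j}$. Multiplying through by $a_m$ yields $a_m^{1+j/m} \geq e^{-j} a_m$ for every $m \in A$, and hence
\[ \sum_{m=1}^\infty a_m^{1+\frac{j}{m}} \;\geq\; \sum_{m \in A} a_m^{1+\frac{j}{m}} \;\geq\; e^{-j} \sum_{m \in A} a_m \;=\; \infty. \]

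There is no serious obstacle; the only real decision is the choice of threshold, and $e^{-m}$ is natural because it synchronizes the decay rate of $a_m$ with the exponent $j/m$ so as to produce a lower bound $e^{-j}$ that is independent of $m$. Any slower exponential threshold (for example $2^{-m}$) would give the same qualitative conclusion, but a polynomial threshold $m^{-C}$ would only give $a_m^{j/m} \geq m^{-Cj/m} \to 1$ without a clean uniform constant and would make the splitting step less transparent.
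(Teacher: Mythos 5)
Your proof is correct. The paper does not actually prove this lemma --- it cites it from Lemma 3.3 of Bhowmik--Pusti--Ray \cite{BPR} --- and your argument (splitting at the threshold $a_m \geq e^{-m}$ so that $a_m^{j/m} \geq e^{-j}$ uniformly on the divergent part of the series) is precisely the standard proof given there.
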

  Here is a version of Chernoff's  theorem for the operator $ P.$ We assume that the elliptic differential operator $ P $ is such that the associated eigenvalues  $ \lambda_k $ and eigenfunctions $ \psi_k $ satisfy the following conditions: (i) $ \psi_k(0) =1,$ (ii) $ \lambda_k \geq 0,$ and (iii) $ \lambda_k $ and $ c_k $  have polynomial growth in $ k.$ Closely following the work of  Bhowmik-Pusti-Ray \cite{BPR}, we establish the following result.
 
 \begin{thm}\label{chernoff} Let $ f \in L^2(\R^+, w(r)dr) $ be such that $ P^mf \in L^2(\R^+, w(r)dr) $ for all $ m \in \Na $  and satisfies  the Carleman condition 
 $ \sum_{m=1}^\infty  \| P^m f \|_2^{-1/(2m)} = \infty.$ Then $ f $ cannot vanish in a neighbourhood of $0 $ unless it is identically zero.
 \end{thm}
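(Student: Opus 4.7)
My plan is to encode the data carried by $f$ into a moment problem on $\R$ and then appeal to Theorem~\ref{carleman}, following the scheme of Bhowmik--Pusti--Ray \cite{BPR} adapted to the present abstract setting.

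\emph{Step 1 (from vanishing of $f$ to a moment condition).} The hypothesis $P^mf\in L^2(\R^+,w(r)dr)$ for every $m$, together with the polynomial growth of $\la_k$ and $c_k$, allows the series $\sum_k \la_k^m c_k \hat{f}(k)\psi_k(r)$ to be shown to converge uniformly in a neighbourhood of $0$, producing a continuous representative of $P^m f$ there. Because $f\equiv 0$ on $(0,\dl)$, the operator $P^m$ can be applied term by term to the original expansion~(\ref{expan}) on $(0,\dl)$, as indicated in (\ref{expan-1}). Letting $r\to 0^{+}$ and using $\psi_k(0)=1$ then yields the sequence of identities (\ref{exp-2}), namely
\begin{equation*}
\sum_{k=0}^{\infty}\la_k^{m}\,c_k\,\hat{f}(k)=0\qquad\text{for every }m\in\Na.
\end{equation*}

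\emph{Step 2 (measure and sign function).} Form the even finite Borel measure $\mu_f$ defined in the excerpt and introduce $h\in L^{\infty}(\R,d\mu_f)$ by $h(\pm\sqrt{\la_k})=\overline{\hat{f}(k)}/|\hat{f}(k)|$ whenever $\hat{f}(k)\neq 0$ (and $h=0$ otherwise). Then $h$ is even, $|h|=1$ on the support of $\mu_f$, and the conclusion of Step~1 rewrites (after complex conjugation) as $\int_{\R}t^{2m}h(t)\,d\mu_f(t)=0$ for every $m$, while the odd-power integrals vanish by parity. Hence $h$ annihilates every polynomial under the $L^{1}$-duality against $\mu_f$.

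\emph{Step 3 (transfer of the Carleman condition).} By the polynomial growth hypotheses choose $j\in\Na$ so large that $C_j^{2}:=\sum_k c_k\la_k^{-2j}<\infty$. Cauchy--Schwarz then gives the key bound
\begin{equation*}
M(2m)=\sum_{k}c_k\la_k^{m}|\hat{f}(k)|\;\leq\;C_j\,\|P^{m+j}f\|_{2},
\end{equation*}
which in particular (taking $m=0$) makes $\mu_f$ a finite measure. Setting $a_n:=\|P^nf\|_{2}^{-1/(2n)}$ and reindexing the sum,
\begin{equation*}
\sum_{m=1}^{\infty}M(2m)^{-1/(2m)}\;\geq\;c\sum_{n>j}a_n^{\,n/(n-j)}.
\end{equation*}
For $n\geq 2j$ one has $n/(n-j)\leq 1+2j/n$, so $a_n^{\,n/(n-j)}\geq a_n^{\,1+2j/n}$ when $a_n\leq 1$ (and the term is already $\geq 1$ when $a_n\geq 1$). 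Since $\sum_n a_n=\infty$ by hypothesis, Lemma~\ref{serieslem} applied with the positive integer $2j$ gives $\sum_n a_n^{\,1+2j/n}=\infty$, whence $\sum_{m}M(2m)^{-1/(2m)}=\infty$.

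\emph{Step 4 (conclusion).} Theorem~\ref{carleman} now makes polynomials dense in $L^{1}(\R,d\mu_f)$. The bounded function $h$ annihilates all polynomials, so $h=0$ $\mu_f$-a.e.; since $|h|=1$ on $\mathrm{supp}\,\mu_f$, this forces $\mu_f\equiv 0$, hence $\hat{f}(k)=0$ for every $k$ and $f\equiv 0$. The one genuinely delicate step is Step~3: the Cauchy--Schwarz bound costs an index shift of size $j$, and one must show this does not destroy the Carleman condition; Lemma~\ref{serieslem} is exactly the tool that handles this. Steps~1, 2 and~4 are routine once the right measure and sign function are in place.
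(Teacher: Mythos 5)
Your argument is correct and follows the paper's own proof in all essential respects: the same measure $\mu_f$, the same Cauchy--Schwarz bound $M(2m)\leq \sqrt{C_j}\,\|P^{m+j}f\|_2$ with $C_j=\sum_k \lambda_k^{-2j}c_k$, the same use of Lemma~\ref{serieslem} to absorb the index shift $m\mapsto m+j$, and the same appeal to Theorem~\ref{carleman}. The only (harmless) difference is the endgame: the paper approximates the function $\varphi(\pm\sqrt{\lambda_k})=\overline{\hat{f}(k)}$, which lies in $L^1(\R,d\mu_f)$ because $\sum_k c_k|\hat{f}(k)|^2=\|f\|_2^2<\infty$, by an even polynomial and reads off $\|f\|_2^2<\epsilon$ directly, whereas you use the bounded unimodular function $h$ and $L^1$--$L^\infty$ duality to force $\mu_f=0$; both are immediate consequences of the density of polynomials.
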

 \begin{proof} Let $f$ be as in the statement of the theorem and $\mu_f$ be the corresponding  measure defined as above. Let $M(m)$ stand for the $m$ th moment of $\mu_f$. Then for any $m\geq 1$ we have 
 	$$M(2m)=\int_{-\infty}^{\infty}t^{2m}d\mu_{f}(t)=\sum_{k=0}^{\infty}\lambda_k^mc_k |\hat{f}(k)|.$$ First assume that $ \lambda_0 > 0 .$ By writing 
 	 $$M(2m)=\sum_{k=0}^{\infty}\lambda_k^{-j}\lambda_k^{m+j}c_k|\hat{f}(k)|$$
 and  applying Cauchy-Schwarz inequality  we get the estimate
  $$ M(2m)  \leq  \sqrt{C_j}\left(\sum_{k=0}^\infty  \lambda_k^{2(m+j)} \, c_k \, |\hat{f}(k)|^2\right)^{\frac12} =    \sqrt{C_j} \,  \| P^{m+j} f \|_2 $$ 
 where $ C_j = \sum_{k=0}^\infty \lambda_k^{-2j} c_k < \infty $ if $ j $ is large enough under the assumption that $ c_k $ has polynomial growth.  When $ \lambda_0 = 0 $ the same estimate is true with $ C_j = \sum_{k=1}^\infty \lambda_k^{-2j} c_k .$  From the above we obtain
 $$ \sum_{m=1}^\infty  M(2m)^{-1/2m} \geq  \sum_{m=1}^\infty ( \sqrt{C_j})^{-1/2m} \| P^{m+j}f \|_2^{-1/2m} =\sum_{m=1}^\infty ( \sqrt{C_j})^{-1/2m} \left(\| P^{m+j}f \|_2^{-\frac{1}{2(m+j)}}\right)^{\frac{m+j}{m}}. $$ 
 Therefore, by Lemma \ref{serieslem}, the divergence of the series on the right hand follows from the divergence of $\sum_{m=1}^\infty  \| P^{m}f\|_2^{-1/2m}$  which is the hypothesis.  Therefore, the  moments of the measure $ \mu_f $ satisfy the condition in Theorem \ref{carleman}  and hence we conclude that polynomials are dense in $ L^1(\R,d\mu_f).$   Consider the even function $\varphi$ defined on the support of $\mu_f$  by $\varphi(\sqrt{\lambda_k}) = \varphi(-\sqrt{\lambda_k}) = \overline{\hat{f}(k)}.$ We observe that $$\int_{-\infty}^{\infty}|\varphi(t)|d\mu_{f}(t)= \sum_{k=0}^{\infty}c_k|\hat{f}(k)|^2<\infty$$ which proves that $\varphi \in L^1(\mathbb{R},d\mu_f).$  As $ \varphi $ is even, for  any $ \epsilon > 0 $ we can choose an even polynomial $ q $ such that  $\|\varphi-q\|_{L^1(\mathbb{R},d\mu_{f})}<\epsilon$  which translates into
 $$ \sum_{k=0}^\infty |\overline{\hat{f}(k)}-q(\sqrt{\lambda_k})| \, c_k \,  |\hat{f}(k)| < \epsilon.$$ Now if we assume that $f$ vanishes in a neighbourhood of zero, then 
 as $ q $ is even, (\ref{exp-2}) allows us to conclude that $  \sum_{k=0}^\infty  q(\sqrt{\lambda_k}) \, c_k\,  \hat{f}(k) \, = 0.$ 
 By writing 
 $$ |\hat{f}(k)|^2 = \big(  \overline{\hat{f}(k)}- q(\sqrt{\lambda_k}) \big) \hat{f}(k) + q(\sqrt{\lambda_k}) \hat{f}(k)$$ and  making use of  the above observations, we conclude that
 $  \|f\|_2^2  = \sum_{k=0}^\infty c_k\,  |\hat{f}(k)|^2  < \epsilon .$
As this is true for every  $ \epsilon $ it follows that $ f  = 0 $ proving the theorem.
 \end{proof}
\begin{rem}
	 It is easy to see from the above proof that  Theorem \ref{chernoff} still holds true  under the weaker assumption that $ \displaystyle\lim_{r \rightarrow 0} P^mf(r) = 0 $ for all $ m\geq 0.$
\end{rem}
 
 In the following sections we discuss several examples of operators $ P $  for which the above theorem applies. As applications we also prove analogues of  Chernoff and Ingham  theorems in various settings.

\section{Jacobi polynomials and Compact symmetric spaces}

Let $ (G,K) $ be a compact symmetric space and let $ X = G/K $ be the associated symmetric space.  We assume that $ X $ has rank one. Let  $ G = KAK $ be  a Cartan decomposition of $ G $ where $ A $ is identified with $ (0, R)$  for some $ R>0.$ The elements of $ A $ will be denoted by $ a_r, r>0.$  Let  $ dk $ be the normalised Haar measure on $ K.$ Then the  Haar measure on $ G $ has the  decomposition  $ dg = w(r) dk dr dk^\prime $ for some positive weight function $ w $ on $A.$ Given $ f \in L^1(G) $ we define the spherical means of $ f $ by
\begin{equation}\label{spher-mean}  f(g,r) = \int_K \int_K f(gka_rk^\prime) dk\, dk^\prime.\end{equation}
Observe that $ f(g,r) $ is a right $ K$-invariant function of $ g \in G $ and hence we can consider it as a function on the symmetric space $ X.$ The spherical means $ f(g,r) $ can be realised  as $ f \ast \nu_r $ where $ \nu_r $ is a compactly supported $K$-biinvariant probability measure on $ G.$ We refer to the paper of Pati-Shahshahani-Sitaram \cite{PSS} for more about the spherical means.\\

Using Peter-Weyl theorem we can expand $ f(g,r) $ in terms of zonal spherical functions. Recall that for right $K$-invariant functions $f $ on $ G $ the Peter-Weyl theorem reads as
\begin{equation}\label{peter-weyl}  f(g)  = \sum_{\lambda \in \widehat{G}_K}     d_\lambda \, \, f \ast \varphi_\lambda(g)  \end{equation}
where $ \widehat{G}_K$ is the subset of the unitary dual $ \widehat{G} $ consisting of equivalence classes of class-1 representations of $ G $ and for each $ \lambda \in \widehat{G}_K ,  \varphi_\lambda $ is the associated zonal spherical function and $ d_\lambda $ is the dimension of the space on which  $ \lambda $ is realised. As the spherical functions $ \varphi_\lambda $ satisfy the identity
$$  \int_K \varphi(gkh) \, dk  = \varphi_\lambda(g) \, \varphi_\lambda(h),$$
from  (\ref{peter-weyl}) we can easily prove the following expansion for the spherical means:
\begin{equation}\label{spher-mean-exp}
f(g,r )  = \sum_{\lambda \in \widehat{G}_K}     d_\lambda \, \, f \ast \varphi_\lambda(g) \,\, \varphi_\lambda(r).
\end{equation}

In the above we have written $ \varphi_\lambda(r) $ in place of $ \varphi_\lambda(a_r).$  These spherical functions are known explicitly. They are expressible in terms of Jacobi polynomials (See Helgason \cite{H}). In fact, $\varphi_\lambda(a_r)=P^{(\alpha,\beta)}_\lambda(r)$ where $P^{(\alpha,\beta)}_\lambda(r)$ are Jacobi polynomials with parameters $(\alpha,\beta)$ associated to the symmetric space $G/K$.  We recall some more properties of the spherical functions $ \varphi_\lambda.$ They are normalised so that $ \varphi_\lambda(0) = \varphi_\lambda(e) = 1 $ and  $ \{  d_\lambda^{1/2} \,
\varphi_\lambda(r), \, \lambda \in \widehat{G}_K  \} $ forms an orthonormal basis for $ L^2(A, w(r) dr).$ 
The spherical functions $ \varphi_\lambda $ are eigenfunctions of the Laplace-Beltrami operator $ \Delta $ on $ X $ with certain eigenvalues, say $ c_\lambda.$  Let $  \Delta_0 $ be the radial part of $ \Delta $ so that $  \Delta_0 \varphi_\lambda(r) = c_\lambda  \, \varphi_\lambda(r).$  Thus we see that  (\ref{spher-mean-exp}) is an expansion in terms of the functions $ \varphi_\lambda(r), \lambda \in \widehat{G}_K $ and we are in a position to apply Theorem \ref{chernoff} to prove a version of Chernoff's theorem for compact symmetric spaces.\\

In order to proceed further we need some information regarding the eigenvalues $ c_\lambda $ associated to $ \varphi_\lambda.$  Though one can describe the eigenvalues $ c_\lambda$ in terms of representation theory of $ G,$ for our purposes it is not necessary.  All compact symmetric spaces of rank one have been classified and in each case the zonal spherical functions $ \varphi_\lambda $ and the eigenvalues  $ c_\lambda $ are explicitly known.  The  spheres $ S^d \subset \R^{d+1}, d\geq 1$, the real projective spaces $ P_d(\R), 
d \geq 2,$ the complex projective spaces $ P_l, l\geq 2,$ the quaternionic projective spaces $ P_l(\mathbb{H}), l\geq 2 $ and the Cayley plane $ P_2(\text{Cay})$, see \cite{W}. In all these cases $ \widehat{G}_K $ is parameterized by non-negative integers (or even integers) and the spherical functions are given by Jacobi polynomials. For the spheres $ S^d $ the eigenvalues are given by $ c_n = n(n+d-1) $ and for $ P_d(\R) $ one has $ c_{2n} = 2n(2n+d-1).$ For the other projective spaces one has $ c_n = n(n+m+d) $ where $ d =2,4,8 $ and $ m = l-2, 2l-3, 3 $ for $ P_l(\C), P_l(\mathbb{H}) $ and $ P_2(\text{Cay}) $ respectively. For these information we refer to \cite{S} ( see also \cite{CRS}).\\

%It is customary to add a suitable constant to $ \rho_X $ and consider  $ \Delta = \widetilde{\Delta}+\rho_X .$ It is clear that by a suitable choice of $ \rho_X $ we can arrange that the eigenvalues of $ \Delta $ are squares and $ \Delta \geq \rho_X $ and all the hypothesis of Theorem \ref{chernoff} are satisfied. To be more precise, we can take $\rho_{S^d}$ or $\rho_{P_d(\mathbb{R})}:=\frac14(d-1)^2$ and $\rho_X=\frac14(m+d)^2$ where $ d =2,4,8 $ and $ m = l-2, 2l-3, 3 $ for $ X=P_l(\C), P_l(\mathbb{H}) $ and $ P_2(\text{Cay}) $ respectively.
 We now state and prove a version of Chernoff's theorem for $ \Delta$ on $ X.$
\begin{thm}\label{cher-sym} Let $ f \in C^{\infty}(G/K) $ be such that $ \Delta^mf \in L^2(G/K) $ for all $ m \geq 0 $  and satisfies the  Carleman condition 
 $ \sum_{m=1}^\infty  \| \Delta^m f \|_2^{-1/(2m)} = \infty.$ Then $ f $ cannot vanish on any open set unless it is identically zero.
\end{thm}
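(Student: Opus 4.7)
The plan is to reduce the problem on $X = G/K$ to the one-dimensional radial situation on $A$ and then invoke Theorem~\ref{chernoff}. Suppose, toward a contradiction, that $f$ vanishes on a nonempty open set $V \subset X$, and fix any $g_0 \in V$. By continuity of the map $(k,k',r) \mapsto g_0 k a_r k'$ and compactness of $K \times K$, there exists $\delta > 0$ such that $g_0 k a_r k'$ lies in the preimage of $V$ for every $k, k' \in K$ and every $r \in [0,\delta)$. Consequently the spherical mean $F(r) := f(g_0,r)$ defined by (\ref{spher-mean}) vanishes on $[0,\delta)$.

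Next I would verify that $F$ satisfies the hypotheses of Theorem~\ref{chernoff} with $P = \Delta_0$ and $\psi_k = \varphi_\lambda$. The essential observation is that the radial part $\Delta_0$ commutes with the spherical mean operation, so that
$$\Delta_0^m F(r) = (\Delta^m f)(g_0, r), \qquad m \geq 0.$$
Combining Cauchy--Schwarz on the $K$-average with the Haar decomposition $dg = w(r)\, dk\, dr\, dk'$ and the left-invariance of Haar measure, one obtains the crude but sufficient bound
$$\|\Delta_0^m F\|_{L^2(A, w\,dr)} \;\leq\; \|\Delta^m f\|_{L^2(X)}.$$
Hence $\Delta_0^m F \in L^2(A, w\,dr)$ for every $m$, and
$$\sum_{m=1}^\infty \|\Delta_0^m F\|_2^{-1/(2m)} \;\geq\; \sum_{m=1}^\infty \|\Delta^m f\|_2^{-1/(2m)} \;=\; \infty,$$
so the Carleman hypothesis transfers to $F$. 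Since, as recalled in the excerpt, the eigenvalues $c_\lambda$ and the dimensions $d_\lambda$ on any rank-one compact symmetric space grow polynomially in the parameter, Theorem~\ref{chernoff} applies and forces $F \equiv 0$ on $A$.

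Expanding $F$ via (\ref{spher-mean-exp}),
$$ 0 \;=\; F(r) \;=\; \sum_{\lambda \in \widehat{G}_K} d_\lambda\, (f * \varphi_\lambda)(g_0)\, \varphi_\lambda(r),$$
and using that $\{ d_\lambda^{1/2} \varphi_\lambda \}$ is an orthonormal basis of $L^2(A, w\,dr)$, I deduce $(f * \varphi_\lambda)(g_0) = 0$ for every $\lambda \in \widehat{G}_K$. Since $g_0$ was arbitrary in $V$, each spectral projection $f * \varphi_\lambda$ vanishes on $V$. Each such projection is an eigenfunction of the elliptic operator $\Delta$ on the compact real analytic manifold $X$, hence is itself real analytic, so its vanishing on the open set $V$ forces $f * \varphi_\lambda \equiv 0$ on $X$. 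The Peter--Weyl expansion (\ref{peter-weyl}) then yields $f \equiv 0$, contradicting the standing assumption.

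The main technical step is the commutation identity $\Delta_0^m F(r) = (\Delta^m f)(g_0, r)$ together with the accompanying $L^2$ transfer bound; once these are in hand the rest is a clean application of Theorem~\ref{chernoff} followed by an appeal to real analyticity of joint eigenfunctions on the compact symmetric space.
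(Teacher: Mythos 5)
Your proof is correct and follows essentially the same route as the paper: reduce to the radial Chernoff theorem via spherical means, transfer the Carleman condition through the bound $\|\Delta_0^m f(g_0,\cdot)\|_{L^2(A,w)} \leq \|\Delta^m f\|_2$, and finish with real analyticity of the eigenfunctions $f \ast \varphi_\lambda$. The only (immaterial) difference is that you obtain the transfer bound by Cauchy--Schwarz on the $K\times K$ average plus the Cartan integration formula, whereas the paper derives it from Parseval together with the idempotency $\varphi_\lambda \ast \varphi_\lambda = \varphi_\lambda$ and $\|\varphi_\lambda\|_2^2 = d_\lambda^{-1}$.
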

\begin{proof} From (\ref{spher-mean-exp}) we see that for any $ m \in \Na,$ 
\begin{equation}\label{spher-mean-norm} \int_0^R  | \Delta_0^m f(g,r)|^2  \, w(r)\, dr = \sum_{\lambda \in \widehat{G}_K}    c_\lambda^{2m} \, d_\lambda \,  | f \ast \varphi_\lambda(g)|^2 . \end{equation}
Using the identity  $ \varphi_\lambda \ast \varphi_\lambda = \varphi_\lambda ,$ satisfied by the spherical functions, we estimate
$$  d_\lambda\, | f \ast \varphi_\lambda(g)|^2  \leq   d_\lambda\,  \| f\ast \varphi_\lambda \|_2^2 \| \varphi_\lambda\|_2^2 = \| f\ast \varphi_\lambda \|_2^2 $$
and consequently (\ref{spher-mean-norm}) gives the following relation between norms of $ \Delta_0^mf(g,\cdot) $ and $ \Delta^m f(\cdot)$:
\begin{equation}\label{norm-rel} \int_0^R  | \Delta_0^m f(g,r)|^2  \, w(r)\, dr \leq  \sum_{\lambda \in \widehat{G}_K}    c_\lambda^{2m} \, d_\lambda \,  \| f \ast \varphi_\lambda\|_2^2 
= \int_G |\Delta^mf(g)|^2 dg \end{equation}
valid for all  functions on the symmetric space $ X.$ This allows us to conclude that  the sequence $ \| \Delta_0^m f(g,\cdot)\|_{L^2(A,w)} $ satisfies the Carleman condition whenever $ \| \Delta^m f\|_2 $ satisfies the same.\\

Now, suppose $ f $ vanishes on an open set $ V \subset G.$ Then for any $ g \in V $, it follows  from (\ref{spher-mean}) that $ f(g,r) $ vanishes on an interval $ (0,\delta) $ as a function of $ r.$ By Theorem \ref{chernoff} we conclude that $ f(g,r) = 0 $ for all $ r>0 $ and hence by calculating the $ \varphi_\lambda $ coefficients of $ f(g,\cdot) $  from (\ref{spher-mean-exp}) we obtain $ f \ast \varphi_\lambda(g) = 0 $ for all $ g \in V.$ The real analyticity of $ f \ast \varphi_\lambda $ forces it to be identically zero. As this is true for any $ \lambda \in \widehat{G}_K $ we conclude that $ f = 0 $ proving the theorem.
\end{proof}

\begin{rem} An exact analogue of Theorem \ref{cher-sym} for non-compact Riemannian symmetric spaces have been proved in Bhowmik-Pusti-Ray using the higher dimensional version of Theorem \ref{chernoff} due to de Jeu \cite{J}. In fact, we are inspired by the proof given in \cite{BPR}.
\end{rem}
%\begin{rem}
%	The conclusion of Theorem \ref{cher-sym} still holds true if we replace $\Delta$ by the Laplace-Beltrami operator $\widetilde{\Delta}$. As mentioned in the introduction, the modified Laplace-Beltrami operator $\Delta$ has been considered to enhance the beauty of the calculations involved.  
%\end{rem}
A close examination of the proof of Theorem \ref{cher-sym} shows that   the sequence $ \| \Delta_0^m f(g,\cdot) \|_{L^2(A,w)}^{-1/2m} $ satisfies the Carleman condition under the weaker assumption that 
$$ \sum_{m=1}^\infty \Big( \sum_{\lambda \in \widehat{G}_K}    c_\lambda^{2m} \, d_\lambda \,  | f \ast \varphi_\lambda(g)|^2 \Big)^{-1/4m} = \infty $$ 
for all $ g $ on an open subset of $ G.$ The above condition is guaranteed if we assume that the spectral projections $ f \ast \varphi_\lambda(g) $ have enough decay as a function of the eigenvalues $ c_\lambda.$ For example, as shown in \cite{BPR},  if we assume that $ |f \ast \varphi_\lambda(g)| \leq C_g e^{-t_g \sqrt{c_\lambda}} $ for some $ t_g> 0 $ then 
$ \| \Delta_0^m f(g,r)\|_{L^2(A,w)}^{-1/2m} $ will satisfy the Carleman condition. It follows that such a function cannot vanish on a nonempty open set unless it is identically zero.  Actually, as we show in the next proposition, the same conclusion can be drawn by assuming a weaker decay on the spectral projections $ f \ast \varphi_\lambda(g).$  A continuous version of the following proposition  is  implicitly available in the work of  Ingham \cite[Page no.31]{I}.  We present a complete proof for the convenience of the reader closely following \cite{I} and making necessary modifications.

\begin{prop}\label{prop-carl}  Let $ \theta $ be a positive decreasing function on $ [0, \infty) $  vanishing at infinity that satisfies  $\theta(t)\geq c(1+t)^{-1/2}$ for $t\geq 1$ and $ \int_1^\infty \theta(t) t^{-1} dt = \infty.$ Then for any $ k \in \mathbb{N} $ and $ \rho >0$, the sequence $ a_m $ defined by
$$  a_m^2  = \sum_{n=0}^\infty  (n+\rho)^{4m+k} \, e^{-2(n+\rho)\,\theta(n+\rho)} $$ satisfies the Carleman condition $ \sum_{m=1}^\infty  a_m^{-\frac{1}{2m}} = \infty.$
\end{prop}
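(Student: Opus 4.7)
The plan is a saddle-point analysis of the summand $F_m(t) := t^{4m+k}e^{-2t\theta(t)}$, together with an integral comparison between $\sum_m 1/t_m$ and $\int_{1}^{\infty}\theta(t)\,t^{-1}\,dt$. The critical equation for $F_m$ places the maximum near the large root $t_m$ of $t\theta(t) = 2m$; since the hypothesis $\theta(t)\geq c(1+t)^{-1/2}$ forces $t\theta(t)\to\infty$, I take $t_m := \inf\{t\geq 1 : t\theta(t)\geq 2m\}$, which satisfies $t_m\theta(t_m) = 2m$ (up to a continuity gap) and $t_m \leq 4m^2/c^2$.

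The key step is the Laplace-type upper bound
\[ a_m^2 \;\leq\; P(m)\,t_m^{4m+k}\,e^{-4m}\qquad\text{with } P \text{ of polynomial growth in } m. \]
I would split at $n+\rho = t_m$. On the head, the monotonicity of $\theta$ gives $\theta(n+\rho)\geq \theta(t_m)= 2m/t_m$, so that
\[ \sum_{n+\rho\leq t_m}(n+\rho)^{4m+k}e^{-2(n+\rho)\theta(n+\rho)}\;\leq\;\sum_{n=0}^\infty(n+\rho)^{4m+k}e^{-(4m/t_m)(n+\rho)}, \]
and the right side is bounded via the Gamma integral by $(4m+k)!\,(t_m/4m)^{4m+k+1}$, which Stirling's formula reduces to a polynomial-in-$m$ multiple of $t_m^{4m+k}e^{-4m}$. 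On the tail, rescale $t = \alpha t_m$ and use the elementary comparison $\alpha^{4m+k}e^{-4m(\alpha-1)}\leq C_k\,e^{-cm(\alpha-1)^2}$ for $\alpha$ in a neighbourhood of $1$, combined with $\theta(t)\geq c/\sqrt{t}$ (which forces $t\theta(t)\geq 2m$ for $t\geq T_m:=4m^2/c^2$); these two ingredients together dominate the tail contribution by the saddle value up to polynomial-in-$m$ factors.

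Taking $(2m)$-th roots and using $t_m = O(m^2)$ (so that $P(m)^{1/(4m)}$ and $t_m^{k/(4m)}$ both tend to $1$), one obtains $a_m^{-1/(2m)}\geq c'\,t_m^{-1}$ for all sufficiently large $m$, and it remains to prove $\sum_m 1/t_m = \infty$. Since $t_m$ is (essentially) the inverse of the eventually increasing $\phi(t) := t\theta(t)/2$, a change of variables gives
\[ \sum_m \frac{1}{t_m}\;\gtrsim\;\int_{T_0}^\infty\frac{\phi'(t)}{t}\,dt \;=\;\frac{1}{2}\int_{T_0}^\infty\frac{\theta(t)}{t}\,dt\;+\;\frac{1}{2}\int_{T_0}^\infty\theta'(t)\,dt. \]
The first integral is $+\infty$ by hypothesis; the second equals $-\theta(T_0)/2$ and is finite, since $\theta$ is bounded and monotone. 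Hence $\sum 1/t_m = \infty$ and the Carleman condition follows.

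The main technical obstacle is the tail portion of the Laplace bound: because $\theta$ is only required to be decreasing (not slowly varying), $t\theta(t)$ could dip below $2m$ just past $t_m$. The hypothesis $\theta(t) \geq c(1+t)^{-1/2}$ is precisely the safety net that confines this \emph{bad band} to $[t_m, T_m]$ with $T_m = 4m^2/c^2$, and the Gaussian-width analysis around the saddle is what absorbs the transitional contribution into the polynomial factor $P(m)$. A secondary, milder issue is the validity of the change of variables in the final step when $\theta$ is merely monotone (not differentiable); this is handled by approximating $\theta$ from below by a smooth decreasing function sharing the divergence property.
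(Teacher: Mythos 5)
Your head estimate (the sum over $n+\rho\le t_m$) and your final step ($\sum_m 1/t_m=\infty$ via comparison with $\int \theta(t)t^{-1}dt$) are fine, but the tail of the Laplace bound contains a genuine gap: the key inequality $a_m^2\le P(m)\,t_m^{4m+k}e^{-4m}$ is \emph{false} with $t_m$ defined as the first crossing of $t\theta(t)=2m$. For $t>t_m$ monotonicity gives $\theta(t)\le\theta(t_m)$, hence $2t\theta(t)\le 4m\,t/t_m$; so the quantity $\alpha^{4m+k}e^{-4m(\alpha-1)}$ that you compare with a Gaussian is a \emph{lower} bound on $F_m(\alpha t_m)/F_m(t_m)$, where $F_m(t)=t^{4m+k}e^{-2t\theta(t)}$, and gives no control from above. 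The only genuine upper bound available on the band $(t_m,T_m]$ is $e^{-2t\theta(t)}\le e^{-2c\sqrt{t}/\sqrt2}$, which is far too weak when $t_m\ll m^2$. Concretely, let $\theta\equiv 2^{-j}$ on $[s_j,s_{j+1})$ with $\log s_j=2^j$ (and $\theta\equiv 1$ on $[0,s_0)$): this is decreasing, vanishes at infinity, satisfies $\theta(t)\ge c(1+t)^{-1/2}$ and $\int_1^\infty\theta(t)t^{-1}dt=\sum_j 2^{-j}(2^{j+1}-2^j)=\infty$. For every $m$ with $2m\in\bigl(s_{j+1}2^{-j-1},\,s_{j+1}2^{-j}\bigr)$ one has $t_m=2m\,2^{j}\le s_{j+1}$, but $t\theta(t)$ drops below $2m$ at $s_{j+1}$ and only returns to $2m$ at $t=2t_m$, where the single summand contributes $F_m(2t_m)=(2t_m)^{4m+k}e^{-4m}=2^{4m+k}F_m(t_m)$ — already exceeding $P(m)t_m^{4m+k}e^{-4m}$ for any polynomial $P$. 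With steeper staircases the overshoot of the true maximiser over $t_m$ is unbounded, so this cannot be repaired by a constant rescaling of $t_m$. A second, independent problem sits in the far tail $t\ge T_m\approx m^2$: there the guaranteed decay $e^{-2c\sqrt t}$ only contributes $e^{-c'm}$ at the cutoff, which does not beat the factor $\Gamma(8m+2k+2)\approx (8m/e)^{8m}$ produced by the Gamma integral, so that piece too fails to be dominated by $P(m)t_m^{4m+k}e^{-4m}$ whenever $t_m=o(m^2)$ (e.g. $\theta(t)=1/\log t$, $t_m\sim 2m\log(2m)$).

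The paper sidesteps both difficulties by splitting at the $\theta$-independent point $n=m^4$ instead of at the saddle. On the head, monotonicity alone gives $\theta(n+\rho)\ge\theta(m^4+\rho)$ and the same Gamma-integral computation you use yields $I_1\le (Cm/\theta(2m^4))^{4m}$; on the tail, $\theta(t)\ge c(1+t)^{-1/2}$ gives $e^{-2t\theta(t)}\le e^{-c(m^4+\rho)^{1/2}}e^{-c\sqrt t}$, and the prefactor $e^{-cm^2}$ crushes $\Gamma(8m+2k+2)$, so $I_2\to 0$. The resulting bound $a_m^{-1/2m}\gtrsim \theta(2m^4)/m$ is weaker than your target $\gtrsim 1/t_m$, but its sum still diverges, since $\int_1^\infty\theta(t)t^{-1}dt=\infty$ implies $\sum_m\theta(2m^4)/m=\infty$ by the substitution $t=2x^4$. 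If you replace your saddle point $t_m$ by the cutoff $m^4$ (any exponent strictly larger than $2$ works), your argument becomes correct and is then essentially the paper's proof; as written, the saddle-point version does not go through.
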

\begin{proof}  For any $m\in \mathbb{N}$ we write $a_m^2 = I_1 + I_2 $ where   
 \begin{equation}
	I_1 = \sum_{n=0}^{m^4}(n+\rho)^{4m+k}e^{-2(n+\rho)\theta(n+\rho)},\,\,\,  I_2 = \sum_{n=m^4+1}^{\infty}(n+\rho)^{4m+k}e^{-2(n+\rho)\theta(n+\rho)} 
\end{equation}
and  estimate each sum separately. 
When $n\leq m^4$, we have $\theta(n+\rho)\geq \theta(m^4+\rho)$ which yields
$$\sum_{n=0}^{m^4}(n+\rho)^{4m+k}e^{-2(n+\rho)\theta(n+\rho)}\leq (m^4+\rho)^{k+1}e^{2\theta(m^4+\rho)}\sum_{n=0}^{m^4}(n+\rho)^{4m-1}e^{-2(n+1+\rho)\theta(m^4+\rho)}.$$  Now note that the sum on the right hand side of the above equation is dominated by
$$  \sum_{n=0}^{m^4}\int_{n}^{n+1}(x+\rho)^{4m-1}e^{-2(x+\rho)\theta(m^4+\rho)}dx  \leq  \int_{0}^{\infty}(x+\rho)^{4m-1}e^{-2(x+\rho)\theta(m^4+\rho)}dx.
$$ Now  the change of variables $y=2(x+\rho)\theta(m^4+\rho)$  transforms the above integral into  
\begin{align*}
	 \left(\frac{1}{2\theta(m^4+\rho)}\right)^{4m}\int_{0}^{\infty}y^{4m-1}e^{-y}dy=\left(\frac{1}{2\theta(m^4+\rho)}\right)^{4m} \Gamma(4m) .
\end{align*} Therefore, we have the estimate
\begin{align*}
	I_1\leq C (m^4+\rho)^{k+1}e^{2\theta(\rho)}\left(\frac{1}{2\theta(m^4+\rho)}\right)^{4m} \Gamma(4m)
\end{align*} where we have used the fact that $e^{\theta(m^4+\rho)}\leq e^{\theta(\rho)}$ which is immediate since $\theta$ is decreasing.  By making use of the Stirling formula $\Gamma(x)=\sqrt{2\pi}x^{x-1/2}e^{-x}e^{\alpha(x)/12}, \ 0<\alpha(x)<1,$ valid for $x>0,$  we  conclude that for large $m$ $$I_1\leq  \left(C\frac{4m}{\theta(2m^4)}\right)^{4m}$$ where $C$ is a constant independent of $m$.
In order to estimate $I_2$, we use the assumption on $\theta$  viz. $\theta(t)\geq c(1+t)^{-1/2}\geq 2ct^{-1/2}$ for $t\geq 1.$ Using this and following the same procedure as above, we have 
\begin{align*}
	I_2& \leq\sum_{n=m^4+1}^{\infty} \int_{n}^{n+1}(x+\rho)^{4m+k}e^{-2c(x-1+\rho)^{\frac12}}dx
	\leq \int_{m^4+\rho}^{\infty} (x+1)^{4m+k}e^{-2cx^{1/2}}dx
\end{align*}
For  $x>m^4+\rho>1$, $x+1\leq 2x,$ and hence the last integral  is dominated by 
$$e^{-c(m^4+\rho)^{\frac12}}\int_{m^2+\rho}^{\infty}x^{4m+k}e^{-cx^{1/2}}dx  \leq  2 e^{-c(m^4+\rho)^{\frac12}} \int_{0}^{\infty}y^{8m+2k+1}e^{-cy}dy. $$
Evaluating the Gamma integral, we obtain the estimate
$$I_2\leq  2\, c^{-(8m+2k+2)} \Gamma(8m+2k+2)e^{-c(m^4+\rho)^{\frac12}}\leq  2\, c^{-(8m+2k+2)} \, \Gamma(8m+2k+2) e^{-cm^2}. $$ Now using the Stirling formula mentioned above, we see that  for large $m$ there exists a constant $C$ such that   
$$   c^{-(8m+2k+2)}  \Gamma(8m+4k+2)e^{-cm^2}\leq    C \,(c^{-1} m e^{-cm/8} )^{8m}.$$
 But the right hand side of above goes to zero as $m\rightarrow \infty.$ Hence   for large values of $m$ we have proved the estimate
$$	a_m^2\leq  \left(\frac{4Cm}{\theta(2m^4)}\right)^{4m}.$$  Finally, as  $\int_{1}^{\infty}\theta(t)t^{-1}dt=\infty$ implies $\sum_{m=1}^{\infty}\frac{\theta(2m^4)}{m}=\infty$, from the above estimate we see that  $\sum_{m=1}^{\infty} a_m^{-\frac1{2m}}=\infty$ proving the proposition.
\end{proof}
The following  result, which is in the spirit of the classical Ingham's theorem, gives the admissible decay of  the spectral projections  of a function $ f $ that vanishes on an open set. 

\begin{thm}\label{ing-sym} Let $ \theta $ be a positive decreasing function on $ [0, \infty) $  vanishing at infinity that satisfies $ \int_1^\infty \theta(t) t^{-1} dt = \infty.$ Then for any function $f\in L^2(G)$ vanishing on a nonempty open set $ V \subset G $ the estimates
$$  |f \ast \varphi_\lambda(g)| \leq C_g e^{- \sqrt{c_\lambda} \, \theta(\sqrt{c_\lambda})},\,\, $$
cannot hold  for all $ g \in V $  unless the function vanishes identically.
\end{thm}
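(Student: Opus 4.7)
The plan is to apply the one-variable Chernoff theorem (Theorem \ref{chernoff}) to the radial spherical mean of $f$ and then to recover the vanishing of $f$ itself via Peter-Weyl. Fix $g\in V$. Since $f$ vanishes on the open set $V\ni g$, the spherical mean $F_g(r):=f(g,r)$ from (\ref{spher-mean}) vanishes on some interval $(0,\delta_g)$. By (\ref{spher-mean-exp}),
\begin{equation*}
F_g(r) \;=\; \sum_{\lambda\in\widehat{G}_K} d_\lambda\,(f\ast\varphi_\lambda)(g)\,\varphi_\lambda(r),
\end{equation*}
which is the expansion of $F_g$ in the orthogonal system $\{d_\lambda^{1/2}\varphi_\lambda\}$ of $L^2(A,w(r)dr)$; its members are eigenfunctions of the radial Laplacian $\Delta_0$ with eigenvalues $c_\lambda$.

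The heart of the proof is to verify the Carleman condition $\sum_m\|\Delta_0^m F_g\|_{L^2(A,w)}^{-1/(2m)}=\infty$ using the assumed pointwise decay. Plancherel gives
\begin{equation*}
\|\Delta_0^m F_g\|_{L^2(A,w)}^2 \;=\; \sum_{\lambda\in\widehat{G}_K} c_\lambda^{2m}\,d_\lambda\,|(f\ast\varphi_\lambda)(g)|^2 \;\leq\; C_g^2\sum_\lambda c_\lambda^{2m}\,d_\lambda\,e^{-2\sqrt{c_\lambda}\theta(\sqrt{c_\lambda})}.
\end{equation*}
From the classification of rank one compact symmetric spaces recalled in Section 3, $\widehat{G}_K$ is indexed by $n\in\Na$ with $\sqrt{c_n}=n+\rho+O(1/n)$ for a fixed $\rho$ and $d_n$ of polynomial growth of some degree $k$. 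Up to harmless constants the right-hand side is therefore dominated by the sequence $a_m^2=\sum_{n=0}^\infty(n+\rho)^{4m+k}e^{-2(n+\rho)\theta(n+\rho)}$ of Proposition \ref{prop-carl}, and that proposition yields $\sum_m a_m^{-1/(2m)}=\infty$, whence the required Carleman condition for $\|\Delta_0^m F_g\|_{L^2(A,w)}$.

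Theorem \ref{chernoff} applied to $F_g$ on $L^2(\R^+,w(r)dr)$ then forces $F_g\equiv 0$. Uniqueness of the expansion (\ref{spher-mean-exp}) (equivalently, reading off its $\varphi_\lambda$-coefficients) gives $(f\ast\varphi_\lambda)(g)=0$ for every $\lambda\in\widehat{G}_K$ and every $g\in V$. Since each $f\ast\varphi_\lambda$ is a smooth eigenfunction of the analytic elliptic operator $\Delta$, it is real-analytic on $G$, so its vanishing on the nonempty open set $V$ extends to the whole group. The Peter-Weyl expansion (\ref{peter-weyl}) then yields $f\equiv 0$.

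The main obstacle is the invocation of Proposition \ref{prop-carl}, whose statement contains the auxiliary lower bound $\theta(t)\geq c(1+t)^{-1/2}$ not assumed here. One must justify this restriction: since the proof of the proposition only uses $\theta$ at arguments tending to infinity with $m$, it suffices that the minorization hold eventually, and this should be arranged from $\int_1^\infty\theta(t)t^{-1}dt=\infty$ together with monotonicity of $\theta$, by replacing $\theta$ on a bounded range with a decreasing modification if necessary. Apart from this technical reduction, the rest of the argument is a routine combination of the spherical-mean decomposition, Plancherel, Theorem \ref{chernoff}, and real-analyticity of $\Delta$-eigenfunctions.
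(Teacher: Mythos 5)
Your main line of attack (spherical means, the Plancherel identity for $\|\Delta_0^m F_g\|_{L^2(A,w)}$, domination by the sequence of Proposition \ref{prop-carl}, Theorem \ref{chernoff}, and real analyticity of the eigenfunctions $f\ast\varphi_\lambda$) is exactly the paper's argument, and that part is sound. The genuine gap is in your last paragraph, where you claim the auxiliary hypothesis $\theta(t)\geq c(1+t)^{-1/2}$ of Proposition \ref{prop-carl} can be ``arranged'' from $\int_1^\infty\theta(t)t^{-1}dt=\infty$ together with monotonicity, at least eventually. This is false. Take $T_1=1$ and $T_{n+1}=T_ne^{T_n}$, and set $\theta(t)=T_n^{-1}$ on $[T_n,T_{n+1})$ (and $\theta=1$ on $[0,T_1)$). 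Then $\theta$ is positive, decreasing, vanishes at infinity, and $\int_{T_n}^{T_{n+1}}\theta(t)t^{-1}dt=T_n^{-1}(\log T_{n+1}-\log T_n)=1$, so the integral diverges; yet $\theta(T_n)\sqrt{T_n}=T_n^{-1/2}\to 0$, so no inequality $\theta(t)\geq c(1+t)^{-1/2}$ holds for all large $t$, for any $c>0$. Nor does modifying $\theta$ help: the failure is at infinity, not on a bounded range, and enlarging $\theta$ (e.g.\ to $\max\{\theta(t),c(1+t)^{-1/2}\}$) strengthens the decay hypothesis on $f\ast\varphi_\lambda$ beyond what the theorem assumes.

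The paper closes this gap by a genuinely different device, which you need in some form. Using the Paley--Wiener theorem for the Jacobi transform, Section 6 (see Remark \ref{rem-ing}) produces a family of compactly supported $K$-invariant functions $h_\delta$ with $|\hat{h}_\delta(\lambda)|\leq Ce^{-\sqrt{c_\lambda}\,\theta_\delta(\sqrt{c_\lambda})}$, where $\theta_\delta(t)\geq a_\delta(1+t)^{-1/2}$ and $\hat{h}_\delta(\lambda)\to 1$ as $\delta\to 0$. Since $f\ast h_\delta\ast\varphi_\lambda(g)=\hat{h}_\delta(\lambda)\,f\ast\varphi_\lambda(g)$, the function $f\ast h_\delta$ vanishes on a smaller open set and its projections decay with $\theta+\theta_\delta$, which now does satisfy the lower bound required by Proposition \ref{prop-carl}; the first part of your argument then gives $f\ast h_\delta=0$, hence $\hat{h}_\delta(\lambda)\,f\ast\varphi_\lambda(g)=0$, and letting $\delta\to 0$ forces $f\ast\varphi_\lambda\equiv 0$ for every $\lambda$, so $f=0$. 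Without this (or an equivalent approximate-identity argument) the proof is incomplete.
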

\begin{proof}  Let us begin with some observations.  From the classification of compact symmetric spaces we know that $ \widehat{G}_K $  can be identified either with $ \Na $ or $ 2\Na$.  Also by a proper choice of $ \rho_X $  we can arrange that  $ c_\lambda +\rho^2_X= (n+\rho_X)^2 .$ To be more precise, for $X=S^d \ \text{or}\  P_d(\mathbb{R})$, $\rho_X=\frac12(d-1)$ and $\rho=\frac12(m+d)$ where $ d =2,4,8 $ and $ m = l-2, 2l-3, 3 $ for $ P_l(\C), P_l(\mathbb{H}) $ and $ P_2(\text{Cay}) $ respectively. We also know that $ d_n $ has polynomial growth as a function of $ n$, see for example in \cite[Table 2, page no. 90]{S}. In fact, we can always have $d_n\leq C(n+\rho_X)^k$ for some $k$ depending on $X$.  \\

First we assume an extra condition that $ \theta(t) \geq c (1+t)^{-1/2}$ for $t\geq 1$.  In view of (\ref{spher-mean-norm} ) the hypothesis on $ f \ast \varphi_\lambda $  is estimated as
\begin{equation}\label{esti} \| \Delta_0^m f(g,\cdot)\|_{L^2(A,w)}^2 \leq C \sum_{\lambda \in \widehat{G}_K} d_\lambda \,c_\lambda^{2m} \,e^{- 2\sqrt{c_\lambda} \, \theta(\sqrt{c_\lambda})} .\end{equation}
Under the identification mentioned above, the above equation transforms into 
\begin{align}
	\label{esti2}
	 \| \Delta_0^m f(g,\cdot)\|_{L^2(A,w)}^2 \leq C \sum_{n \in\mathcal{N}} d_n \,c_n^{2m} \,e^{- 2\sqrt{c_n} \, \theta(\sqrt{c_n})}
\end{align}
where $\mathcal{N}$ denotes $\mathbb{N}$ or $2\mathbb{N}$ depending on the symmetric space $X$.
By our choice of $\rho_X$, $ n^2 \leq  c_n  \leq (n+\rho_X)^2$ and hence as $\theta$ is decreasing we get
\begin{align*}
	 \sum_{n \in\mathcal{N}} d_n \,c_n^{2m} \,e^{- 2\sqrt{c_n} \, \theta(\sqrt{c_n})}\leq C \sum_{n \in\mathcal{N}} (n+\rho_X)^{2m+k} \,e^{- 2 n \, \theta(n+\rho_X)}.
\end{align*} 
Once again, as $\theta(n+\rho_X)\leq \theta(\rho_X)$ we have 
$$ n \theta(n+\rho_X) = (n+\rho_X) \theta(n+\rho_X) - \rho_X \theta(n+\rho_X)  \geq (n+\rho_X) \theta(n+\rho_X) - \rho_X \theta(\rho_X) $$ so  that the above sum is dominated by 
\begin{align*}
	e^{ 2\,\rho_X\theta(\rho_X)}  \,  \sum_{n \in\mathcal{N}} (n+\rho_X)^{2m+k} \,e^{- 2(n+\rho_X) \, \theta(n+\rho_X)}.
\end{align*} 
Thus  from \ref{esti2} we obtain the estimate
\begin{align*}
	\| \Delta_0^m f(g,\cdot)\|_{L^2(A,w)}^2 \leq C \, \sum_{n \in\mathcal{N}} (n+\rho_X)^{2m+k} \,e^{- 2(n+\rho_X) \, \theta(n+\rho_X)}
\end{align*} 
and hence the Carleman condition $\sum_{m=1}^{\infty}\| \Delta_0^m f(g,\cdot)\|_{L^2(A,w)}^{-\frac1{2m}}=\infty $ follows from Proposition \ref{prop-carl}. Therefore, by using Theorem \ref{cher-sym} we see that  for each $g\in V$, $f(g,r)=0$ for all $r>0$. Now proceeding as in the last part of Theorem \ref{cher-sym} we conclude that $f=0.$\\

In order to remove the extra assumption on $ \theta $ we need to construct a compactly supported $ K$-invariant function $ h $ for which $ |\hat{h}(\lambda)| \leq C e^{-c\sqrt{c_{\lambda}} (1+\sqrt{c_\lambda})^{-1/2}} $ where $ \hat{h}(\lambda) = ( h, \varphi_\lambda) $ are the Jacobi (polynomial) coefficients of $ h.$ Such a construction will be done in section 6, see Theorem 6.2. More generally, we construct a family of $K$-invariant functions $ h_\delta,\, \delta >0 $ such that  $ |\hat{h}_\delta(\lambda)| \leq C e^{-\sqrt{c_{\lambda}} \theta_\delta(\sqrt{c_\lambda})}  $ where $ \theta_\delta(t) \geq a_\delta (1+t)^{-1/2} $ and $ \hat{h}_\delta(\lambda) \rightarrow 1 $ as $ \delta \rightarrow 0,$ see Remark \ref{rem-ing}.
Since  $ h_\delta $ are $K$-invariant, $ f \ast h_\delta \ast \varphi_\lambda(g) = \hat{h}_\delta(\lambda) \,  f \ast \varphi_\lambda(g)  $ it follows that $ f \ast h_\delta $ satisfies the hypothesis of the theorem with  $ \theta $ replaced  by $ \theta(t)+ \theta_\delta(t)$ we can conclude that $ f \ast h_\delta = 0 .$  This in turn implies that  $ \hat{h}_\delta(\lambda) \,  f \ast \varphi_\lambda(g)  = 0 $ and by letting $ \delta \rightarrow 0 $ we conclude that $ f \ast \varphi_\lambda = 0 $ for all $ \lambda $ and hence $ f = 0.$ This proves the theorem.
\end{proof}

\section{Laguerre polynomials and Special Hermite expansions}

In this section we consider the special Hermite operator $ L $ ( also called twisted Laplacian) and prove analogues of Theorems \ref{cher-sym} and \ref{ing-sym}. A general reference for this section is \cite[Chapter 2]{TH3}. The operator under consideration is
$$ L = -\Delta+\frac{1}{4}|z|^2 -i \sum_{j=1}^n \big( x_j \frac{\partial}{\partial y_j}- y_j \frac{\partial}{\partial x_j} \big) $$
which is an elliptic operator on $ \C^n $ with an explicit spectral decomposition. The spectrum consists of the integers of the form $ (2k+n), k \in \Na $ and the eigenspaces associated to each of these eigenvalues are infinite dimensional.  They are spanned by $ \Phi_{\alpha,\beta}, \alpha, \beta \in \Na^n, |\alpha| = k $ where $ \Phi_{\alpha,\beta} $ are the special Hermite functions defined in terms of the Hermite functions. Let us briefly recall the definition of Hermite functions.
The Hermite polynomials $H_k(x)$ for $k=0,1,2,...$ and $x\in\mathbb{R}$ are  defined by $$H_k(x):=(-1)^ke^{x^2}\frac{d^k}{dx^k}(e^{-x^2}).$$ The normalised Hermite functions on $\mathbb{R}$ are given by $$h_k(x):=(2^k\sqrt{\pi}k!)^{-\frac{1}{2}}H_k(x)e^{-\frac{1}{2}x^2}.$$
 We define the normalised multi-dimensional Hermite functions on $ \R^n $ by $$\Phi_{\alpha}(x):=\prod_{j=1}^{n}h_{\alpha_j}(x_j), ~~ x \in\R^n,\, \alpha\in\mathbb{N}^n.$$
 The special Hermite functions are then defined as the matrix coefficients of the Schrodinger representation of the Heisenberg group. More explicitly,
 $$ \Phi_{\alpha,\beta}(z) = (2\pi)^{-n/2}  \int_{\R^n}  e^{i(x \cdot \xi+ \frac{1}{2} x \cdot y)} \Phi_\alpha(\xi+y) \Phi_\beta(\xi) d\xi.$$
 These functions form an orthonormal basis for $ L^2(\C^n) $ leading to the special Hermite expansion  (See \cite[Theorem 2.3.1]{TH3})  
 $$ f(z)= \sum_{\alpha, \beta \in \Na^n} \langle f , \Phi_{\alpha,\beta} \rangle \Phi_{\alpha,\beta}(z) = \sum_{k=0}^\infty \big(   \sum_{|\alpha|=k} \sum_{\beta \in \Na^n}\langle f , \Phi_{\alpha,\beta} \rangle \Phi_{\alpha,\beta}(z)\big).$$
 
 The above can be put in a compact form in terms of the Laguerre functions $ \varphi_k^{n-1}(z).$ 
 Let $ L_k^{n-1}(t)  $ stand for  Laguerre polynomials of  type $(n-1).$ We define
 $$\varphi_k^{n-1}(z) = \frac{k!(n-1)!}{(k+n-1)!}  L_k^{n-1}(\frac{1}{2}|z|^2) e^{-\frac{1}{4}|z|^2} .$$
 It is then known that (\cite[page no. 58]{TH3}). 
 $$    \sum_{|\alpha|=k} \sum_{\beta \in \Na^n}\langle f , \Phi_{\alpha,\beta} \rangle \Phi_{\alpha,\beta}(z) = (2\pi)^{-n}  f\times \varphi_k^{n-1}(z) $$
 where for any two functions $ f $ and $ g $ from $ L^1(\C^n) $ their twisted convolution is defined by
 $$ f \times g(z) = \int_{\C^n} f(z-w) g(w) e^{\frac{i}{2} \Im (z \cdot \bar{w})} dw.$$
 Thus the special Hermite expansion of  a function $ f \in L^2(\C^n) $ and Parseval's identity reads 
 \begin{equation}\label{parseval} f(z)  = (2\pi)^{-n} \sum_{k=0}^\infty f\times \varphi_k^{n-1}(z),\, \,\,  \|f\|_2^2  = (2\pi)^{-n} \sum_{k=0}^\infty  \|f\times \varphi_k^{n-1}\|_2^2
 \end{equation}
 and each $ f \times \varphi_k^{n-1} $ is an eigenfunction of the operator $ L $ with eigenvalue $ (2k+n).$\\
 
 First we prove the following version of Chernoff's theorem for the special Hermite operator.
 
 \begin{thm}\label{cher-sp-her} Let $ f \in C^{\infty}(\C^n) $ be such that $ L^mf \in L^2(\C^n) $ for all $ m \geq 0 $  and satisfies the Carleman condition 
 $ \sum_{m=1}^\infty  \| L^m f \|_2^{-1/(2m)} = \infty.$ Then $ f $ cannot vanish on any open set unless it is identically zero.
\end{thm}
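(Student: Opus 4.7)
The plan is to reduce to the one-dimensional result (Theorem \ref{chernoff}) by means of twisted spherical means, in the same spirit as the compact symmetric space case. For $w \in \C^n$ and $r > 0$ set
$$ \tilde f(w, r) := \frac{1}{\omega_{2n-1}} \int_{S^{2n-1}} f(w - r\sigma)\, e^{\frac{i}{2} r\, \Im(w \cdot \bar\sigma)}\, d\sigma, $$
where $\omega_{2n-1}$ is the surface area of $S^{2n-1}$. Passing to polar coordinates in the definition of twisted convolution shows that for any radial $g(z) = g_0(|z|)$,
$$ f \times g(w) = \omega_{2n-1} \int_0^\infty \tilde f(w, r)\, g_0(r)\, r^{2n-1}\, dr. $$
Taking $g = \varphi_k^{n-1}$ and invoking the orthogonality of the Laguerre functions in $L^2(\R^+, r^{2n-1}dr)$, the Laguerre coefficients $b_k(w)$ of $\tilde f(w, \cdot)$ in this basis are a nonzero scalar multiple of $c_k\, f \times \varphi_k^{n-1}(w)$, where $c_k = \|\varphi_k^{n-1}\|^{-2}$. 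Since $\varphi_k^{n-1}(0) = 1$, $\lambda_k = 2k + n \geq 0$, and both $\lambda_k$ and $c_k$ grow polynomially in $k$, the radial part $L_{\mathrm{rad}}$ of $L$ acting on $L^2(\R^+, r^{2n-1} dr)$ with the orthogonal system $\{\varphi_k^{n-1}\}$ fits exactly the framework of Theorem \ref{chernoff}.

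Assume $f$ vanishes on a nonempty open set $V$ and fix $w \in V$. For sufficiently small $r > 0$ one has $w - r\sigma \in V$ uniformly in $\sigma \in S^{2n-1}$, so $\tilde f(w, r) = 0$ on some interval $(0, \delta)$. To invoke Theorem \ref{chernoff} it remains to transfer the Carleman hypothesis. By Parseval in the Laguerre basis,
$$ \|L_{\mathrm{rad}}^m \tilde f(w, \cdot)\|_{L^2(r^{2n-1}dr)}^2 = \omega_{2n-1}^{-2} \sum_{k=0}^\infty (2k + n)^{2m}\, c_k\, |f \times \varphi_k^{n-1}(w)|^2. $$
Since $f \times \varphi_k^{n-1}$ is an eigenfunction of the elliptic operator $L$ with eigenvalue $(2k+n)$, Sobolev embedding yields a pointwise estimate of the form $|F(w)|^2 \leq C(1+\lambda)^{2s}\|F\|_2^2$ for any $L$-eigenfunction $F$ with eigenvalue $\lambda$ and some fixed $s$ depending only on $n$. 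Substituting this into the sum above and absorbing the polynomial factor $c_k$, we obtain $\|L_{\mathrm{rad}}^m \tilde f(w, \cdot)\|_2 \leq C\, \|L^{m + J} f\|_2$ for some integer $J$ independent of $m$. Writing $a_m = \|L^{m+J} f\|_2^{-1/(2(m+J))}$, the hypothesis guarantees $\sum_{m=1}^\infty a_m = \infty$ (as a tail of a divergent series), and Lemma \ref{serieslem} then forces $\sum_{m=1}^\infty a_m^{1 + J/m} = \sum_{m=1}^\infty \|L^{m+J} f\|_2^{-1/(2m)} = \infty$. Consequently $\sum_{m=1}^\infty \|L_{\mathrm{rad}}^m \tilde f(w, \cdot)\|^{-1/(2m)} = \infty$.

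Theorem \ref{chernoff} now forces $\tilde f(w, \cdot) \equiv 0$, so $b_k(w) = 0$ and therefore $f \times \varphi_k^{n-1}(w) = 0$ for every $k \in \Na$ and every $w \in V$. Each $f \times \varphi_k^{n-1}$ is an eigenfunction of the elliptic operator $L$ with polynomial coefficients, hence real-analytic on $\C^n$ by analytic elliptic regularity; vanishing on the open set $V$ then forces $f \times \varphi_k^{n-1} \equiv 0$ for every $k$, and the Parseval identity (\ref{parseval}) gives $f = 0$. The principal technical obstacle is the passage from $L^2$ norms to pointwise values via the Sobolev estimate on individual spectral subspaces: the naive bound $|f \times \varphi_k^{n-1}(w)| \leq \|f\|_2 \|\varphi_k^{n-1}\|_2$ is far too weak to convert the hypothesis on $\|L^m f\|_2$ into the required Carleman condition for $\tilde f(w, \cdot)$, and it is essential to exploit that each spectral projection sits in a single eigenspace of $L$, where the elliptic operator becomes multiplication by $(2k+n)$.
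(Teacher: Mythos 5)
Your proof is correct and follows the same overall route as the paper's: reduce to the radial Laguerre expansion via the twisted spherical means $ f\times\mu_r(z) $, check that the profile $ r \mapsto f\times\mu_r(z) $ vanishes near $ r=0 $ for $ z\in V $, transfer the Carleman condition to this radial function, invoke Theorem \ref{chernoff}, and finish by analytic hypoellipticity of $ L $ and Parseval. The one place you genuinely diverge is the pointwise control of $ f\times\varphi_k^{n-1}(z) $ needed to dominate $ \| L_0^m F_z\|_2 $ by $ \| L^{m+J} f\|_2 $. You obtain it from interior elliptic regularity plus Sobolev embedding on each eigenspace, which costs a polynomial factor $ (2k+n)^{2s} $ and forces a second application of Lemma \ref{serieslem} to absorb the index shift $ J $; this is valid (the constant in the local elliptic estimate depends on the base point through the coefficients of $ L$, but $ z $ is fixed). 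The paper instead exploits the idempotency $ f\times\varphi_k^{n-1} = (2\pi)^{-n}\, (f\times\varphi_k^{n-1})\times\varphi_k^{n-1} $ together with Cauchy--Schwarz at the point, which gives $ |f\times\varphi_k^{n-1}(z)|^2 \leq C\, \frac{(k+n-1)!}{k!(n-1)!}\, \| f\times\varphi_k^{n-1}\|_2^2 $; the combinatorial factor cancels exactly against the normalizing constant $ c_k $, so one gets $ \| L_0^m F_z\|_2 \leq C \| L^m f\|_2 $ with no shift and no further appeal to Lemma \ref{serieslem}. Your closing remark that the ``naive'' bound is far too weak is accurate for $ |f\times\varphi_k^{n-1}(z)| \leq \|f\|_2 \|\varphi_k^{n-1}\|_2 $, but the same Cauchy--Schwarz idea applied to the projection itself (rather than to $ f $) is precisely the sharp estimate the paper uses; your Sobolev route buys generality --- it needs no reproducing identity for the $ \varphi_k^{n-1} $ --- at the price of the extra polynomial loss.
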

 \begin{proof}
 We apply  Theorem \ref{chernoff} to expansions in terms of  the Laguerre functions  
$$  \psi_k^{n-1}(r)  =   \frac{k!(n-1)!}{(k+n-1)!} \varphi_k^{n-1}(r) $$
which form an orthogonal basis for the space $ L^2(\R^+, r^{2n-1}dr).$   These are eigenfunctions of the Laguerre operator $ L_0 = \big(-\frac{d^2}{dr^2}- \frac{2n-1}{r} \frac{d}{dr} +\frac{1}{4} r^2 \big)$ with eigenvalues $ (2k+n).$ This follows from the fact that $ L_0 $ are the radial part of $ L $ and $ \varphi_k $ are radial eigenfunctions of $ L.$ Moreover, it is also known that
$\psi_k^{n-1}(0) = 1 $ and
\begin{equation}\label{normalising}  c_k  =  \int_0^\infty  \psi_k^{n-1}(r)^2 r^{2n-1} dr =  2^{n-1}(n-1)! \frac{k!(n-1)!}{(k+n-1)!}   .\end{equation}

 In order to apply Theorem \ref{chernoff}  we make use of the twisted spherical mean value operator  
 $$ f \times \mu_r(z) =  \int_{|w|=r}  f(z-w) e^{\frac{i}{2}\Im( z\cdot \bar{w})} d\mu_r $$ 
 where $ \mu_r $ is the normalised surface measure on $ |w| = r $ in $ \C^n.$  The important fact about the twisted spherical means which is relevant for us  is that it  has the following expansion in terms of Laguerre functions $ \varphi_k^{n-1}(r):$ 
\begin{equation}\label{twisted} f \times \mu_r(z)  = (2\pi)^{-n} \sum_{k=0}^\infty  f \times \varphi_k^{n-1}(z)  \frac{k!(n-1)!}{(k+n-1)!} \varphi_k^{n-1}(r).\end{equation}
This has been proved in \cite[Theorem 4.1]{TH0}. 
Suppose now $ f $ vanishes on an open set $ V.$  It then follows from the definition that  for any $ z \in V $ the function $ F_z(r)  = f \times \mu_r(z) $ vanishes for $ 0 < r <\delta(z) $  where $ \delta(z) $ is the distance between $ z $ and the complement of $ V.$ Since $ \psi_k^{n-1}(r) $ are eigenfunctions of $ L_0, $ the expansion of the  function $ F_z(r) = f \times \mu_r(z) $ gives 
$$  \int_0^\infty |L_0^mF_z(r)|^2 r^{2n-1} dr = (2\pi)^{-2n} \sum_{k=0}^\infty   (2k+n)^{2m} \,\, c_k\,  | f\times \varphi_k^{n-1}(z)|^2 \,   . $$
Since $ f \rightarrow (2\pi)^{-n} f \times \varphi_k^{n-1} $ is a projection, we have 
$$ f \times \varphi_k^{n-1} = (2\pi)^{-n} f \times \varphi_k^{n-1} \times \varphi_k^{n-1} $$ and hence using $ \| f \times g \|_2 \leq \|f\|_2 \|g\|_2 ,$ we get the estimate
$$  | f\times \varphi_k^{n-1}(z)|^2 \leq (2\pi)^{-2n}    \| f \times \varphi_k^{n-1} \|_2^2 \| \varphi_k^{n-1}\|_2^2 = C_n \frac{(k+n-1)!}{k!(n-1)!}     \| f \times \varphi_k^{n-1} \|_2^2.    $$
Therefore, using (\ref{parseval}) applied to the function $ L^m f $  and (\ref{normalising}) we obtain 
$$  \| L_0^m F_z \|_2^2 \leq C_n\, \sum_{k=0}^\infty    \, (2k+n)^{2m} \, | f\times \varphi_k^{n-1}(z)|^2 \,  \leq C_n \| L^m f\|_2^2 . $$
From the above it follows that $ \sum_{m=0}^\infty  \| L_0^m F_z\|_2^{-1/2m} \geq  \sum_{m=0}^\infty  \| L^m f \|_2^{-1/2m} $ and therefore by Theorem \ref{chernoff} we conclude that $ f \times \mu_r(z) = 0 $ for all $ r > 0$ under the condition that $ z \in V.$

Being the Laguerre coefficient of $ F_z(r)$ which is identically zero,  $ f \times \varphi_k^{n-1}(z) = 0 $ on the open set $ V.$ But $ f \times \varphi_k^{n-1} $ is real analytic as it is an eigenfunction of the analytic hypo-elliptic operator $ L $ and hence $ f \times \varphi_k^{n-1}(z) = 0 $ for all $ z \in \C^n.$ As this is true for any $ k $ we conclude that $ f = 0.$
\end{proof}

\begin{rem} As in the case of compact symmetric spaces we remark that the above theorem is true under the weaker assumption that
$$ \sum_{m=1}^\infty \Big( \sum_{k=0}^\infty     (2k+n)^{2m} \,  c_k  \,  | f \times  \varphi_k^{n-1}(z)|^2 \Big)^{-1/4m} = \infty $$
for all $ z \in V $ where $ f $ is assumed to vanish.
\end{rem}

We also have the following version of Ingham's theorem.

\begin{thm}\label{ing-sp-her} Let $ \theta $ be a positive decreasing function on $ [0, \infty) $  vanishing at infinity that satisfies $ \int_1^\infty \theta(t) t^{-1} dt = \infty.$ For any  function $ f \in L^2(\C^n)$  vanishing on a nonempty open set $ V \subset \C^n $ the estimates
$$  |f \times \varphi_k^{n-1}(z)| \leq C_z e^{- \sqrt{2k+n} \, \theta(\sqrt{2k+n})},\,\,  $$
cannot hold for all  $ z \in V $ unless the function vanishes identically.
\end{thm}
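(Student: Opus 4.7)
The plan is to mirror the strategy used for Theorem \ref{ing-sym} in the compact symmetric space setting, transposing it to twisted spherical means and Laguerre expansions. Following that scheme, I would first establish the theorem under the auxiliary assumption $\theta(t) \geq c(1+t)^{-1/2}$ for $t \geq 1$, and afterwards remove this extra hypothesis by twisted-convolving $f$ with a family of compactly supported radial auxiliary functions constructed in Section 6.

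For the main argument, I would fix $z \in V$ and set $F_z(r) := f \times \mu_r(z)$. Since $f$ vanishes on $V$ and twisted spherical means do not enlarge support outside the ball of radius $r$, $F_z$ vanishes on $(0, \delta(z))$, where $\delta(z)$ is the distance from $z$ to $V^c$. Using the expansion (\ref{twisted}) and orthogonality of the $\psi_k^{n-1}$ in $L^2(\R^+, r^{2n-1}dr)$, as in the proof of Theorem \ref{cher-sp-her}, one gets
\begin{equation*}
\|L_0^m F_z\|_{L^2(\R^+, r^{2n-1}dr)}^2 = (2\pi)^{-2n}\sum_{k=0}^\infty (2k+n)^{2m}\, c_k\, |f \times \varphi_k^{n-1}(z)|^2 .
\end{equation*}
Inserting the pointwise hypothesis on $|f \times \varphi_k^{n-1}(z)|$ together with the boundedness of $c_k$ from (\ref{normalising}) yields
\begin{equation*}
\|L_0^m F_z\|_{L^2(\R^+, r^{2n-1}dr)}^2 \leq C_z \sum_{k=0}^\infty (2k+n)^{2m} e^{-2\sqrt{2k+n}\,\theta(\sqrt{2k+n})}.
\end{equation*}
Grouping indices by $N = \lfloor \sqrt{2k+n}\rfloor$ (each block contributes at most $O(N)$ values of $k$, and, by monotonicity of $\theta$, $e^{-2\sqrt{2k+n}\theta(\sqrt{2k+n})} \leq e^{-2N\theta(N+1)}$) reduces the right-hand side to a sum of the form $C \sum_N (N+1)^{4m+1} e^{-2(N+1)\theta(N+1)}$, which is exactly the quantity $a_m^2$ controlled by Proposition \ref{prop-carl}. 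The extra assumption $\theta(t) \geq c(1+t)^{-1/2}$ allows Proposition \ref{prop-carl} to apply, so $\|L_0^m F_z\|_2$ satisfies the Carleman condition.

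With this in hand, Theorem \ref{chernoff} applies to $F_z$, forcing $F_z \equiv 0$ on $\R^+$. Reading off the Laguerre coefficients from (\ref{twisted}) then gives $f \times \varphi_k^{n-1}(z) = 0$ for every $z \in V$ and every $k$; since $f \times \varphi_k^{n-1}$ is real analytic (as an eigenfunction of the analytic hypoelliptic operator $L$), it vanishes on all of $\C^n$, and (\ref{parseval}) yields $f \equiv 0$. To remove the auxiliary bound on $\theta$, I would use the analogue for the special Hermite setting of the construction referenced in the proof of Theorem \ref{ing-sym}: a family of compactly supported radial functions $h_\delta$ on $\C^n$ satisfying $h_\delta \times \varphi_k^{n-1} = \widehat{h}_\delta(k)\, \varphi_k^{n-1}$, with $|\widehat{h}_\delta(k)| \leq C e^{-\sqrt{2k+n}\,\theta_\delta(\sqrt{2k+n})}$, $\theta_\delta(t) \geq a_\delta(1+t)^{-1/2}$, and $\widehat{h}_\delta(k) \to 1$ as $\delta \to 0$. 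Choosing $\delta$ small enough that $f \times h_\delta$ still vanishes on an open subset of $V$, applying the previous steps to $f \times h_\delta$ with decay function $\theta + \theta_\delta$ gives $f \times h_\delta = 0$; letting $\delta \to 0$ concludes the proof.

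The main obstacle is not the Carleman/real-analyticity machinery — that carries over from the compact symmetric space proof almost verbatim — but rather the construction in the last step of compactly supported radial functions on $\C^n$ whose Laguerre coefficients decay like $e^{-\sqrt{2k+n}\,\theta_\delta(\sqrt{2k+n})}$ with the required lower bound on $\theta_\delta$; this is the Ingham-sharpness construction promised in Section 6. A secondary technicality is the discretisation step, since the spectral parameter $\sqrt{2k+n}$ is not an integer, so the Laguerre indices have to be regrouped by square-root integer parts before Proposition \ref{prop-carl} is directly applicable.
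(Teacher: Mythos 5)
Your proposal is correct and follows essentially the same route as the paper: prove the result first under the auxiliary bound $\theta(t)\geq c(1+t)^{-1/2}$ by running the twisted spherical mean / Proposition \ref{prop-carl} / Theorem \ref{chernoff} / real-analyticity argument, then remove that bound by twisted convolution with the family $g_\delta$ from Theorem \ref{splhermite-ingham}. Your explicit regrouping of the Laguerre indices by $\lfloor\sqrt{2k+n}\rfloor$ is a detail the paper leaves implicit under ``proceeding as in the case of symmetric spaces,'' and it is a correct way to bring the sum into the form covered by Proposition \ref{prop-carl}.
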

\begin{proof} Proceeding as in the case of symmetric spaces, the theorem is first proved under the extra assumption $ \theta(t) \geq c(1+t)^{-1/2} $ for $t\geq 1$ on $ \theta.$ Then we make use of the construction mentioned above. For any $ \delta > 0 $ we can construct  a radial $ g_\delta $ supported on $ B(0,\delta) $ having the following two properties: (i)   for any $ f \in L^1(\C^n) ,\,\,f \times g_\delta \rightarrow f $  in  $ L^1(\C^n) $ as $ \delta \rightarrow 0 $ and (ii) for any $ k \in \Na$,
$$ \| g_\delta \times  \varphi_k^{n-1}\|_2 \leq C e^{-\sqrt{2k+n} \,\, \theta_\delta(\sqrt{2k+n})}  $$  where $ \theta_\delta(t) \geq a_\delta (1+t)^{-1/2} .$
We refer to the next section for the construction of such a family $ g_\delta,$ see Theorem \ref{splhermite-ingham}.
 Then the function $ f \times g_\delta $ satisfies the hypothesis on a smaller open subset of $ V $ and $ \theta $ replaced by $ \theta(t)+\theta_\delta(t) \ge a_{\delta} (1+t)^{-1/2}.$ Hence by the previous part of the theorem we can conclude $ f \times g_\delta = 0 $ which will then prove $ f = 0$ as $ f \times g_\delta \rightarrow f .$ 
\end{proof}

\section{Hermite expansions}

 In this section we consider the Hermite operator $ H = -\Delta+|x|^2 $ on $ \R^n $ whose eigenfunctions are given by Hermite functions.   It is well known that these functions are eigenfunctions of  $H=-\Delta+|x|^2$ with eigenvalues $ (2|\alpha|+n)$ and  $\{\Phi_{\alpha}:\alpha\in\mathbb{N}^n\}$ forms an orthonormal basis for $L^2(\R^n).$ So every $f\in L^2(\R^n)$ has the expansion
 $$f=\sum_{\alpha \in \Na^n}(f,\Phi_{\alpha})\Phi_{\alpha}$$  where the Hermite coefficient of the function $f$ are  defined by $(f,\Phi_{\alpha})=\int_{\R^n}f(x)\Phi_{\alpha}(x)dx.$ The Plancherel formula takes the form $$\|f\|_2^2=\sum_{\alpha \in \Na^n}|(f,\Phi_{\alpha})|^2.$$ 
Let $ P_k $ stand for the orthogonal projection  of $ L^2(\R^n) $ onto the $k$-th eigenspace of $ H $ spanned by $ \Phi_\alpha, |\alpha| =k $ we can write the Hermite expansion as 
$$ f = \sum_{k=0}^\infty P_kf ,\,\,\,   H f = \sum_{k=0}^\infty (2k+n) P_kf.$$

The connection between Hermite expansions and Laguerre functions is brought out by considering the Weyl transform $W$  of the surface measure $ \mu_r $ on $ S_r^{2n-1} $ in $ \C^n.$ By using the fact that $ (2\pi)^{-n}  W(\varphi_k) = P_k$ it follows from the expansion (\ref{twisted}) that we have
\begin{equation}\label{her-lag}  W(\mu_r) = \sum_{k=0}^\infty  \frac{k!(n-1)!}{(k+n-1)!} \,  \varphi_k^{n-1}(r)  \, P_k.\end{equation}
The action of $ W(\mu_r) $ on a function $ f \in L^2(\R^n) $ is by definition
$$ W(\mu_r)f(\xi)  = \int_{|z| = r} \pi(z)f(\xi) d\mu_r(z) = \int_{ |z|=r}  e^{i(x\cdot \xi+ \frac{1}{2} x \cdot y)} f(\xi+y) d\mu_r .$$
From the above it is clear that $ W(\mu_r)f(\xi) $ vanishes  over $ 0 < r < \delta-|\xi|,$  whenever $ f $ vanishes over the ball $ B(\xi,\delta).$ When $ f $ is  such a function, then from (\ref{her-lag}) we have the following expansion for $ F_\xi(r):=W(\mu_r)f(\xi)$:
\begin{equation}\label{her-lag-1}
 F_\xi(r)  = \sum_{k=0}^\infty  \frac{k!(n-1)!}{(k+n-1)!} \, P_kf(\xi)  \,  \varphi_k^{n-1}(r). \end{equation}
 Under the assumption that $ H^m f \in L^2(\R^n) $ for all $ m \in \Na $ it follows from the above that $ L_0 F_\xi(r) = W(\mu_r)(H^mf)(\xi) $ where $L_0$ is the Laguerre operator given by $$ L_0 = -\frac{d^2}{dr^2}- \frac{2n-1}{r} \frac{d}{dr} +\frac{1}{4} r^2 .$$ Hence
 \begin{equation}\label{eq-1} \int_{0}^\infty |L_0^mF_\xi(r)|^2 r^{2n-1} dr = c_n \sum_{k=0}^\infty  (2k+n)^{2m}  \frac{k!(n-1)!}{(k+n-1)!}  | P_kf(\xi)|^2 .\end{equation}
 This allows us to prove the following analogue of Chernoff's theorem for the Hermite operator.
 
 \begin{thm}\label{her-cher} Assume that $ f \in C^{\infty}(\mathbb{R}^n) $ is such that $ H^m f \in L^2(\R^n) $ for all $ m \geq 0 $ and  satisfies the condition $ \sum_{m=1}^\infty  \| H^m f \|_2^{-1/2m} = \infty.$ Then $ f $ cannot vanish in a neighbourhood of zero unless it vanishes identically.
 \end{thm}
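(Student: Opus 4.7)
The plan is to follow the strategy of Theorem \ref{cher-sp-her}, transferring the Carleman hypothesis on $\|H^mf\|_2$ to the one-variable Laguerre function $F_\xi(r)=W(\mu_r)f(\xi)$ and then invoking the abstract Chernoff theorem (Theorem \ref{chernoff}). The relevant ingredients are already present in the excerpt: the expansion \eqref{her-lag-1} realises $F_\xi$ as a Laguerre series with coefficients proportional to $P_kf(\xi)$, and the identity \eqref{eq-1} expresses $\|L_0^mF_\xi\|_{L^2(\R^+,r^{2n-1}dr)}^2$ as a weighted sum involving $(2k+n)^{2m}|P_kf(\xi)|^2$.

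The core technical step is a pointwise bound of $|P_kf(\xi)|^2$ by $\|P_kf\|_2^2$ up to a polynomial factor in $k$. Using $P_kf(\xi)=\sum_{|\alpha|=k}\langle f,\Phi_\alpha\rangle\Phi_\alpha(\xi)$ and Cauchy--Schwarz I get
$$|P_kf(\xi)|^2\le \|P_kf\|_2^2 \sum_{|\alpha|=k}|\Phi_\alpha(\xi)|^2,$$
and the reproducing-kernel sum on the right is known to grow at most polynomially in $k$ for each fixed $\xi$ (it is essentially a Laguerre function evaluated at $|\xi|$). Inserting this into \eqref{eq-1} and using the same Cauchy--Schwarz trick as in Theorem \ref{chernoff}, namely writing $(2k+n)^{2m}=(2k+n)^{-2j}(2k+n)^{2(m+j)}$ with $j$ large enough that the resulting series $\sum_k(2k+n)^{-2j}(\text{polynomial in }k)$ converges, I obtain an estimate of the shape
$$\|L_0^mF_\xi\|_{L^2(\R^+,r^{2n-1}dr)}\le C_{j,\xi}\,\|H^{m+j}f\|_2.$$
An application of Lemma \ref{serieslem} then converts the Carleman hypothesis $\sum_m\|H^mf\|_2^{-1/2m}=\infty$ into $\sum_m\|L_0^mF_\xi\|_{L^2(\R^+,r^{2n-1}dr)}^{-1/2m}=\infty$.

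Now suppose $f$ vanishes on the open ball $B(0,\delta)$ and fix any $\xi\in B(0,\delta/2)$. From the definition of $W(\mu_r)$, only values of $f$ on $B(\xi,r)\subset B(0,\delta)$ enter $F_\xi(r)$ for $r<\delta/2$, so $F_\xi$ vanishes in a neighbourhood of $r=0$. Applying Theorem \ref{chernoff} to $F_\xi$ on $L^2(\R^+,r^{2n-1}dr)$, I conclude $F_\xi\equiv 0$; orthogonality of the Laguerre basis $\psi_k^{n-1}$ then forces $P_kf(\xi)=0$ for every $k$ and every $\xi\in B(0,\delta/2)$. Since each $P_kf$ is a finite linear combination of Hermite functions and hence real analytic on $\R^n$, it must vanish identically, and therefore so does $f=\sum_kP_kf$.

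The step I expect to be the main obstacle is producing the polynomial pointwise control of the reproducing kernel $\sum_{|\alpha|=k}|\Phi_\alpha(\xi)|^2$; this is a classical but non-trivial ingredient that underlies all pointwise comparisons between $P_kf$ and its $L^2$-norm. With this bound in hand, the rest of the proof is essentially a direct transposition of the special Hermite argument of Theorem \ref{cher-sp-her}.
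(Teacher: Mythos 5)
Your proposal is correct and follows essentially the same route as the paper: bound $|P_kf(\xi)|^2$ by $\|P_kf\|_2^2\,\Phi_k(\xi,\xi)$ via the reproducing kernel, feed this into \eqref{eq-1}, apply Theorem \ref{chernoff} to $F_\xi$, and finish by real analyticity of $P_kf$. The only (harmless) difference is that the paper invokes the sharper kernel estimate $\Phi_k(\xi,\xi)\leq C(2k+n)^{n/2-1}$ (Lemma 3.2.2 of \cite{TH1}), which makes $\frac{k!(n-1)!}{(k+n-1)!}\Phi_k(\xi,\xi)$ uniformly bounded and yields $\|L_0^mF_\xi\|_2\leq c_n\|H^mf\|_2$ directly, with no $m\mapsto m+j$ shift and no appeal to Lemma \ref{serieslem}; your cruder polynomial bound plus the shift argument is an equally valid way to close that step.
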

 \begin{proof}  Fix $ \xi $ in the neighbourhood where $ f $ vanishes. We first check that $ \| L_0^mF_{\xi} \|_2^{-1/2m} $ also satisfies the Carleman condition. As in the case of special Hermite expansions, we have
 $$ P_kf(\xi) = \int_{\R^n} \Phi_k(\xi,\eta) P_kf(\eta) d\eta$$
 where $ \Phi_k(\xi,\eta) = \sum_{|\alpha|=k} \Phi_\alpha(\xi) \Phi_\alpha(\eta) $ is the kernel of $ P_k.$ From the above we obtain
 $$ |P_kf(\xi)|^2 \leq \|P_kf\|_2^2 \int_{\R^n} \Phi_k(\xi,\eta)^2 d\eta = \|P_kf\|_2^2 \,\,  \Phi_k(\xi,\xi).$$
 Good estimates for $ \Phi_k(\xi,\xi) $ are known: for example, when $ n \geq 2,$ one has the uniform estimate $\Phi_k(\xi,\xi) \leq C (2k+n)^{n/2-1} $ (see Lemma 3.2.2 in \cite{TH1}) and when $ n=1, \Phi_k(\xi,\xi) = h_k(\xi)^2 \leq C (2k+1)^{-1/6}.$ In any case,  $ \frac{k!(n-1)!}{(k+n-1)!} \Phi_k(\xi,\xi) $ is uniformly bounded and so from (\ref{eq-1}) we obtain the estimate 
\begin{equation}\label{eq-2} \| L_0^mF_\xi\|_2^2 \leq  c_n \sum_{k=0}^\infty  (2k+n)^{2m}   \| P_kf \|_2^2  = c_n \| H^mf\|_2^2  .\end{equation}
 This proves our claim that $ \| L_0^mF_{\xi} \|_2^{-1/2m} $ satisfies the Carleman condition and hence by Theorem \ref{chernoff} we conclude that $ F_\xi(r) = 0 $ for all $ r >0 $ and $ \xi $ in a neighbourhood of zero.  As before this allows us to conclude that $ P_kf $ vanishes near zero and hence by the analyticity argument (which is easy to see now as $ e^{\frac{1}{2}|\xi|^2} P_kf(\xi) $ is a polynomial) that $ P_kf = 0 $ for every $ k .$ This completes the proof.
 \end{proof}

 \begin{rem} Once again  we remark that the above theorem is true under the weaker assumption  that for all $ \xi  \in V $ where $ f $ is assumed to vanish, $$ \sum_{m=1}^\infty \Big( \sum_{k=0}^\infty     (2k+n)^{2m} \,  \frac{k!(n-1)!}{(k+n-1)!}  \,  | P_kf(\xi)|^2 \Big)^{-1/4m} = \infty .$$

 We also have the following version of Ingham's theorem.

\end{rem}

\begin{thm}\label{ing-her} Let $ \theta $ be a positive decreasing function on $ [0, \infty) $  vanishing at infinity that satisfies $ \int_1^\infty \theta(t) t^{-1} dt = \infty.$  Then for any  function $ f\in L^2(\R^n) $   vanishing on a nonempty open set $ V \subset \R^n $ the estimates
$$  | P_kf(\xi)| \leq C_\xi e^{- \sqrt{2k+n} \, \theta(\sqrt{2k+n})},\,\,  $$
cannot hold for all  $ \xi \in V $ unless the function vanishes identically.
\end{thm}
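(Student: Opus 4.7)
The plan is to mimic the proof of Theorem \ref{ing-sp-her}: first establish the theorem under the extra hypothesis $\theta(t)\ge c(1+t)^{-1/2}$ for $t\ge 1$, and then remove it by regularizing $f$ through the Weyl transform of the radial approximate identity $g_\delta$ on $\C^n$ constructed in Section 6.

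For the first step, fix $\xi_0\in V$ and pick $\delta>0$ with $B(\xi_0,2\delta)\subset V$. For $\xi\in B(\xi_0,\delta)$ consider $F_\xi(r)=W(\mu_r)f(\xi)$, which vanishes for $0<r<\delta$ by the support of $f$. The Laguerre expansion (\ref{her-lag-1}), orthogonality in $L^2(\R^+,r^{2n-1}dr)$, and the pointwise decay assumption yield
$$
\|L_0^m F_\xi\|_2^2 \;\le\; C_\xi^2\sum_{k=0}^\infty (2k+n)^{2m}\,c_k^{-1}\,e^{-2\sqrt{2k+n}\,\theta(\sqrt{2k+n})}.
$$
Since $c_k^{-1}$ is polynomially bounded, the change of index $\ell=2k+n$ followed by a dyadic comparison to a sum over $j\sim\sqrt{\ell}$ (in the spirit of the proof of Proposition \ref{prop-carl}) dominates the right-hand side by $C\sum_j j^{4m+O(1)}e^{-2j\theta(j)}$. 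Proposition \ref{prop-carl} then gives $\sum_m\|L_0^m F_\xi\|_2^{-1/2m}=\infty$, and Chernoff's theorem for Laguerre expansions (Theorem \ref{chernoff}) forces $F_\xi\equiv 0$. Reading off Laguerre coefficients gives $P_kf(\xi)=0$ for every $k$ and every $\xi\in B(\xi_0,\delta)$; the real-analyticity of $P_kf$ (being a polynomial times $e^{-|\xi|^2/2}$) then forces $P_kf\equiv 0$ on $\R^n$ for every $k$, so $f\equiv 0$.

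For the second step I take the radial function $g_\delta$ from Section 6, supported in $\{|z|\le\delta\}\subset\C^n$, whose Laguerre coefficients $m_k(g_\delta)$ are uniformly bounded, tend to $1$ as $\delta\to 0$ for each fixed $k$, and satisfy $|m_k(g_\delta)|\le C\,e^{-\sqrt{2k+n}\,\theta_\delta(\sqrt{2k+n})}$ with $\theta_\delta(t)\ge a_\delta(1+t)^{-1/2}$. By (\ref{her-lag}), $W(g_\delta)=\sum_k m_k(g_\delta)P_k$ acts diagonally on the Hermite decomposition, so $P_k(W(g_\delta)f)(\xi)=m_k(g_\delta)P_kf(\xi)$ inherits pointwise decay of rate $\theta+\theta_\delta$, which is bounded below by $a_\delta(1+t)^{-1/2}$. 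Moreover, the identity $W(g_\delta)f(\xi)=\int g_\delta(z)e^{i(x\cdot\xi+\frac12 x\cdot y)}f(\xi+y)\,dz$ makes $W(g_\delta)f$ vanish on the nonempty open set $V_\delta=\{\xi:\overline{B(\xi,\delta)}\subset V\}$ once $\delta$ is small. Step one therefore applies to $W(g_\delta)f$ and yields $W(g_\delta)f\equiv 0$, i.e.\ $m_k(g_\delta)P_kf\equiv 0$ for every $k$; choosing, for each $k$, a $\delta$ small enough that $m_k(g_\delta)\ne 0$ yields $P_kf\equiv 0$ for all $k$, hence $f\equiv 0$.

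The main obstacle lies entirely in Section 6: producing a family of compactly supported radial functions on $\C^n$ whose Laguerre coefficients enjoy square-root-subexponential decay while still approaching $1$ pointwise as $\delta\to 0$. Once that construction is available, everything above is a transcription of the special Hermite argument, with the Weyl transform playing the role of twisted convolution.
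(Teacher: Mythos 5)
Your proposal is correct and follows essentially the same route as the paper: first proving the result under the auxiliary hypothesis $\theta(t)\ge c(1+t)^{-1/2}$ via the spherical means $F_\xi(r)=W(\mu_r)f(\xi)$, Proposition \ref{prop-carl} and Theorem \ref{chernoff}, and then removing that hypothesis with the Weyl transform of the radial functions $g_\delta$ from Theorem \ref{splhermite-ingham}. The only (harmless) deviations are that the weight in your bound for $\|L_0^m F_\xi\|_2^2$ should be $c_k=\frac{k!(n-1)!}{(k+n-1)!}\cdot 2^{n-1}(n-1)!$ rather than $c_k^{-1}$ (both are polynomially bounded, so your estimate still holds), and that you conclude from $R_k(g_\delta)P_kf\equiv 0$ by letting $\delta\to 0$ for each fixed $k$, whereas the paper instead uses $W(g_\delta)\to I$ on $L^2(\R^n)$.
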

\begin{proof} Once again we first assume that $ \theta(t) \geq c (1+t)^{-1/2} $ in proving the theorem. For the general case, we make use  of the same functions $ g_\delta$ used in the proof of Theorem \ref{ing-sp-her}. We define $ f_\delta(\xi) = W(g_\delta)f(\xi),$ or more explicitly,
$$ f_\delta(\xi) =  \int_{\C^n}  g_{\delta}(x+iy) e^{i(x \cdot \xi+\frac{1}{2} x \cdot y)} f(\xi+y) dx dy.$$
As $ g_\delta $ is supported on $ |z| \leq \delta $ and $ f $ vanishes on $ V $ it follows that  for small enough $ \delta $ the function $ f_\delta $ vanishes on an open subset of $ V.$ Moreover, as $ g_\delta $ is radial,
$$ W(g_\delta)  = \sum_{k=0}^\infty  R_k(g_\delta) P_k  $$ where $ R_k(g_\delta) $ are defined by the relation $ g_\delta \times \varphi_k^{n-1} = R_k(g_\delta) \varphi_k^{n-1}.$ Consequently, we have
$$  P_k f_\delta(\xi) = P_k W(g_\delta)f(\xi) = R_k(g_\delta) P_kf(\xi).$$ The decay assumptions on $ \| g_\delta \times \varphi_k^{n-1}\| $ and on $ |P_kf(\xi)| $ allows us to conclude that 
$$ |P_kf_\delta(\xi)| \leq C e^{-\sqrt{2k+n} \, (\theta+\theta_\delta)(\sqrt{2k+n})}  $$ 
where $ \theta_\delta(t)+ \theta(t)\ge a_{\delta  }(1+t)^{-1/2}.$ Hence we can conclude that $ f_\delta = 0$ for any $ \delta >0.$ Since $ g_\delta $ is an approximate identity for $ L^1(\C^n) $ under twisted convolution, it follows that $ W(g_\delta) \rightarrow I $ on $ L^2(\R^n) $ and hence $ f_\delta \rightarrow f $ in $ L^2(\R^n).$ This proves that $ f =0.$

\end{proof}
  
  Replacing the function $ t\, \theta(t) $ by an increasing function $ \psi(t) $   Levinson \cite{Levinson} proved the following version of Ingham type  theorem:
 	\begin{thm}[Levinson]
 		Let $\psi:[0,\infty)\rightarrow [0,\infty)$ be an increasing function satisfying $\lim_{t\rightarrow \infty}\psi(t)=\infty.$ Assume that $\int_{1}^{\infty}\psi(t)t^{-2}dt=\infty.$ Suppose $f\in L^1(\R)$ and the Fourier transform of $f$ satisfies 
 		$$|\hat{f}(\xi)|\leq C e^{-\psi(\xi)}, ~\text{for all}~|\xi|\geq 1.$$ Then $f$ cannot vanish on a non-empty open set unless it is identically zero. 
 	\end{thm}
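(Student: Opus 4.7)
The plan is to reduce Levinson's theorem to the classical Denjoy--Carleman theorem on quasi-analytic classes of smooth functions on the line, in the spirit of the original proof of Ingham's theorem recalled in the introduction. Since $\psi$ is nondecreasing with $\psi(t)\to\infty$, the bound $|\hat{f}(\xi)|\leq Ce^{-\psi(\xi)}$ for $|\xi|\geq 1$ gives faster-than-polynomial decay of $\hat{f}$, so every moment $\int_{\R}|\xi|^m|\hat{f}(\xi)|\,d\xi$ is finite and $f$ is smooth. After translating, we may assume that $f$ vanishes on a neighbourhood $(-\delta,\delta)$ of the origin, whence $f^{(m)}(0)=0$ for every $m\geq 0$. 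Differentiating under the Fourier integral yields the pointwise bound $|f^{(m)}(x)|\leq M_m$ with
$$ M_m \;:=\; A+B\int_1^\infty \xi^m e^{-\psi(\xi)}\,d\xi, $$
for absolute constants $A,B$, the contribution from $|\xi|\leq 1$ being controlled by $\|\hat{f}\|_\infty$.

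The Denjoy--Carleman theorem tells us that such a smooth $f$, with $f^{(m)}(0)=0$ for all $m$, must vanish identically provided the class defined by $(M_m)$ is quasi-analytic. This is equivalent to Carleman's condition $\sum_{m=1}^\infty M_m^{-1/m}=\infty$, or, by Mandelbrojt's reformulation, to the divergence of $\int_1^\infty r^{-2}\log T(r)\,dr$, where $T(r):=\sup_{m\geq 0} r^m/M_m$ is the associated trace function. The crux of the proof is therefore the estimate $\log T(r)\geq \psi(r)-O(\log r)$ for large $r$, which combined with the hypothesis $\int_1^\infty \psi(t)t^{-2}\,dt=\infty$ immediately delivers the required divergence, and hence $f\equiv 0$.

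To prove this estimate one applies Laplace's method to $\int_1^\infty \xi^m e^{-\psi(\xi)}\,d\xi$: for fixed $r$, choosing $m$ comparable to $r\,\psi'(r)$ makes $\xi=r$ the saddle point of $\xi^m e^{-\psi(\xi)}$, yielding $M_m\leq \exp(m\log r-\psi(r)+O(\log r))$ and hence $r^m/M_m\geq \exp(\psi(r)-O(\log r))$. The main obstacle is that $\psi$ is only assumed to be nondecreasing, with no smoothness or convexity given, so the saddle-point step has to be set up carefully. The standard workaround is first to replace $\psi$ by the largest minorant $\widetilde{\psi}\leq \psi$ for which $\widetilde{\psi}\circ\exp$ is convex on $[0,\infty)$: a classical lemma of Mandelbrojt on convex regularisations ensures that the divergence of $\int_1^\infty\widetilde{\psi}(t)t^{-2}\,dt$ is preserved, while the saddle-point estimate then goes through for $\widetilde{\psi}$, using the Legendre transform of $\widetilde{\psi}\circ\exp$ in place of the (possibly nonexistent) derivative $\psi'$. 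Alternatively, one may follow Levinson's own argument in \cite{Levinson}, which circumvents Legendre transforms altogether by means of a Phragm\'en--Lindel\"of type estimate on a suitable entire extension of $f$.
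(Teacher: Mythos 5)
First, note that the paper does not prove this statement at all: it is quoted verbatim as Levinson's classical theorem, attributed to \cite{Levinson}, and used as a black box (in the form of the $\R^n$ version from \cite{BSR}) in the proof of Theorem \ref{ing-her-1}. So there is no internal proof to compare yours with; what follows is an assessment of your argument on its own terms, and there it has a genuine gap right at the start. From the hypotheses ``$\psi$ increasing, $\psi(t)\to\infty$, $\int_1^\infty\psi(t)t^{-2}dt=\infty$'' you cannot conclude that $\hat{f}$ decays faster than any polynomial, that the moments $M_m=A+B\int_1^\infty\xi^m e^{-\psi(\xi)}d\xi$ are finite, or that $f$ is smooth. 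Take $s_1=e$, $s_{j+1}=e^{s_j}$, and $\psi(t)=\tfrac12 s_j$ for $t\in(s_j,s_{j+1}]$. This $\psi$ is increasing and tends to infinity, and $\int_1^\infty\psi(t)t^{-2}dt=\sum_j\tfrac12\bigl(1-s_je^{-s_j}\bigr)=\infty$; yet $e^{-\psi(t)}=s_{j+1}^{-1/2}$ on all of $(s_j,s_{j+1}]$, so $\int_1^\infty e^{-\psi(t)}dt=\infty$ and already $M_0=\infty$. The entire Denjoy--Carleman setup (finite moments, smoothness of $f$, the trace function $T(r)$) collapses for such $\psi$, which is perfectly admissible for the theorem.

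Your proposed repair makes matters worse: the ``classical lemma'' you invoke --- that replacing $\psi$ by its largest minorant $\widetilde{\psi}$ with $\widetilde{\psi}\circ\exp$ convex preserves the divergence of $\int_1^\infty\widetilde{\psi}(t)t^{-2}dt$ --- is false. In the example above, $\psi(s_{j+1})=\tfrac12 s_j=\tfrac12\log s_{j+1}$, so in the variable $u=\log t$ the graph of $\psi(e^u)$ touches the line $u/2$ at the points $u=\log s_{j+1}$; any convex minorant therefore lies below $u/2$ everywhere beyond $s_2$, i.e.\ $\widetilde{\psi}(t)\le\tfrac12\log t$, and $\int_1^\infty\widetilde{\psi}(t)t^{-2}dt<\infty$. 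Since every convex-in-$\log$ minorant is dominated by the largest one, no such regularisation of this $\psi$ retains the divergence, so the saddle-point/quasi-analyticity route cannot be rescued this way. The theorem itself is nevertheless true for such $\psi$ --- it follows, for instance, from Beurling's theorem that an $f\in L^1(\R)$ vanishing on a set of positive measure with $\int_{\R}\log|\hat f(\xi)|(1+\xi^2)^{-1}d\xi=-\infty$ must vanish identically --- but that proof lives in Hardy-space/harmonic-measure territory, which is essentially the Phragm\'en--Lindel\"of alternative you mention only in passing and do not develop. Your strategy does become workable if one adds the hypothesis $\psi(t)\ge c\,t^{1/2}$ (then all moments are finite and a computation of the type carried out in Proposition \ref{prop-carl} yields the Carleman condition), and it is worth noting that this is exactly the regime in which the present paper actually applies Levinson's theorem (cf.\ Proposition \ref{four-esti} and the proof of Theorem \ref{ing-her-1}); but as a proof of the theorem as stated, the argument does not close.
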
 
 	We prove an analogue of the above theorem for the Hermite expansion. 
 The proof does not use the above analysis: we make use of asymptotic properties of Laguerre functions and deduce the result from Levinson's theorem for the Fourier transform on $ \R^n$, proved in \cite{BSR}.   We remark that in \cite{BR}, the authors proved the analogue of Levinson's theorem in a more general setting, namely on Riemannian symmetric spaces of noncompact type.  
 
 \begin{thm}\label{ing-her-1}  Let $ \psi $ be a positive increasing function on $ [0, \infty) $ satisfying $\lim_{t\rightarrow \infty}\psi(t)=\infty.$ Assume that $\int_{1}^{\infty}\psi(t)t^{-2}dt=\infty.$   For any  function  $ f\in L^1 \cap L^2(\R^n) $   vanishing on a nonempty open set $ V \subset \R^n $ the estimates
 	$$  \|P_k f\|_2 \leq C e^{-  \psi(\sqrt{2k+n})}\,\,  $$
 	cannot hold for all  $ k \in \Na $ unless the function vanishes identically. 
 	\end{thm}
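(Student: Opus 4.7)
The plan is to reduce the theorem to the $\R^n$ version of Levinson's theorem for the Fourier transform proved in \cite{BSR}, with the transfer mediated by asymptotic properties of the Laguerre functions. The starting point is the twisted-spherical-mean identity (\ref{her-lag-1}): for $\xi\in V$ with $B(\xi,\delta)\subset V$ one has
\[F_\xi(r)=W(\mu_r)f(\xi)=\sum_{k=0}^\infty\frac{k!(n-1)!}{(k+n-1)!}\,P_kf(\xi)\,\varphi_k^{n-1}(r)=0,\qquad 0<r<\delta.\]
The classical Hilb/Plancherel--Rotach asymptotic, uniformly for $r$ in a compact subset of $(0,\infty)$, gives
\[\varphi_k^{n-1}(r)=C_n\bigl(r\sqrt{2k+n}\bigr)^{-(n-1)}J_{n-1}\!\bigl(r\sqrt{2k+n}\bigr)\bigl(1+O(k^{-1})\bigr),\]
so the vanishing of $F_\xi$ on $(0,\delta)$ can be read as the vanishing of a Hankel-type series with spectrum $\{\sqrt{2k+n}\}_{k\ge 0}$.

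The first step is to regularise $f$ by twisted convolution (on the Heisenberg group) or ordinary convolution with a small radial compactly supported approximate identity $\Psi_\eps$, producing $g_\eps:=f\ast\Psi_\eps\in L^1\cap L^2(\R^n)$ which vanishes on an open subset of $V$ and converges to $f$ in $L^1(\R^n)$ as $\eps\to 0$. The crucial second step is to upgrade the discrete Hermite-coefficient decay $\|P_kf\|_2\le Ce^{-\psi(\sqrt{2k+n})}$ into a continuous Fourier-transform decay $|\widehat{g_\eps}(\eta)|\le C_\eps e^{-\psi(|\eta|)}$ valid for every $\eta\in\R^n$. Here one exploits that $H$ commutes with the Euclidean Fourier transform and has spectrum $\{2k+n\}$, so $\widehat{g_\eps}$ has the same Hermite coefficient structure as $g_\eps$, and then uses the Bessel asymptotic above, the Paley--Wiener character of $\widehat{\Psi_\eps}$, and the monotonicity of $\psi$ (which sandwiches $\psi(|\eta|)$ between two consecutive values $\psi(\sqrt{2k+n})$) to interpolate the decay from the discrete spectrum to all frequencies.

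The third step is then a direct appeal to Levinson's theorem for the Euclidean Fourier transform from \cite{BSR}: since $\psi$ is positive, increasing, $\psi(t)\to\infty$, and $\int_1^\infty\psi(t)t^{-2}dt=\infty$, the open-set vanishing of $g_\eps$ combined with $|\widehat{g_\eps}(\eta)|\le C_\eps e^{-\psi(|\eta|)}$ forces $g_\eps\equiv 0$. Letting $\eps\to 0$ and using $g_\eps\to f$ in $L^1(\R^n)$ yields $f\equiv 0$.

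The main obstacle is the second step: the Hermite-coefficient hypothesis only gives information at the discrete sample points $\sqrt{2k+n}$, whereas Levinson's theorem demands a pointwise bound on $\widehat{g_\eps}$ on all of $\R^n$. Unlike the Ingham-type theorems proved earlier in the paper, which reduced to Carleman's moment criterion via $\|H^mf\|_2$, the hypothesis $\int\psi(t)t^{-2}dt=\infty$ here is genuinely weaker than Carleman's, so we cannot recycle the moment argument; we must instead carry out the Laguerre-to-Bessel transfer carefully and rely on the dedicated Euclidean Levinson result. Once this passage is effected, the remainder of the proof is essentially formal.
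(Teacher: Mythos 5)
Your overall strategy---transfer the discrete Hermite-coefficient decay to a pointwise decay of the Euclidean Fourier transform and then invoke Levinson's theorem from \cite{BSR}---is the same as the paper's. But the two steps you yourself identify as the heart of the matter are exactly where the sketch has genuine gaps. Consider the transfer first. Writing $\hat f(\eta)=\sum_k(-i)^kP_kf(\eta)$ and $|P_kf(\eta)|\le \|P_kf\|_2\,\Phi_k(\eta,\eta)^{1/2}$, the terms with $2k+n\ge \tfrac12|\eta|^2$ are indeed handled by monotonicity of $\psi$ (your ``sandwiching''), but the terms with $2k+n<\tfrac12|\eta|^2$ are not: for those $k$ the hypothesis only gives $e^{-\psi(\sqrt{2k+n})}$, which can be enormous compared with $e^{-\psi(|\eta|)}$. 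What saves the day is the Agmon-type exponential decay of the Hermite kernel outside the classically allowed region, $\Phi_k(\eta,\eta)\le C(2k+n)^{n/2}e^{-2\gamma|\eta|^2}$ for $|\eta|^2>2(2k+n)$ (Lemma \ref{phik}); the Hilb/Bessel asymptotic you quote is the oscillatory-region one, valid for $r$ in a compact set, and gives no decay there. This kernel estimate, not an interpolation between consecutive eigenvalues, is the actual content of the ``Laguerre-to-Bessel transfer.''

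Second, even granting the kernel bounds, both pieces of the sum converge only if $\sum_k e^{-c\,\psi(\sqrt{2k+n})}(2k+n)^{N}<\infty$ for suitable $N$, and this fails for a general increasing $\psi\to\infty$ with $\int_1^\infty\psi(t)t^{-2}dt=\infty$ (such a $\psi$ can be as small as $\log\log t$ on long stretches). This is why the paper first proves the theorem under the extra hypothesis $\psi(t)\ge c\,t^{1/2}$ (Proposition \ref{four-esti}) and then removes it by composing with the Weyl transform $W(g_\delta)$ of the radial function from Theorem \ref{splhermite-ingham}: since $W(g_\delta)$ is a spectral multiplier of $H$, it multiplies $\|P_kf\|_2$ by $|R_k(g_\delta)|\le Ce^{-\psi_\delta(\sqrt{2k+n})}$ with $\psi_\delta(t)\ge a_\delta t^{1/2}$, upgrading the decay to $e^{-(\psi+\psi_\delta)(\sqrt{2k+n})}$. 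Your regularisation is not set up to do this job: you only require $\Psi_\ve$ to be a compactly supported approximate identity, and if you use ordinary Euclidean convolution the hypothesis $\|P_kg_\ve\|_2\le Ce^{-\psi(\sqrt{2k+n})}$ is not even preserved, since $P_k$ does not commute with translations; the twisted/Weyl version must be used, and its purpose is to improve the spectral decay, not merely to smooth $f$. Finally, the opening reduction via $F_\xi(r)=W(\mu_r)f(\xi)$ plays no role in this theorem and can be dropped.
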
 
 	We prove the above theorem by reducing it to the case of  Fourier transform on $ \R^n.$ Thus our goal is to show  that the function $ \hat{f} $ satisfies the estimate 
 	\begin{equation}\label{four-est} |\hat{f}(\xi)|\leq C e^{-\frac{1}{4}\psi(\frac{1}{\sqrt{2}}|\xi|)  }.
 	\end{equation}
 	In order to do so , we make use of  the pointwise  estimates on the kernel of $ P_k$ which is given by
 $$  \Phi_k(x,y) =  \sum_{|\alpha|=k}  \Phi_\alpha(x)  \Phi_\alpha(y).$$  This kernel can be expressed in terms of Laguerre polynomials $ L_k^{n/2-1} $ of type $ (n/2-1) $ as shown in the following lemma, see \cite{T5} for a proof.
 
 \begin{lem}\label{formula}
 	$$\Phi_{k}(x,y)=\pi^{-\frac{n}{2}}\sum_{j=0}^{k}(-1)^jL_j^{\frac{n}{2}-1}(\frac{1}{2}|x+y|^2)e^{-\frac{1}{4}|x+y|^2}L_{k-j}^{\frac{n}{2}-1}(\frac{1}{2}|x-y|^2)e^{-\frac{1}{4}|x-y|^2}.$$
 \end{lem}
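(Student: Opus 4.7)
The plan is to extract the coefficient of $r^k$ from Mehler's generating function for the Hermite kernel. The starting point is Mehler's formula, which states that for $|r|<1$,
\begin{equation*}
\sum_{k=0}^{\infty}\Phi_k(x,y)\,r^k \,=\, \pi^{-n/2}(1-r^2)^{-n/2}\exp\Big(-\tfrac{1}{2}\tfrac{1+r^2}{1-r^2}(|x|^2+|y|^2) + \tfrac{2r}{1-r^2}\,x\cdot y\Big).
\end{equation*}
This can be taken as known; it follows from the one-dimensional Mehler formula applied coordinatewise, or equivalently it is the integral kernel of $e^{-tH}$ after the substitution $r=e^{-2t}$. The problem is therefore reduced to rewriting the right-hand side as a product of two power series in $r$ and identifying the Cauchy product coefficients with the asserted sum.

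The decisive step is the orthogonal change of variables $u = x+y$, $v = x-y$, under which $|x|^2+|y|^2 = \tfrac{1}{2}(|u|^2+|v|^2)$ and $x\cdot y = \tfrac{1}{4}(|u|^2-|v|^2)$. A short manipulation collapses the exponent into the separated form
\begin{equation*}
-\tfrac{1}{4}\tfrac{1-r}{1+r}|u|^2 \,-\, \tfrac{1}{4}\tfrac{1+r}{1-r}|v|^2,
\end{equation*}
while the prefactor factorises as $(1-r^2)^{-n/2} = (1+r)^{-n/2}(1-r)^{-n/2}$. Thus the Mehler kernel factors as $\pi^{-n/2}\,e^{-|u|^2/4}\,e^{-|v|^2/4}\,A(u,r)\,B(v,r)$, where $A$ depends only on $(u,r)$ and $B$ only on $(v,r)$.

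Next I would invoke the classical Laguerre generating identity $\sum_{j\ge 0}L_j^{\alpha}(t)\,s^j = (1-s)^{-\alpha-1}e^{-ts/(1-s)}$ with $\alpha = n/2-1$. Writing $\tfrac{1-r}{1+r} = 1 - \tfrac{2r}{1+r}$ and pulling out $e^{-|u|^2/4}$ identifies $A(u,r)$ as the Laguerre series evaluated at $s=-r$ and $t=|u|^2/2$, yielding $A(u,r) = \sum_{j\ge 0}L_j^{n/2-1}(|u|^2/2)(-r)^j$. The analogous manipulation $\tfrac{1+r}{1-r} = 1 + \tfrac{2r}{1-r}$ gives $B(v,r) = \sum_{l\ge 0}L_l^{n/2-1}(|v|^2/2)\,r^l$ (no sign). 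Multiplying the two series and picking out the $r^k$ coefficient via the Cauchy product produces precisely
\begin{equation*}
\pi^{-n/2}\,e^{-|u|^2/4}e^{-|v|^2/4}\sum_{j=0}^{k}(-1)^j L_j^{n/2-1}(|u|^2/2)\,L_{k-j}^{n/2-1}(|v|^2/2),
\end{equation*}
which is the claimed identity once one substitutes $u=x+y$, $v=x-y$.

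The only real obstacle is the careful algebra of the exponent after the change of variables together with the sign bookkeeping: it is precisely the substitution $s=-r$ in the Laguerre series for the $u$-factor that supplies the $(-1)^j$ in the final sum, and writing $\tfrac{1\mp r}{1\pm r} = 1 \mp \tfrac{2r}{1\pm r}$ correctly is essential to match the form $(1-s)^{-\alpha-1}e^{-ts/(1-s)}$. Everything else is routine: both sides of the generating-function identity are analytic in $r$ on $|r|<1$, so matching Taylor coefficients is legitimate, and the double sum converges uniformly on compact subsets of $(x,y,r)$.
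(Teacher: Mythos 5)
Your proof is correct. Note, however, that the paper does not actually prove this lemma at all: it simply cites Thangavelu's \emph{Duke Math.\ J.} paper \cite{T5} for a proof, so there is no in-paper argument to compare against. Your Mehler-function derivation is the standard self-contained route, and the computations check out: the change of variables $u=x+y$, $v=x-y$ does give $|x|^2+|y|^2=\tfrac12(|u|^2+|v|^2)$ and $x\cdot y=\tfrac14(|u|^2-|v|^2)$, the exponent collapses to $-\tfrac14\tfrac{1-r}{1+r}|u|^2-\tfrac14\tfrac{1+r}{1-r}|v|^2$ since $1+r^2\mp 2r=(1\mp r)^2$, and the substitutions $s=-r$ (for the $u$-factor, producing the $(-1)^j$ and absorbing $(1+r)^{-n/2}$) and $s=r$ (for the $v$-factor) in the Laguerre generating function $\sum_j L_j^{\alpha}(t)s^j=(1-s)^{-\alpha-1}e^{-ts/(1-s)}$ with $\alpha=n/2-1$, $t=|u|^2/2$ resp.\ $|v|^2/2$, reproduce exactly the two factors. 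The Cauchy product then yields the stated identity, and matching coefficients is legitimate since both sides are analytic in $r$ on $|r|<1$ for fixed $x,y$. This is a perfectly acceptable replacement for the external citation; it buys self-containedness at the cost of assuming Mehler's formula, which is itself standard.
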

 
 Now we recall some definitions and facts about Laguerre functions. For any $\delta>-1,$ the Laguerre polynomials of type $\delta$ are defined by 
 $$e^{-t}t^{\delta}L_k^{\delta}(t)=\frac{1}{k!}\frac{d^k}{dt^k}(e^{-t}t^{k+\delta})$$
 for $t>0$ and $k \in \Na.$ The explicit form of $L_k^{\delta}(t)$ which is a polynomial of degree $k$, is given by 
 $$L_k^{\delta}(t)=\sum_{j=0}^{k}\frac{\Gamma(k+\delta+1)}{\Gamma(j+\delta+1)\Gamma(k-j+1)}\frac{(-t)^j}{j!}.$$  
 We now introduce the normalised Laguerre functions $ \mathcal{L}_k^\delta $ defined as follows.
 $$\mathcal{L}^{\delta}_k(t)=\left(\frac{\Gamma(k+1)}{\Gamma(k+1+\delta)}\right)^{\frac{1}{2}}e^{-\frac{t}{2}}t^{\frac{\delta}{2}}L^{\delta}_k(t),\:\, t >0.$$
 Then it is well known that for any fixed $\delta>-1$, $\left\lbrace\mathcal{L}^{\delta}_k\right\rbrace_{k=0}^{\infty}$ is an orthonormal basis for  $L^2(\mathbb{R}^+,dt).$  What we need are the  following estimates on these Laguerre functions which can be found in \cite{TH1}.
 \begin{lem}
 	\label{lag}
 	For $\delta>-1$, the following estimates hold.
 	$$|\mathcal{L}^{\delta}_k(x)|\leq C\begin{cases}
 	(x\nu)^{\frac{\delta}{2}} &if\ 0\leq x\leq\frac{1}{\nu}\\
 	(x\nu)^{-\frac{1}{4}} & if\ \frac{1}{\nu}\leq x\leq \frac{\nu}{2}\\
 	\nu^{-\frac{1}{4}}(\nu^{\frac{1}{3}}+|\nu-x|)^{-\frac{1}{4}}  &if\ \frac{\nu}{2}\leq x\leq \frac{3\nu}{2}\\
 	e^{-\gamma x} &if \ x\geq \frac{3\nu}{2}
 	\end{cases}$$ where $\gamma$ is a fixed constant and $\nu=2(2k+\delta+1).$
 \end{lem}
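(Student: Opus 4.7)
The plan is to establish each of the four estimates by invoking classical uniform asymptotic expansions of the Laguerre functions $\mathcal{L}_k^\delta$ in terms of elementary or special functions, and then apply standard pointwise bounds on those special functions. The starting point is the Liouville normal form: setting $v(t) = t^{1/2}\mathcal{L}_k^\delta(t)$, a direct computation (using the defining ODE $tL_k^{\delta\prime\prime} + (\delta+1-t)L_k^{\delta\prime} + kL_k^\delta = 0$ and the change of dependent variable) shows that
\begin{equation*}
 v''(t) + \Big(\frac{\nu}{4t} - \frac{1}{4} - \frac{\delta^2 - 1}{4t^2}\Big) v(t) = 0,
\end{equation*}
where $\nu = 2(2k+\delta+1)$. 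The coefficient has a unique positive turning point near $t \sim \nu$, which partitions $(0,\infty)$ into an oscillatory region $t<\nu$ and a classically forbidden region $t>\nu$. The four ranges in the lemma correspond exactly to the natural subdivision of these regions dictated by the size of the asymptotic parameter $\sqrt{\nu x}$ or $\nu^{2/3}(1-x/\nu)$.

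For the first two ranges $0 \leq x \leq \nu/2$ the plan is to apply a Hilb-type formula expressing $\mathcal{L}_k^\delta$ in terms of a Bessel function,
\begin{equation*}
 \mathcal{L}_k^\delta(x) = c_{k,\delta} \, (x/\nu)^{\delta/2} \nu^{-1/2} J_\delta(\sqrt{\nu x}) + R_k^\delta(x),
\end{equation*}
with the remainder $R_k^\delta$ uniformly controlled on the stated region. When $0 \leq x \leq 1/\nu$ we have $\sqrt{\nu x} \leq 1$, and the power series bound $|J_\delta(u)| \leq C u^\delta$ for small $u$ yields $|\mathcal{L}_k^\delta(x)| \leq C(\nu x)^{\delta/2}$. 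When $1/\nu \leq x \leq \nu/2$ one instead applies the large-argument estimate $|J_\delta(u)| \leq C u^{-1/2}$, which produces the bound $C(\nu x)^{-1/4}$.

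For the turning-point region $\nu/2 \leq x \leq 3\nu/2$ the plan is to invoke the uniform Plancherel--Rotach asymptotic, which expresses $\mathcal{L}_k^\delta(x)$ in terms of the Airy function $\mathrm{Ai}(\zeta)$ evaluated at a scaled variable $\zeta$ satisfying $\zeta \sim \nu^{-1/3}(x - \nu)$ on this range. Combined with the global bound $|\mathrm{Ai}(\zeta)| \leq C(1+|\zeta|)^{-1/4}$ for all real $\zeta$, this gives the claimed estimate $C\nu^{-1/4}(\nu^{1/3} + |\nu - x|)^{-1/4}$. Finally, for $x \geq 3\nu/2$ one is strictly inside the classically forbidden region, and a WKB-type argument (or the exponentially decaying branch of the Airy asymptotic) gives $|\mathcal{L}_k^\delta(x)| \leq C\exp\bigl(-\tfrac12\int_\nu^x \sqrt{(s-\nu)/s}\, ds\bigr)$. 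For $x \geq 3\nu/2$ the integrand is bounded below by a positive constant, so this decay is dominated by $Ce^{-\gamma x}$ for a fixed $\gamma > 0$.

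The main obstacle is obtaining these uniform asymptotics with explicit, effective error terms, most delicately across the turning point where the Bessel regime and the Airy regime must match. This requires the full Langer--Olver theory of uniform asymptotic expansions for second-order ODEs with a simple turning point. Once that machinery is in place (as carried out in the references underlying \cite{TH1}, or in Szeg\H{o}'s and Muckenhoupt's treatments of Laguerre expansions), all four bullet points follow by direct substitution into the standard bounds on $J_\delta$ and $\mathrm{Ai}$, with constants uniform in $k$ for fixed $\delta > -1$.
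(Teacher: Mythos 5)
The paper does not prove this lemma at all: it is quoted verbatim from \cite{TH1} (where it appears as a standard estimate going back to the uniform asymptotics of Erd\'elyi, as used by Askey--Wainger and Muckenhoupt), so there is no in-paper argument to compare yours against. Your outline correctly identifies the classical route: the Liouville normal form $v''+\bigl(\frac{\nu}{4t}-\frac14-\frac{\delta^2-1}{4t^2}\bigr)v=0$ with turning point at $t=\nu$ is right, the four ranges do correspond to the Bessel regime, the oscillatory regime, the Airy regime and the forbidden regime, and the numerology checks out (e.g.\ the bounds match at $x=1/\nu$ and at $x=\nu/2$, and the prefactor $\nu^{-1/3}$ in the Airy approximation combined with $|\mathrm{Ai}(\zeta)|\leq C(1+|\zeta|)^{-1/4}$ does reproduce $\nu^{-1/4}(\nu^{1/3}+|\nu-x|)^{-1/4}$).

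That said, as written this is a plan rather than a proof: every one of the four bounds rests on a uniform asymptotic expansion ``with the remainder uniformly controlled,'' and establishing those remainders --- in particular the matching of the Bessel and Airy approximations across $x\sim\nu$ via Langer--Olver theory --- is the entire content of the lemma. You acknowledge this, which is honest, but it means nothing is actually proved beyond what is already deferred to the literature; citing \cite{TH1} directly, as the paper does, accomplishes the same thing. One small slip in the last range: the integrand $\sqrt{(s-\nu)/s}$ is \emph{not} bounded below by a positive constant on all of $[\nu,x]$ (it vanishes at $s=\nu$); what is true, and what you need, is that the integral $\int_\nu^x\sqrt{(s-\nu)/s}\,ds$ is bounded below by a constant multiple of $x$ once $x\geq 3\nu/2$, which follows by substituting $s=\nu(1+u)$ and checking the two cases $x\leq 2\nu$ and $x\geq 2\nu$ separately.
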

 
 We now make use of the above lemma in getting the following estimates on the kernel $ \Phi_k(x,y) $ in view of Lemma \ref{formula}.
 
 \begin{lem}
 	\label{phik}
 	For $|x|^2> 2(2k+n)$, $\Phi_{k}(x,x)\leq C \,(2k+n)^{\frac{n}{2}}\, e^{-2\gamma |x|^2}$  where  $\gamma >0$ is the same constant appearing in the above lemma.
 \end{lem}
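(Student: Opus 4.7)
The plan is to combine the closed-form expression for the spectral projection kernel from Lemma \ref{formula} on the diagonal $y=x$ with the exponentially-decaying regime of the Laguerre estimate in Lemma \ref{lag}. Specializing Lemma \ref{formula} to $y=x$ gives $\tfrac{1}{2}|x+y|^2 = 2|x|^2$ and $\tfrac{1}{2}|x-y|^2 = 0$, so
\begin{equation*}
\Phi_k(x,x) = \pi^{-n/2}\, e^{-|x|^2}\sum_{j=0}^k (-1)^j\, L_j^{n/2-1}(2|x|^2)\, L_{k-j}^{n/2-1}(0).
\end{equation*}
The factor $L_{k-j}^{n/2-1}(0) = \binom{k-j+n/2-1}{k-j}$ is bounded by $C(k-j+1)^{n/2-1}$.

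Next, I would rewrite the remaining Laguerre factor in terms of the normalized function $\mathcal{L}_j^{n/2-1}$ introduced just above Lemma \ref{lag}:
\begin{equation*}
L_j^{n/2-1}(2|x|^2)\, e^{-|x|^2} = \left(\frac{\Gamma(j+n/2)}{\Gamma(j+1)}\right)^{1/2}(2|x|^2)^{-(n/2-1)/2}\, \mathcal{L}_j^{n/2-1}(2|x|^2).
\end{equation*}
For $0 \leq j \leq k$ the transition parameter $\nu_j = 2(2j+n/2) = 4j+n$ satisfies $\nu_j \leq 4k+n$, while the hypothesis $|x|^2 > 2(2k+n)$ yields $2|x|^2 > 4(2k+n) \geq \tfrac{3}{2}\nu_j$. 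Hence the last case of Lemma \ref{lag}, applied with $x$ replaced by $2|x|^2$, gives $|\mathcal{L}_j^{n/2-1}(2|x|^2)| \leq Ce^{-\gamma\cdot 2|x|^2} = Ce^{-2\gamma|x|^2}$, with the same constant $\gamma$.

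Combining these steps, using Stirling's estimate $\Gamma(j+n/2)/\Gamma(j+1) = O((j+1)^{n/2-1})$, together with the observation that $(2|x|^2)^{-(n/2-1)/2}$ is uniformly bounded (since $|x|^2 \geq 2n$), the triangle inequality produces
\begin{equation*}
\Phi_k(x,x) \leq C\, e^{-2\gamma|x|^2} \sum_{j=0}^k (j+1)^{(n-2)/4}(k-j+1)^{n/2-1}.
\end{equation*}
It remains to bound this polynomial sum by $C(2k+n)^{n/2}$, and I expect this to be the main technical obstacle: a crude term-by-term convolution bound only yields $O((k+1)^{(3n-2)/4})$, which matches the claim for $n \in \{1,2\}$ but is weaker than $n/2$ for $n \geq 3$. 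To obtain the sharper exponent for general $n$, one should exploit the sign alternation $(-1)^j$, for instance via the Mehler-type generating identity
\begin{equation*}
\sum_{k=0}^\infty \Phi_k(x,x)\, t^k = \pi^{-n/2}(1-t^2)^{-n/2}\exp\!\Bigl(-\frac{1-t}{1+t}|x|^2\Bigr)
\end{equation*}
and a Cauchy contour estimate on $|t|=\rho$ with $\rho$ chosen so that $\tfrac{1-\rho}{1+\rho}|x|^2 \asymp 2\gamma|x|^2$, then using $|x|^2 > 4k+2n$ to dominate $\rho^{-k}$ by the remaining exponential. Happily, for the application to Theorem \ref{ing-her-1} the precise polynomial exponent is inessential — any polynomial-in-$k$ growth is absorbed into the subsequent Fourier-type estimate.
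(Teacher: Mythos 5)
Your reduction is the same as the paper's: set $y=x$ in Lemma \ref{formula}, rewrite $L_j^{n/2-1}(2|x|^2)e^{-|x|^2}$ in terms of $\mathcal{L}_j^{n/2-1}$, and invoke the last case of Lemma \ref{lag} (your verification that $2|x|^2\ge \tfrac{3}{2}\nu_j$ for all $j\le k$ is correct). The gap is where you discard the factor $(2|x|^2)^{-\frac12(\frac{n}{2}-1)}$ as merely ``uniformly bounded.'' That factor is precisely what saves the exponent: for $n>2$ its power is negative, so the hypothesis $|x|^2>2(2k+n)$ gives
$$ (2|x|^2)^{-\frac12(\frac{n}{2}-1)}\le \big(4(2k+n)\big)^{-\frac12(\frac{n}{2}-1)}=C(2k+n)^{-\frac{n-2}{4}}, $$
and inserting this into your sum lowers your exponent from $\frac{3n-2}{4}$ to $\frac{3n-2}{4}-\frac{n-2}{4}=\frac{n}{2}$. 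In other words, the crude term-by-term triangle-inequality bound \emph{does} reach $(2k+n)^{n/2}$ once this factor is retained; no cancellation from the signs $(-1)^j$ is needed, and your diagnosis that the alternation must be exploited mislocates where the loss occurred. (For $n=1$ the exponent of that factor is positive, but a stray power of $|x|^2$ is harmlessly absorbed into $e^{-2\gamma|x|^2}$.) This retention of the $(2|x|^2)^{-\frac12(\frac n2-1)}$ factor is exactly what the paper's proof does.

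Your fallback via the Mehler identity $\sum_k \Phi_k(x,x)t^k=\pi^{-n/2}(1-t^2)^{-n/2}e^{-\frac{1-t}{1+t}|x|^2}$ and positivity of $\Phi_k(x,x)$ is a legitimate alternative (fix $\rho\in(0,1)$ and absorb $\rho^{-k}\le e^{\frac{|x|^2}{4}\log(1/\rho)}$ into the exponential), and it would bypass Lemma \ref{lag} entirely; but as written it is only a sketch, and the exponential rate it yields is some constant determined by $\rho$ rather than the specific $2\gamma$ of the statement, so the lemma would have to be restated. Your closing observation is accurate: Proposition \ref{four-esti} only needs some polynomial power of $(2k+n)$, so even your weaker exponent $\frac{3n-2}{4}$ would serve the application --- but it does not prove the lemma as stated.
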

 \begin{proof}
 	From the explicit expression of the Laguerre polynomial given above note that $$L^{\delta}_k(0)=\frac{\Gamma(k+\delta+1)}{\Gamma(k+1)\Gamma(\delta+1)}.$$ Using this in Lemma \ref{formula} we have
 	\begin{equation}\label{eqn}
 	\Phi_{k}(x,x)= \pi^{-\frac{n}{2}}\sum_{j=0}^{k}(-1)^jL_j^{\frac{n}{2}-1}(2|x|^2)e^{-|x|^2}\frac{\Gamma(k-j+\frac{n}{2})}{\Gamma(k-j+1)\Gamma(\frac{n}{2})}.
 	\end{equation}
 	Now using the expression for Laguerre function defined above note that
 	\begin{equation}
 	L_j^{\frac{n}{2}-1}(2|x|^2)e^{-|x|^2}=\mathcal{L}^{\frac{n}{2}-1}_j(2|x|^2)\Bigg(\frac{\Gamma(j+\frac{n}{2})}{\Gamma(j+1)}\Bigg)^{\frac{1}{2}}(2|x|^2)^{-\frac{1}{2}(\frac{n}{2}-1)}
 	\end{equation}
 	Using the estimate $\frac{\Gamma(k-j+\frac{n}{2})}{\Gamma(k-j+1)}\leq C (k-j)^{\frac{n}{2}-1}$ we have $$|L_j^{\frac{n}{2}-1}(2|x|^2)e^{-|x|^2}|\leq C |\mathcal{L}^{\frac{n}{2}-1}_j(2|x|^2)|(2|x|^2)^{-\frac{1}{2}(\frac{n}{2}-1)} j^{\frac{1}{2}(\frac{n}{2}-1)} (k-j)^{\frac{n}{2}-1}.$$
 	But since $|x|^2> 2(2k+n)$, by Lemma \ref{lag} we have 
 	$$|L_j^{\frac{n}{2}-1}(2|x|^2)e^{-|x|^2}|\leq C e^{-2\gamma |x|^2}(4(2k+n))^{-\frac{1}{2}(\frac{n}{2}-1)} j^{\frac{1}{2}(\frac{n}{2}-1)} (k-j)^{\frac{n}{2}-1}.$$ Now from \ref{eqn}, using the fact that $j,(k-j)\leq k$ we have 
 	$$ \Phi_{k}(x,x)\leq  C e^{-2\gamma |x|^2} (2k+n)^{-\frac{1}{2}(\frac{n}{2}-1)}k^{\frac{n}{2}-1+\frac{n}{4}-\frac{1}{2}}(k+1)
 	\leq   C (2k+n)^{\frac{n}{2}}e^{-2\gamma |x|^2}.$$ 
 \end{proof}
 
 We are now in a position to prove the estimate (\ref{four-est}) on the Fourier transform of $ f.$
 
 \begin{prop}\label{four-esti}   Assume that $ \psi $ satisfies the hypothesis of  Theorem \ref{ing-her-1}.  If we further assume that  $\psi(t)\geq  c\, t^{1/2}$ for $ t \geq 1$, then we have the estimate
 		$$  |\hat{f}(\xi)|\leq C e^{-\frac{1}{4}\psi(\frac{1}{\sqrt{2}}|\xi|) }.$$
 \end{prop}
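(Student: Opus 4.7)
The plan is to expand $\hat{f}$ via the Hermite basis and then split the resulting series at the Laguerre ``turning point'' $|\xi|^2 = 2(2k+n)$, applying the two complementary pointwise bounds on the kernel $\Phi_k(\xi,\xi)$ in the two regimes. Since the Hermite functions are eigenfunctions of the Fourier transform, $\mathcal{F}\Phi_\alpha = (-i)^{|\alpha|}\Phi_\alpha$, grouping by $|\alpha|=k$ gives
\[
 \hat{f}(\xi) = \sum_{k=0}^\infty (-i)^k P_kf(\xi),
\]
so the task reduces to bounding $\sum_k |P_kf(\xi)|$. Using the reproducing kernel representation $P_kf(\xi) = \int_{\R^n}\Phi_k(\xi,\eta) P_kf(\eta)\, d\eta$ and the Cauchy--Schwarz inequality I get $|P_kf(\xi)| \leq \|P_kf\|_2\, \Phi_k(\xi,\xi)^{1/2}$, which combined with the hypothesis $\|P_kf\|_2 \leq Ce^{-\psi(\sqrt{2k+n})}$ reduces everything to pointwise control of $\Phi_k(\xi,\xi)$.

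Fix $\xi$ with $|\xi|$ large and set $k_0 \asymp |\xi|^2/4$, the threshold at which $|\xi|^2 = 2(2k_0+n)$. For $k \leq k_0$ (below the turning point), Lemma \ref{phik} gives $\Phi_k(\xi,\xi) \leq C(2k+n)^{n/2} e^{-2\gamma |\xi|^2}$, so this portion of the sum is dominated by $Ce^{-\gamma|\xi|^2}\, |\xi|^{O(1)}$; in the only interesting range $\psi(t) \leq Ct^2$ this is easily absorbed into $e^{-\frac14 \psi(|\xi|/\sqrt 2)}$ (if $\psi$ grew faster, the decay hypothesis would already force $f$ to be real-analytic and hence identically zero). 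For $k > k_0$ (above the turning point) I would use the uniform bound $\Phi_k(\xi,\xi) \leq C(2k+n)^{n/2-1}$ coming from Lemma \ref{lag}; since $\sqrt{2k+n} \geq |\xi|/\sqrt 2$ throughout this range and $\psi$ is increasing, I split
\[
 e^{-\psi(\sqrt{2k+n})} \leq e^{-\frac12 \psi(\sqrt{2k+n})}\, e^{-\frac12 \psi(|\xi|/\sqrt 2)},
\]
and invoke the extra hypothesis $\psi(t) \geq ct^{1/2}$ to dominate the first factor by $e^{-\frac{c}{2}(2k+n)^{1/4}}$, which after absorbing the polynomial factor $(2k+n)^{n/4-1/2}$ leaves a convergent series. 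Combining the two regimes yields the claimed bound $|\hat f(\xi)| \leq Ce^{-\frac14 \psi(|\xi|/\sqrt 2)}$.

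The main obstacle is bookkeeping at the interface of the two regimes: verifying that $e^{-\gamma|\xi|^2}$ dominates $e^{-\frac14 \psi(|\xi|/\sqrt 2)}$ in the sub-turning-point range, and that the lower bound $\psi(t) \geq ct^{1/2}$ is exactly what makes the super-turning-point tail summable while preserving the factor $e^{-\frac12 \psi(|\xi|/\sqrt 2)}$. The main analytic input, namely the off-diagonal decay of $\Phi_k(\xi,\xi)$ for $|\xi|^2>2(2k+n)$, has already been supplied by Lemma \ref{phik} via the Mehler-type identity of Lemma \ref{formula} combined with the exponentially decaying regime of the Laguerre asymptotics in Lemma \ref{lag}; with that in hand, the present proposition is an organised splitting argument, and the generous constant $\tfrac14$ in the conclusion provides enough slack to absorb the various multiplicative constants that arise.
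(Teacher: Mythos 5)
Your argument is correct and is essentially the paper's own proof: the same splitting of the spectral sum at the turning point $|\xi|^2=2(2k+n)$, with Lemma \ref{phik} controlling the low modes and the polynomial bound on $\Phi_k(\xi,\xi)$ combined with $\psi(t)\geq c\,t^{1/2}$ making the tail summable while retaining the factor $e^{-\frac12\psi(|\xi|/\sqrt2)}$. The only cosmetic difference is that you bound $\hat f=\sum_k(-i)^kP_kf$ directly, whereas the paper first estimates $f(x)$ itself (introducing $g$ with $f=e^{-\frac12\psi(\sqrt H)}g$ and applying Cauchy--Schwarz in $k$) and then transfers the estimate to $\hat f$ via $\|P_k\hat f\|_2=\|P_kf\|_2$, which is exactly your starting identity; your explicit remark about absorbing $e^{-\gamma|\xi|^2}$ into $e^{-\frac14\psi(|\xi|/\sqrt2)}$ addresses a point the paper leaves implicit.
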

 \begin{proof}  By defining $ g $ by the relation $f = e^{-\frac{1}{2}\psi(\sqrt{H})}g $ we note that $g\in L^2(\mathbb{R}^n).$   Indeed,  from the definition of $g$ we see that 
$\|P_kg\|_2^2 =  e^{\psi(\sqrt{2k+n})}\|P_kf\|_2^2.$ But since $\|P_k f\|_2 \leq C e^{-  \psi(\sqrt{2k+n})}$, and $\psi(t)\geq c\, t^{1/2},  t\geq 1$ we have 
 $$\|g\|_2^2\leq C\sum_{k=0}^{\infty}e^{-  \psi(\sqrt{2k+n})}\leq C\sum_{k=0}^{\infty}e^{-c(2k+n)^{\frac14}}  < \infty $$  which proves that  $g\in L^2(\mathbb{R}^n).$     Now we have 
 		$$  f(x) = \sum_{k=0}^\infty  e^{-\frac{1}{2} \psi(\sqrt{2k+n}) }\, P_kg(x) .$$
 As $ P_k $ are projections, $ P_kg(x) = P_k^2 g(x) $ and hence
 		$$ |P_kg(x)| = |\int_{\R^n} \Phi_k(x,y) P_kg(y) dy | \leq \|P_kg\|_2 \, (\Phi_k(x,x))^{1/2}.$$
 Using this in the above expansion of $ f(x) $ we  see that
 		$$ |f(x)| \leq \|g\|_2 \Big( \sum_{k=0}^\infty  e^{-  \psi(\sqrt{2k+n}) }\, \, \Phi_k(x,x) \Big)^{1/2}  .$$
 The sum taken over those $ k $ for which $ (2k+n) < \frac{1}{2} |x|^2 $ gives a good estimate in view of Lemma \ref{phik}. Indeed,
 		$$ \sum_{(2k+n) < \frac{1}{2}|x|^2}^\infty  e^{-  \psi(\sqrt{2k+n}) }\, \, \Phi_k(x,x) \leq C e^{-2\gamma |x|^2}.$$
For the remaining sum we note that $ \sqrt{2k+n} \geq \frac{1}{\sqrt{2}}|x| $ and hence as $ \psi(t) $ is assumed to be increasing we have 
 		$$\psi(\frac{1}{\sqrt{2}}|x|)  \leq \psi(\sqrt{(2k+n)})  .$$
 Consequently, using he trivial estimate $ \Phi_k(x,x) \leq c(2k+n)^{n-1} $ we have 
 		$$ \sum_{(2k+n)\geq \frac{1}{2}|x|^2}^\infty  e^{-  \psi(\sqrt{2k+n}) }\, \, \Phi_k(x,x) \leq C e^{-\frac{1}{2}\psi(\frac{1}{\sqrt{2}}|x|)  } .$$
 Thus we have proved the stated estimate for $ f(x).$ Since $ P_kf $ are eigenfunctions of the Fourier transform, $ \| P_k\hat{f}\|_2 $ also satisfies the same decay estimate 
 as $ \|P_kf\|_2 $ and hence we obtain the same estimate on $ \hat{f}(\xi).$ This completes the proof.
 	\end{proof}

	\textbf{Proof of Theorem \ref{ing-her-1}:}
 	Under the  hypothesis of Theorem \ref{ing-her-1} along with the assumption that $\psi(t)\geq c t^{1/2}$ for $t\geq 1$ we have shown that $ \hat{f} $ has the decay stated in Proposition \ref{four-esti}.  Hence  applying Levinson's theorem for the Fourier transform on $ \R^n $ proved in \cite[Theorem 2.6]{BSR}, we obtain $f=0$.

   For the general case, we proceed as in the proof of Theorem \ref{ing-her}.   Let $\theta(t)=(1+t)^{-\frac12}$. Clearly, $\theta$ is a decreasing function which vanishes at infinity and  $\int_{1}^{\infty}\theta(t)t^{-1}dt  < \infty $. Then by Theorem \ref{splhermite-ingham}, for each $\delta>0$ there is a radial function $g_{\delta}$ on $\mathbb{C}^n$ which satisfies 
 	$$\|g_{\delta}\times \varphi^{n-1}_k\|_{L^2(\mathbb{C}^n)}\leq C e^{-\psi_{\delta}(\sqrt{2k+n})}$$ where $\psi_{\delta}(t)=t\delta(1+\delta t)^{-1/2}.$ Now it is easy to see that $\psi_{\delta}$ is an increasing function and  for all $t\geq 1$, $\psi_{\delta}(t)\geq a_{\delta}t^{1/2}$ where $a_{\delta}=\sqrt{2}\delta(1+\delta)^{-1/2}.$ For every $\delta>0$, we consider the function $f_{\delta}$ defined by $ f_\delta(\xi) = W(g_\delta)f(\xi).$ Then as observed in the proof of Theorem \ref{ing-her}, for small enough $\delta, \,  f_{\delta}$ vanishes on an open set $V$ and $P_kf(\xi)=R_k(g_{\delta})P_kf(\xi)$ which yields $$\|P_kf_{\delta}\|_2= |R_k(g_{\delta})|\|P_kf\|_2.$$ Now by the hypothesis of the theorem we have 
 	$$\|P_kf_{\delta}\|_2\leq e^{-(\psi+\psi_{\delta})(\sqrt{2k+n})}, \ \forall k.$$
 		But by construction, $\psi+\psi_{\delta}$ is increasing and satisfies $\psi(t)+\psi_{\delta}(t)\geq a_{\delta}t^{1/2}$ for $t>1.$ Hence from the first part of the proof, we conclude that $f_{\delta}=0$. Now by our construction (see Theorem \ref{splhermite-ingham}) $ g_\delta $ is an approximate identity for $ L^1(\C^n) $ under twisted convolution and hence  $ W(g_\delta) \rightarrow I $ on $ L^2(\R^n) $ which implies that $ f_\delta \rightarrow f $ in $ L^2(\R^n).$ Therefore, $f=0$. 
 		\qed

 \section{Construction of  examples with prescribed  spectral decay }
   In this section we indicate how to construct compactly supported functions $ g $ on $ \Omega $ with a prescribed decay on the norms of their spectral projections. We first consider the case of the  compact Riemannian symmetric space $\Omega=G/K$. In this case we construct a $K$- invariant smooth function on  $G/K$ whose Fourier coefficients have Ingham type decay.  In order to do so, we need to recall some results from Jacobi analysis. 
  
  Let $\alpha,\beta \in \mathbb{C}$ with $-\alpha\notin \mathbb{N} $ and let $n$ be a non-negative integer. We consider the polynomials $R^{(\alpha,\beta)}_n(x):=P^{(\alpha,\beta)}_n(1)^{-1}P^{(\alpha,\beta)}_n(x)$ where $P^{(\alpha,\beta)}_n(x)$ are  Jacobi polynomials of type $ (\alpha, \beta).$ For $\alpha>\beta>-\frac12$, the Fourier-Jacobi coefficients of type $(\alpha,\beta)$ of  an even  function $ f $  on $(-\pi,\pi)$  are given by 
  	$$\tilde{f}(m):=\frac{1}{\Gamma(\alpha+1)}\int_0^{\pi}f(s)R^{(\alpha,\beta)}_n(\cos s)(\sin\frac12s)^{2\alpha+1}(\cos\frac12s)^{2\beta+1}ds,~m=0,1,2,....$$  
 Let $\mathcal{H}$ be the class of even entire functions of exponential type on $ \C $ which  are  rapidly decreasing. More precisely, we say that  a function $g$ on $\C$ belongs to $\mathcal{H}$ if $g$  is an even entire function and there are positive constants $A$ and $C_m$ such that for all $z\in\C$ and for all $m=0,1,2,...$, $g$ satisfies the following: 
$$|g(z)|\leq C_m(1+|z|)^{-m}e^{A|\Im(z )|}.$$  
	We require the following Paley-Wiener type theorem for the Fourier-Jacobi coefficients proved in Koornwinder \cite{K}
	\begin{thm}
		\label{pw}
		Let $\alpha>\beta>-\frac12$. The function $\tilde{f}$ is the Fourier-Jacobi transform of an even, compactly supported, smooth function on $(-\pi,\pi)$ if and only if there exists $g \in \mathcal{H}$ such that $A<\pi$ and $\tilde{f}(m)=g(m+(\alpha+\beta+1)/2),~m\geq 0.$
	\end{thm}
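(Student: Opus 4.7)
The plan is to invoke the Jacobi function machinery, which provides an entire (in the spectral parameter) interpolation of the Jacobi polynomials. Set $\rho = (\alpha+\beta+1)/2$. There exists a family of Jacobi functions $\phi_\lambda^{(\alpha,\beta)}(s)$, entire and even in $\lambda \in \C$, satisfying a second-order Jacobi-type differential equation $\mathcal{J}\phi_\lambda^{(\alpha,\beta)} = \mu(\lambda)\phi_\lambda^{(\alpha,\beta)}$ with spectral value $\mu(\lambda)$ quadratic in $\lambda$, and normalised so that $\phi_{m+\rho}^{(\alpha,\beta)}(s) = R_m^{(\alpha,\beta)}(\cos s)$ for $m \in \Na$. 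A key estimate, analogous to Harish-Chandra's bound on spherical functions, gives $|\phi_\lambda^{(\alpha,\beta)}(s)| \leq C\, e^{|s||\Im \lambda|}$ uniformly on $[0,\pi)$.

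For the necessity direction, given an even smooth $f$ with support in $[-A,A]$ with $A<\pi$, define
$$g(\lambda) := \frac{1}{\Gamma(\alpha+1)}\int_0^A f(s)\, \phi_\lambda^{(\alpha,\beta)}(s)\, (\sin \tfrac{s}{2})^{2\alpha+1}(\cos \tfrac{s}{2})^{2\beta+1}\, ds.$$
Then $g$ is entire and even with $g(m+\rho) = \tilde{f}(m)$, and the estimate above yields $|g(\lambda)| \leq C\, e^{A|\Im\lambda|}$. Rapid decay in $|\Re \lambda|$ follows by $m$-fold integration by parts using $\mathcal{J}$ (which brings down $\mu(\lambda)^m$), exploiting the smoothness and compact support of $f$ to eliminate boundary terms.

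For the sufficiency direction, given $g \in \mathcal{H}$ of exponential type $A < \pi$ with $\tilde f(m) = g(m+\rho)$, form the Fourier-Jacobi series
$$f(s) := \sum_{m=0}^\infty \omega_m\, g(m+\rho)\, R_m^{(\alpha,\beta)}(\cos s),$$
where $\omega_m$ is the Plancherel weight. The rapid decay of $g$ ensures the series converges in $C^\infty$, and by evenness of $g$ the sum is an even function. To prove compact support in $[-A,A]$, I would employ a Watson-Sommerfeld transformation: rewrite the series as a contour integral
$$f(s) = \frac{1}{2\pi i}\int_{\mathcal{C}} g(\lambda)\, \phi_\lambda^{(\alpha,\beta)}(s)\, \frac{d\lambda}{c(\lambda)c(-\lambda)},$$
where $c$ denotes the meromorphic $c$-function from noncompact Jacobi analysis and $\mathcal{C}$ is a contour encircling the lattice $\{m+\rho : m \in \Na\}$ so that residues reproduce the series. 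For $s > A$ the combined estimates $|\phi_\lambda^{(\alpha,\beta)}(s)| \lesssim e^{s|\Im\lambda|}$ and $|g(\lambda)| \lesssim e^{A|\Im\lambda|}$ produce exponential decay $e^{-(s-A)|\Im\lambda|}$ as $\mathcal{C}$ is pushed off the real axis, while the rapid polynomial decay of $g$ controls the horizontal portions of $\mathcal{C}$; one concludes $f(s) = 0$ for $s > A$.

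The main obstacle is executing this last contour deformation rigorously. It demands uniform bounds on $\phi_\lambda^{(\alpha,\beta)}(s)$ and on the Plancherel density $|c(\lambda)c(-\lambda)|^{-1}$ across horizontal strips, together with a clean accounting of the poles of $1/(c(\lambda)c(-\lambda))$, which must lie exactly on the Fourier-Jacobi lattice $\{\pm(m+\rho)\}$ to recover the series from the residue calculus. These ingredients are standard in Jacobi analysis (Koornwinder, Flensted-Jensen), but their assembly into a clean proof is the technical heart of the argument.
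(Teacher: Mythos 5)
A preliminary remark: the paper does not prove Theorem \ref{pw} at all; it is imported wholesale from Koornwinder \cite{K}, whose argument is in any case quite different from yours --- he constructs explicit Abel-type fractional-integral transmutation operators that intertwine the Jacobi expansion with the ordinary Fourier cosine expansion while preserving supports, and thereby reduces the whole statement to the classical Paley--Wiener theorem for cosine series, where compact support follows from Poisson summation. So the comparison can only be with the literature, not with an internal proof.

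Your necessity direction is sound in outline: the interpolation $\phi_\lambda(s)={}_2F_1(\rho+\lambda,\rho-\lambda;\alpha+1;\sin^2(s/2))$ with $\rho=(\alpha+\beta+1)/2$ is entire and even in $\lambda$ with $\phi_{m+\rho}=R_m^{(\alpha,\beta)}(\cos\cdot)$, the Mehler--Dirichlet representation gives $|\phi_\lambda(s)|\le C(s)\,e^{s|\Im\lambda|}$ for $\alpha>\beta>-\tfrac12$, and the integration by parts is unobstructed (the weight kills the boundary term at $0$, the support hypothesis kills it at $\pi$). The sufficiency direction, however, has a genuine gap at its central step. You assert that $|\phi_\lambda(s)|\lesssim e^{s|\Im\lambda|}$ and $|g(\lambda)|\lesssim e^{A|\Im\lambda|}$ ``produce exponential decay $e^{-(s-A)|\Im\lambda|}$.'' They do not: the product of these two bounds is $e^{(s+A)|\Im\lambda|}$, which blows up as the contour leaves the real axis, so the deformation as described fails outright. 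The contour shift only works after you break $\phi_\lambda$ into the two Harish-Chandra-type solutions via the connection formula $\phi_\lambda=c(\lambda)\Phi_\lambda+c(-\lambda)\Phi_{-\lambda}$, where $\Phi_\lambda$ (the second-kind Jacobi function, with oscillation $e^{i\lambda s}$) satisfies $|\Phi_\lambda(s)|\lesssim e^{-s\,\Im\lambda}$ for $\Im\lambda\ge 0$ and $s$ in compact subsets of $(0,\pi)$; only then does $|g(\lambda)\Phi_\lambda(s)|\lesssim e^{-(s-A)\Im\lambda}$ hold for $s>A$. Folding the two terms using the evenness of $g$ and pushing the contour to $+i\infty$ then forces you to track the poles of $\Phi_\lambda$ and of $1/c(-\lambda)$ in the deformed region and to show they contribute nothing; that residue bookkeeping is the actual content of the Watson--Sommerfeld proof and is entirely absent from your sketch. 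Either supply it, or take Koornwinder's transmutation route, which bypasses the contour argument altogether.
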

Now we use this theorem to construct a $K$-invariant function on the compact symmetric space whose spherical transform will have Ingham type decay.
 \begin{thm}
 	 Assume that $ \theta$ is a positive decreasing function on $[0,\infty)$, vanishing at infinity for which $ \int_1^\infty \theta(t) \, t^{-1} dt < \infty.$ Then there exists a $K$-invariant function  $h\in C^{\infty} (G/K)$ supported in a small neighbourhood of identity which satisfies  $|\hat{h}(\lambda)|\leq Ce^{- \sqrt{c_\lambda} \, \theta(\sqrt{c_\lambda})}$ for all $\lambda\in \widehat{G}_K .$  
   \end{thm}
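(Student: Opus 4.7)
The plan is to apply Koornwinder's Paley--Wiener theorem (Theorem \ref{pw}) in reverse: rather than starting with a compactly supported $h$ and analysing its Jacobi transform, we prescribe the Jacobi coefficients of $h$ as the values of a suitably constructed entire function $g \in \mathcal{H}$ of exponential type $A < \pi$, and then invoke Theorem \ref{pw} to recover $h$ as an even compactly supported smooth function on $(-\pi,\pi)$. Lifting along the Cartan decomposition $G = KAK$ then produces a $K$-bi-invariant smooth function on $G$ supported in a small neighbourhood of the identity, which descends to the desired $h$ on $G/K$.

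Recall that for a rank one compact symmetric space $G/K$ the zonal spherical functions are Jacobi polynomials $R_n^{(\alpha,\beta)}(\cos r)$ with parameters $(\alpha,\beta)$ depending on $X$, and the Laplace eigenvalues satisfy $c_n = n(n+\alpha+\beta+1)$, so that $c_n + \rho_X^2 = (n+\rho_X)^2$ with $\rho_X = (\alpha+\beta+1)/2$. Consequently $n+\rho_X$ and $\sqrt{c_n}$ differ by $O(1/n)$, and a bound of the form $|g(x)| \le C e^{-|x|\theta(|x|)}$ on the real axis translates, after a mild regularisation of $\theta$ if necessary (replacing it by a smaller Lipschitz variant with the same convergent integral), into the required coefficient estimate $|g(n+\rho_X)| \le C e^{-\sqrt{c_n}\theta(\sqrt{c_n})}$. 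The case $\widehat{G}_K = 2\Na$ corresponding to $P_d(\R)$ is handled analogously by evaluating $g$ on the relevant even-shifted lattice.

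The problem therefore reduces to constructing an even entire function $g$ of exponential type strictly less than $\pi$, rapidly decreasing on $\R$, with $|g(x)| \le C e^{-|x|\theta(|x|)}$. This is the classical Ingham--Paley--Wiener construction: one selects a positive sequence $\{a_k\}_{k\ge 1}$ decreasing to zero with $\sum_k a_k < \pi/2$, tuned essentially as $a_k \sim \theta(k)/k$, and defines
\begin{equation*}
 g(z) := \prod_{k=1}^{\infty} \left( \frac{\sin(a_k z)}{a_k z} \right)^2.
\end{equation*}
The product converges locally uniformly to an even entire function of exponential type $2\sum_k a_k < \pi$. The hypothesis $\int_1^\infty \theta(t)\, t^{-1}\, dt < \infty$ is precisely what makes such a summable choice of $\{a_k\}$ possible while still producing sufficiently many zeros of $g$ on the real axis to force the exponential decay.

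The main analytic obstacle is proving the pointwise estimate $|g(x)| \le C e^{-|x|\theta(|x|)}$ together with the rapid polynomial decay required for membership in $\mathcal{H}$. For $|x|$ large one partitions the factors according to whether $a_k|x| \le 1$ (which contribute at most $1$) or $a_k|x| > 1$ (each contributing $(a_k|x|)^{-2}$), and then compares $\sum_{a_k|x|>1} \log(a_k|x|)$ with $|x|\theta(|x|)$ via an integral comparison using the divergence/convergence structure of $\int \theta(t)\,t^{-1}\,dt$. This is essentially Ingham's original calculation from \cite{I}; once it is carried out, Theorem \ref{pw} applied to $g$ yields a compactly supported smooth even function whose Jacobi coefficients are $g(n+\rho_X)$, and this produces the required $K$-invariant $h$.
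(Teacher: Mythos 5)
Your proposal is correct and follows essentially the same route as the paper: reduce to constructing an even compactly supported smooth function on $\R$ whose Fourier transform has Ingham-type decay, and transfer it to the Jacobi-coefficient setting via Koornwinder's Paley--Wiener theorem (Theorem \ref{pw}), using $c_n+\rho_X^2=(n+\rho_X)^2$ to convert the decay on the shifted lattice into the stated bound. The only difference is cosmetic: you unpack Ingham's infinite-product construction $\prod_k\left(\sin(a_kz)/(a_kz)\right)^2$ explicitly, while the paper simply cites Ingham's theorem and sidesteps your Lipschitz regularisation of $\theta$ by working with $\psi(t)=\theta(\sqrt{t})$, which makes the comparison between $(n+\rho_X)\,\theta(\sqrt{n+\rho_X})$ and $\sqrt{c_n}\,\theta(\sqrt{c_n})$ immediate from monotonicity.
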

\begin{proof}
Since $ \int_1^\infty \theta(t) \, t^{-1} dt < \infty$, an easy calculation yields $\int_1^\infty \psi(t) \, t^{-1} dt < \infty$ where $\psi(t):=\theta(\sqrt{t}),~t\geq 1.$ So, by Ingham's theorem (see \cite{I}) there exist an even  smooth function $f$ on $\R$,  compactly supported inside $(-\pi,\pi)$ whose Fourier transform satisfies
\begin{equation}
 \label{cs-1}
 |\hat{f}(\xi)|\leq Ce^{-|\xi|\psi(|\xi|)}\leq C e^{-|\xi|\theta(\sqrt{|\xi|}) } ,~\forall \xi.
\end{equation}   
Also in view of the  classical Paley-Wiener theorem for the Fourier transform (see \cite{stein}), it follows that $\hat{f}\in \mathcal{H}.$ Hence by Theorem \ref{pw}, there exist an even function $h\in C_0^{\infty}(-\pi,\pi)$ such that the Fourier-Jacobi transform $\tilde{h}(m)=\hat{f}(m+(\alpha+\beta+1)/2).$ But  this $h$ can be considered as a smooth $K$-invariant function on $G/K$ which is supported in a small neighbourhood of the identity. The  spherical transform of this function on $ G/K $ are  given precisely by the Fourier-Jacobi  coefficients of $h $ of type $(\alpha,\beta)$ where $(\alpha,\beta)$ is associated to the symmetric space $G/K.$ Now as mentioned in Section 3, $\widehat{G}_K$ can be identified with either $\mathbb{N}$ or $2\mathbb{N}$ and in all cases $c_{\lambda}\geq 1$ for all $\lambda\in \widehat{G}_K$. Also we know that the inequality in \ref{cs-1} is translation invariant. Hence we can always arrange that for all $\lambda\in \widehat{G}_K$, $|\tilde{h}(\lambda)|\leq Ce^{- \sqrt{c_\lambda} \, \theta(\sqrt{c_\lambda})}$ proving the theorem.
\end{proof}	

\begin{rem}\label{rem-ing}Once we have constructed $ f $ satisfying \ref{cs-1}, we can normalize it so that $ \| f\|_1 =1 .$ For each $ \delta > 0 $ the function $ f_\delta(x) = \delta^{-1} f(\delta^{-1} x) $ is supported in $ |x| \leq A \delta $ whose Fourier transform has the decay $  |\hat{f_\delta}(\xi)|\leq Ce^{-\delta |\xi|\psi(\delta |\xi|)}.$ As $ f_\delta $ is an approximate identity, it follows that $ \hat{f_\delta}(\xi) \rightarrow 1 $ as $ \delta \rightarrow 0 $ for all $ \xi \in \R.$ Let $ h_\delta $ be the $K$-invariant function on $G/K$ constructed as in the theorem above using $ f_\delta $  in place of $ f.$ Then $ h_\delta $ will be supported in a small neighbourhood of the identity $ \tilde{h_\delta}(\lambda) \rightarrow 1$ as $ \delta \rightarrow 0.$
\end{rem}

Having  taken care of the construction for the symmetric space, we now consider the case $ \Omega = \C^n $ and $ P = L $ and prove the following result.
 
 \begin{thm}\label{splhermite-ingham} Assume that $ \theta$ is a positive decreasing function on $[0,\infty)$, vanishing at infinity for which $ \int_1^\infty \theta(t) \, t^{-1} dt < \infty.$ 
 Then for any $ \delta > 0 $ we can construct  a radial $ g_\delta $ supported on $ B(0,\delta) $ having the following two properties: (i) $ f \times g_\delta \rightarrow f $ in $ L^1(\C^n) $ in  $ L^1(\C^n) $ for any $ f \in L^1(\C^n) $ and (ii)  $ \| g_\delta \times  \varphi_k^{n-1}\|_2 \leq C e^{-\sqrt{2k+n} \,\theta_\delta(\sqrt{2k+n})} $ for any $ k \in \Na$ where $ \theta_\delta $ satisfies the same hypothesis as $ \theta.$
 \end{thm}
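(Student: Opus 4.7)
The plan is to transfer a construction from the one-dimensional Ingham theorem to the twisted-convolution setting on $\C^n$ via the Laguerre expansion of radial functions. First, I would apply the classical Ingham theorem on $\R$ (Theorem \ref{ingh}) with weight $\theta$ to produce an even, smooth function $F$ on $\R$, compactly supported in a small interval around the origin, with $\int F = 1$ and $|\hat{F}(\xi)| \leq C e^{-|\xi|\theta(|\xi|)}$. Scaling $F$ by a dilation parameter, $F_\delta(r)=\delta^{-1}F(r/\delta)$, concentrates the support inside $[-\delta,\delta]$ while replacing the Fourier decay exponent by $\delta|\xi|\theta(\delta|\xi|)$, which produces the rescaled decay we will eventually need.

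Next, I would define a radial function $g_\delta(z)=G_\delta(|z|)$ on $\C^n$ built from $F_\delta$ so that its Laguerre coefficients inherit the Fourier-type decay. Using the fact that for radial $g$ the twisted convolution with $\varphi_k^{n-1}$ is diagonal, $g \times \varphi_k^{n-1} = R_k(g)\,\varphi_k^{n-1}$, where $R_k(g)$ is a one-dimensional integral of $G_\delta(r)$ against $L_k^{n-1}(r^2/2)e^{-r^2/4}r^{2n-1}$, I would appeal to the asymptotic behavior of the Laguerre functions (as in Lemma \ref{lag}) to translate the Fourier decay of $F_\delta$ into the decay $|R_k(g_\delta)| \leq Ce^{-\sqrt{2k+n}\,\theta_\delta(\sqrt{2k+n})}$, with $\theta_\delta(t)=\delta(1+\delta t)^{-1/2}$. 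One checks directly that $\theta_\delta$ is positive, decreasing, vanishes at infinity, and satisfies $\int_1^\infty \theta_\delta(t)\,t^{-1}\,dt<\infty$, so it inherits the same qualitative properties as $\theta$.

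For the approximate identity property (i), I would normalize so that $\int_{\C^n} g_\delta\,dz = 1$ and observe that $R_k(g_\delta)\to 1$ for each $k$ as $\delta\to 0$. Combined with the uniform bound $|R_k(g_\delta)|\leq \|g_\delta\|_1 = 1$, the Weyl transforms $W(g_\delta)=\sum_k R_k(g_\delta)P_k$ converge strongly to the identity on $L^2(\R^n)$. In twisted-convolution terms this is exactly the statement that $f\times g_\delta \to f$ in $L^1(\C^n)$, first on the dense class of Schwartz functions and then on $L^1$ by the uniform bound $\|f\times g_\delta\|_1\leq \|f\|_1\|g_\delta\|_1$.

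The main obstacle is the quantitative passage from Fourier decay on $\R$ to Laguerre decay on $\R^+$. The Laguerre functions have four distinct asymptotic regimes (pre-oscillatory, oscillatory, Airy-type, and exponentially decaying), so the integral defining $R_k(g_\delta)$ must be split accordingly. In the oscillatory range $r\in[\nu^{-1},\tfrac{3\nu}{2}]$ where $\nu \sim \sqrt{2k+n}$, the Laguerre function behaves like a modulated cosine of frequency $\sqrt{2k+n}$, and a stationary-phase/Fourier-cosine-comparison argument matches it with $\hat{F}_\delta(\sqrt{2k+n})$, yielding the exponential decay $e^{-\delta\sqrt{2k+n}\,\theta(\delta\sqrt{2k+n})}$. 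The pre-oscillatory contribution is controlled by the size bounds of Lemma \ref{lag}, and the far range $r\geq \tfrac{3}{2}\sqrt{2k+n}$ is empty or negligible once $\sqrt{2k+n}\geq 2\delta$. Ensuring that all estimates are uniform in $k$ and that the resulting $\theta_\delta$ indeed satisfies the Ingham integrability condition is the delicate technical point.
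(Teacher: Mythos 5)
Your overall strategy --- build $g_\delta$ directly on $\C^n$ from a one--dimensional Ingham function and transfer the Fourier decay to the Laguerre coefficients $R_k(g_\delta)$ via asymptotics of Laguerre functions --- is genuinely different from what the paper does, and unfortunately the step you yourself flag as ``the delicate technical point'' is a real gap, not just a technicality. The target bound is $|R_k(g_\delta)|\leq Ce^{-\sqrt{2k+n}\,\theta_\delta(\sqrt{2k+n})}$ with $\theta_\delta(t)=\delta(1+\delta t)^{-1/2}$, i.e.\ decay of order $e^{-c_\delta(2k+n)^{1/4}}$, which beats every polynomial. But Lemma \ref{lag} provides only absolute--value bounds on $\mathcal{L}_k^{\delta}$, with no phase information, so any region you control ``by the size bounds'' contributes only a polynomially small quantity: for instance the pre--oscillatory piece $0\leq r^2/2\leq \nu^{-1}$ contributes roughly $\nu^{-n}\|G_\delta\|_\infty$, which already destroys a bound of the form $e^{-ck^{1/4}}$. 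In the oscillatory range, matching against $\hat{F}_\delta(\sqrt{2k+n})$ via a Hilb--type asymptotic only works at leading order; the error terms in those expansions are $O(k^{-3/4})$ (again polynomial), so they too swamp the desired sub--exponential decay. To push a direct argument through you would need either full asymptotic expansions with exponentially small remainders or Gevrey--type control allowing optimization over repeated integration by parts --- none of which is set up here. So as written the proposal does not establish property (ii).

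The paper sidesteps this entirely: it imports Theorem \ref{heis-ingham} from \cite{BGST}, which produces a compactly supported radial $f$ on the Heisenberg group $\He^n$ with the operator bound $\hat f(\lambda)^\ast\hat f(\lambda)\leq Ce^{-2\sqrt{H(\lambda)}\,\theta(\sqrt{H(\lambda)})}$ (proved there by holomorphic--extension methods rather than Laguerre asymptotics). One then periodizes $f_\delta$ in the central variable over $2\pi\Z$, takes the first Fourier coefficient in $t$ to land exactly on the $\lambda=1$ twisted convolution, and sets $g_\delta=f_\delta^1$; the approximate identity property (i) follows from the periodization identity, and property (ii) is read off from $W(g_\delta)^\ast W(g_\delta)=\sum_k|R_k(g_\delta)|^2P_k\leq Ce^{-2\sqrt H\,\theta_\delta(\sqrt H)}$ together with $\|g_\delta\times\varphi_k^{n-1}\|_2^2=c_n\frac{(k+n-1)!}{k!(n-1)!}|R_k(g_\delta)|^2$. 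Your treatment of part (i) (normalization, $R_k(g_\delta)\to1$, density) is fine and close in spirit to the paper's, but the heart of the theorem is (ii), and there you need the Heisenberg group input or an equivalently strong substitute.
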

 
 In proving this theorem we make use of the following  result established in the context of the Heisenberg group. Let us briefly recall the setting before stating the result. We let $ \He^n = \C^n \times \R $ stand for the Heisenberg group whose group law is given by
 $$ (z,t)(w,s) = (z+w, t+s+\frac{1}{2} \Im( z \cdot \bar{w})) .$$
The convolution between two functions $ f $ and $ g $ on $ \He^n $ is defined by 
$$  f \ast g(z,t)  = \int_{\He^n} f(z-w, t-s- \frac{1}{2} \Im( z\cdot \bar{w})) g(w,s) dw ds. $$ 
We use the notation $ f^\lambda, \lambda \in \R $ to denote the inverse Fourier transform
$$  f^\lambda(z) = \int_{-\infty}^\infty f(z,t) e^{i \lambda t} dt $$
in the central variable. Then it is easy to see that 
$$ (f \ast g)^\lambda(z) = \int_{\C^n} f^\lambda(z-w) g^\lambda(w) e^{\frac{i}{2} \lambda \Im(z \cdot \bar{w})} dw .$$
The right hand side of the above is denoted by $ f^\lambda \ast_\lambda g^\lambda $ is called the $ \lambda$-twisted convolution of $ f^\lambda $ and $ g^\lambda.$ When $ \lambda = 1 $ we simply denote the $1$-twisted convolution between $ F $ and $ G $ on $ \C^n $ simply by $ F \times G.$

The Fourier transform of a function $ f \in L^1(\He^n) $ is the operator valued function defined on the set of all nonzero reals, $ \R^\ast $ given by
$$  \hat{f}(\lambda) = \int_{\He^n} e^{i\lambda t} f(z,t)  \pi_\lambda(z) dz dt = \int_{\C^n} f^\lambda(z) \pi_\lambda(z) dz $$ where $ \pi_\lambda(z) $ are the unitary operators acting on $ L^2(\R^n) $ by
$$ \pi_\lambda(x+iy)\varphi(\xi) = e^{i\lambda( x\cdot \xi+\frac{1}{2} x\cdot y)} \varphi(\xi+y).$$
For a function $ F $ on $ \C^n $ and $ \lambda \in \R^\ast $ we define the Weyl transform $ W_\lambda(F) $ by 
$$ W_\lambda(F) = \int_{\C^n} f^\lambda(z) \pi_\lambda(z) dz $$
so that $ \hat{f}(\lambda) = W_\lambda(f^\lambda).$ When $ \lambda =1 $ we write $ W(F) $ in place of $ W_1(F).$ In \cite[Theorem 4.5]{BGST} the authors have proved the following result.

\begin{thm}\label{heis-ingham}Assume that $ \theta$ is a positive decreasing function on $[0,\infty)$, vanishing at infinity for which $ \int_1^\infty \theta(t) \, t^{-1} dt < \infty.$ Then we can construct a compactly supported nontrivial radial function  $ f $ on $ \He^n$ whose Fourier transform satisfies the estimate
$$  \hat{f}(\lambda)^\ast \hat{f}(\lambda) \leq C e^{-2 \sqrt{H(\lambda)} \, \theta(\sqrt{H(\lambda)})} $$
 where $ H(\lambda) = -\Delta+\lambda^2 |x|^2 $ is the scaled Hermite operator.

\end{thm}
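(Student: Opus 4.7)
The plan is to adapt the classical one--dimensional Ingham construction to the non-commutative Heisenberg setting, exploiting the fact that under the Schr\"odinger representation $\pi_\lambda$ the Heisenberg sublaplacian acts as the scaled Hermite operator $H(\lambda)=-\Delta+\lambda^2|x|^2$ with eigenvalues $(2k+n)|\lambda|$ parametrised jointly by $k\in\Na$ and $\lambda\in\R^\ast$. Consequently, the target operator bound $\hat{f}(\lambda)^*\hat{f}(\lambda)\leq Ce^{-2\sqrt{H(\lambda)}\,\theta(\sqrt{H(\lambda)})}$ reduces, on the $k$-th eigenspace of $H(\lambda)$, to a scalar decay in the single combined variable $\mu=\sqrt{(2k+n)|\lambda|}$.

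First, since $\int_1^\infty\theta(t)t^{-1}dt<\infty$, I would invoke the classical one-dimensional Ingham theorem (Theorem \ref{ingh}) to produce an even nontrivial $g\in C_c^\infty(\R)$, supported in a small neighbourhood of the origin, such that $|\hat{g}(\xi)|\leq Ce^{-|\xi|\theta(|\xi|)}$. Next, since $(\He^n,U(n))$ is a Gelfand pair whose bounded spherical functions are of the form $e_{k,\lambda}(z,t)=e^{i\lambda t}\varphi_k^{\lambda,n-1}(z)$, the plan is to construct a $U(n)$-invariant, compactly supported $f$ on $\He^n$ whose spherical Fourier coefficients $\tilde{f}(k,\lambda)$ are modelled on $\hat{g}$ evaluated at $\mu=\sqrt{(2k+n)|\lambda|}$. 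This requires a Paley--Wiener theorem for the Heisenberg spherical transform (analogous in spirit to the Koornwinder theorem \ref{pw} used in the compact symmetric space case) which characterises compactly supported radial data in terms of admissible entire extensions of their spherical coefficients; feeding the Ingham function $\hat g$ into such a theorem yields $f$ with the prescribed joint decay.

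Given such an $f$, the radiality of $f^\lambda$ together with the Laguerre expansion $f^\lambda=\sum_k R_k^\lambda(f^\lambda)\,\varphi_k^{\lambda,n-1}$ diagonalises the Weyl transform as $W_\lambda(f^\lambda)=\sum_k c_k\,R_k^\lambda(f^\lambda)\,P_k^\lambda$, where $P_k^\lambda$ is the orthogonal projection of $L^2(\R^n)$ onto the eigenspace of $H(\lambda)$ with eigenvalue $(2k+n)|\lambda|$. On that eigenspace the self-adjoint operator $\hat{f}(\lambda)^*\hat{f}(\lambda)$ is then multiplication by $|c_k R_k^\lambda(f^\lambda)|^2$, so the required operator inequality follows at once from the scalar bound $|R_k^\lambda(f^\lambda)|\leq Ce^{-\mu\theta(\mu)}$ supplied by the spherical Paley--Wiener construction.

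The principal obstacle is producing compactly supported data whose spectral information decays in the mixed variable $\mu=\sqrt{(2k+n)|\lambda|}$ and not merely separately in $k$ and $|\lambda|$. Indeed, a naive tensor product $f(z,t)=\phi(z)g(t)$ yields only the two separate decays, which a short calculation using monotonicity of $\theta$ shows do not combine into the joint estimate in the regime where one of $k,|\lambda|$ dominates the other. The resolution is to build the profile in $\mu$ directly: exploit the Heisenberg dilations $(z,t)\mapsto(\delta z,\delta^2 t)$ to match the scaling of $H(\lambda)$, split the spectral analysis into the regimes $|\lambda|\leq\sqrt{2k+n}$ and $|\lambda|\geq\sqrt{2k+n}$, and apply the Laguerre/spherical Paley--Wiener theorem together with the monotonicity of $\theta$ within each regime to establish the desired unified bound.
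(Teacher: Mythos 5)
First, a point of reference: the paper does not actually prove this theorem. It is quoted verbatim from \cite[Theorem 4.5]{BGST} and then used, after periodisation to the reduced Heisenberg group, to produce the family $g_\delta$ on $\C^n$ in Theorem \ref{splhermite-ingham}; so your proposal has to be measured against the construction in \cite{BGST} rather than against anything written here. Your reduction of the operator inequality to a scalar one is correct and standard: for radial $f$ the transform $\hat f(\lambda)=W_\lambda(f^\lambda)$ is diagonal in the Hermite basis of $H(\lambda)$, say $W_\lambda(f^\lambda)=\sum_k R_k^\lambda(f^\lambda)P_k^\lambda$, so the asserted bound is equivalent to $|R_k^\lambda(f^\lambda)|\leq Ce^{-\mu\theta(\mu)}$ with $\mu=\sqrt{(2k+n)|\lambda|}$. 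You have also put your finger on the genuine difficulty, namely that the decay must be in the joint variable $\mu$ and not separately in $k$ and $\lambda$.

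The gap is that the step meant to overcome this difficulty is not an argument. You appeal to ``a Paley--Wiener theorem for the Heisenberg spherical transform'' into which the one-dimensional Ingham function $\hat g$ could be fed; no such theorem is available in the form you need, and the analogy with Koornwinder's Theorem \ref{pw} is misleading: there the spectral parameter is a single integer and the characterisation is by an entire extension in that one variable, whereas the Heisenberg spherical spectrum is the two-parameter fan $\{(2k+n)|\lambda|\}$ and there is no characterisation of compactly supported radial functions by an entire extension in the single variable $\mu$. Your closing paragraph (dilations, splitting into the regimes $|\lambda|\lessgtr\sqrt{2k+n}$) describes how one might \emph{verify} such an estimate for a function already in hand, not how to build one; the existence of the function is the entire content of the theorem. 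A device that actually closes the gap is the spectral calculus of the sublaplacian $\mathcal{L}$: since $\widehat{\mathcal{L}f}(\lambda)=\hat f(\lambda)H(\lambda)$, the joint variable $\mu$ is exactly $\sqrt{\mathrm{spec}(\mathcal{L})}$, so one may take $f=\widehat{g}(\sqrt{\mathcal{L}}\,)\psi$ with $\psi$ a fixed compactly supported radial bump and $g$ the compactly supported, even, one-dimensional Ingham function; then $\hat f(\lambda)=\widehat{g}(\sqrt{H(\lambda)}\,)\hat\psi(\lambda)$ yields the required decay because everything commutes for radial data, and $f$ remains compactly supported because $\widehat{g}(\sqrt{\mathcal{L}}\,)=c\int g(s)\cos(s\sqrt{\mathcal{L}}\,)\,ds$ and the wave group has finite propagation speed. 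Without some such mechanism --- or the explicit construction of \cite[Section 4]{BGST}, which in addition arranges $f\geq 0$ and $\|f\|_1=1$, properties this paper relies on later --- your outline does not produce the function.
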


In \cite[Section 4]{BGST} the authors have constructed such a function with the added properties that $ f \geq 0 $ and $ \|f\|_1=1.$ It then follows by standard arguments that the family of dilated functions
$ f_\delta(z,t) = \delta^{-(2n+2)} f(\delta^{-1}z, \delta^{-2}t),\, \delta>0 $ is an approximate identity for $ L^1(\He^n)$: i.e. $  g \ast f_\delta \rightarrow g $  as $ \delta \rightarrow 0 $ in $ L^1(\He^n).$ \\

{\bf{Proof of Theorem \ref{splhermite-ingham}:}} We let $ \He^n_{red} = \He^n /\Gamma $ where $ \Gamma $ is the subgroup $ \{0\} \times 2\pi \Z $ of $ \He^n.$ Note that as a manifold, $\He^n /\Gamma $ can be identified with $ \C^n \times S^1 $ and functions on $\He^n /\Gamma $ are in one to one correspondence with functions on $ \He^n $ that are $ 2\pi$ periodic in the $t$-variable. Given $ f_\delta $ as above, we consider the periodisation
$$ F_\delta(z,t) = \sum_{k=-\infty}^\infty f_\delta(z,t+2k\pi) $$
which is a function on $ \He^n /\Gamma .$ Note that $ F_\delta \geq 0, \int_{\He^n /\Gamma } F_\delta(z,t) dz dt = 1 $ and
$$ \int_{\He^n /\Gamma } G((z,t)(w,s)^{-1}) F_\delta(w,s) dw ds = \int_{\He^n} G((z,t)(w,s)^{-1}) f_\delta(w,s) dw ds $$
for any $ G \in L^1(\He^n /\Gamma).$ The integral converges due to the fact that $ f_\delta $ is compactly supported.
From the above expression, it is easy to see that $ G \ast F_\delta \rightarrow G $ in $ L^1(\He^n /\Gamma).$ Since 
$$ \int_0^{2\pi} G\ast F_\delta(z,t) e^{it} dt = \int_{\C^n} G^1(z-w) f_\delta^1(w) e^{\frac{i}{2} \Im(z \cdot \bar{w})} dw $$
it follows that $ \|G^1 \times f_\delta^1 - G^1\|_1 \rightarrow 0 ,$  in view of the inequality
$$  \int_{\C^n} | G^1 \times f_\delta^1(z) - G^1(z)| dz \leq  \int_{\He^n/\Gamma} | G\ast f_\delta(z,t) - G(z,t)| dz dt .$$ 
The functions $ g_\delta(z) = f_\delta^1(z) $ give us the required family. It is clear that $ g \times g_\delta $ converges to $ g $ in $ L^1(\C^n) $ which can be seen by taking $ G(z,t) = g(z)e^{-it} $ in the above.

It remains to verify condition (ii) of Theorem \ref{splhermite-ingham}. As in the proof of  \cite[Theorem 4.6]{BGST} we can assume that $ \delta =1.$ Then the estimate  on $ \hat{f}(\lambda) $ gives us
 $$ W(g_1)^\ast W(g_1) = \hat{f}(1)^\ast \hat{f}(1) \leq C e^{-2 \sqrt{H} \,\theta(\sqrt{H})} .$$
 As $ f $ is radial, so is $ g_1 $ and it well known that 
 $$ W(g_1) = \sum_{k=0}^\infty  R_k(g_1) P_k, \,\, \,\,   W(g_1)^\ast W(g_1) = \sum_{k=0}^\infty  |R_k(g_1)|^2 P_k,$$
where $ R_k(g_1) $ are given by the equation $ g_1 \times \varphi_k^{n-1}(z) = R_k(g_1) \varphi_k(z).$ Consequently, the condition on $\hat{f}(1)^\ast \hat{f}(1)$ gives us
$$  \| g_1 \times \varphi_k^{n-1} \|_2^2 = c_n \frac{(k+n-1)!}{k!(n-1)!} |R_k(g_1)|^2 \leq C e^{-2 \sqrt{2k+n} \,\theta(\sqrt{2k+n})} .$$
This completes the proof of Theorem \ref{splhermite-ingham}.\\

{\bf{Proof of Theorem 1.3:}} The necessity of the condition $ \int_1^\infty \theta(t) t^{-1} dt <\infty $ for the existence of compactly supported $ f $ satisfying 
 \begin{equation} 
	 \|P_kf\|_2 \leq C e^{-\sqrt{\lambda_k}\theta(\sqrt{\lambda_k})}
\end{equation}
has been already proved. In fact, this part follows from  Chernoff's theorem  valid for any of the pairs $ ( \Omega, P) .$  The sufficiency part has been already verified for $ (X, \Delta) $ and $ (\C^n, L) $ in  the course of proofs  of Theorem \ref{ing-sym} and Theorem \ref{ing-sp-her}. Thus, we only need to construct compactly supported $ f $ with the prescribed decay in the Hermite case.\\

 We claim that to construct such a function,  we can make use of the function constructed for the Special Hermite case.  To substantiate our claim, we require  the following result about the Hermite projections of radial functions on  $\R^n$, proved in \cite[Theorem 3.4.1]{TH1}.  \\ 
 \begin{thm} \label{proj}
Suppose $f(x)=f_0(|x|)$ is radial. Then for any $ k $ 
$$P_{2k+1}(f)=0,\,\,\,  \ \ P_{2k}(f)(x)=R_{k}^{\frac{n}{2}-1}(f_0)L_k^{\frac{n}{2}-1}(|x|^2)e^{-\frac{|x|^2}{2}}$$ where  $R_{k}^{\frac{n}{2}-1}(f_0)$ are the Laguerre coefficients of $ f_0 $  given by 
$$ R_{k}^{\frac{n}{2}-1}(f_0)=2\frac{\Gamma(k+1)}{\Gamma(k+n/2 )}\int_{0}^{\infty}f_0(r)L^{\frac{n}{2}-1}_k(r^2)e^{-\frac{r^2}{2}}r^{n-1}dr.$$
 \end{thm}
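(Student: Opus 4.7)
The plan exploits rotation-invariance of $H=-\Delta+|x|^2$, parity, and Laguerre orthogonality. Since $H$ commutes with the $O(n)$-action on $L^2(\R^n)$, each projection $P_k$ commutes with rotations, so $P_k f$ is radial whenever $f$ is. Moreover the Hermite functions satisfy $\Phi_\alpha(-x)=(-1)^{|\alpha|}\Phi_\alpha(x)$, so every element of the range of $P_k$ has parity $(-1)^k$. A radial function on $\R^n$ is automatically even, so for odd $k$ the projection $P_{2k+1}(f)$ is simultaneously even and odd; this forces $P_{2k+1}(f)=0$.

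For the even case I would show that the space of radial eigenfunctions of $H$ with eigenvalue $4k+n$ is one-dimensional, spanned by $u_k(x) := L_k^{n/2-1}(|x|^2)\,e^{-|x|^2/2}$. Writing a radial eigenfunction as $\psi(|x|)$, the equation $H\psi=(4k+n)\psi$ becomes the radial ODE
\[
-\psi''(r) - \frac{n-1}{r}\psi'(r) + r^2 \psi(r) = (4k+n)\psi(r).
\]
The substitution $\psi(r)=w(r^2)\,e^{-r^2/2}$ converts this into the Laguerre equation of parameter $n/2-1$ for $w$, and among solutions lying in $L^2(\R^+, r^{n-1}dr)$ only $w=L_k^{n/2-1}$ survives (up to scalar). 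Hence $P_{2k}(f) = R_k^{n/2-1}(f_0)\, u_k$ for some scalar depending linearly on $f_0$.

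To identify the scalar, take the $L^2(\R^n)$-inner product of the expansion $f=\sum_k R_k^{n/2-1}(f_0)\, u_k$ against $u_k$. Passing to polar coordinates and then substituting $t=r^2$, the $(n-1)$-sphere surface-area factor cancels on both sides and the computation reduces to the classical Laguerre orthogonality relation
\[
\int_0^\infty L_k^{\delta}(t)\, L_m^{\delta}(t)\, e^{-t} t^{\delta}\, dt = \frac{\Gamma(k+\delta+1)}{k!}\,\delta_{km}
\]
with $\delta=n/2-1$, which yields the stated formula at once. The conceptually delicate step is the uniqueness used in the previous paragraph: verifying that among solutions of the radial ODE only one is admissible in $L^2(\R^+, r^{n-1}dr)$. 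This can be argued either by analyzing the regular-at-zero and decaying-at-infinity conditions in the associated confluent hypergeometric picture, or representation-theoretically by noting that the trivial $O(n)$-subrepresentation of the $2k$-th Hermite eigenspace occurs with multiplicity one.
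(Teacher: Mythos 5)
Your argument is correct, and it is worth noting that the paper itself offers no proof of this statement: it is quoted verbatim from Thangavelu's \emph{Lectures on Hermite and Laguerre expansions} (Theorem 3.4.1 of \cite{TH1}), where the result is obtained by computing with the explicit projection kernel $\Phi_k(x,y)$ (essentially the Laguerre expression recorded as Lemma \ref{formula} here, via Mehler-type generating function identities). Your route is genuinely different and more structural: parity of the Hermite functions kills $P_{2k+1}f$ for radial (hence even) $f$, rotation-invariance of $P_{2k}$ plus multiplicity one of the trivial $O(n)$-type in the $(4k+n)$-eigenspace pins down $P_{2k}f$ up to a scalar, and Laguerre orthogonality in the variable $t=r^2$ identifies the scalar as exactly $R_k^{n/2-1}(f_0)$ (your normalization $\int_0^\infty L_k^{\delta}(t)^2 e^{-t}t^{\delta}\,dt = \Gamma(k+\delta+1)/k!$ reproduces the stated constant $2\,\Gamma(k+1)/\Gamma(k+n/2)$ after the surface measure of $S^{n-1}$ cancels). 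What the kernel computation buys is an explicit formula with no appeal to uniqueness; what your argument buys is transparency and independence from the special Hermite machinery.

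The one step you rightly flag as delicate deserves a cleaner justification than the ODE-in-$L^2(\R^+,r^{n-1}dr)$ phrasing: for $n\le 3$ the second solution of the Laguerre equation, transported back to $\psi(r)=w(r^2)e^{-r^2/2}$, is locally square integrable near $r=0$ against $r^{n-1}dr$, so local integrability alone does not discard it, and one would have to argue separately at infinity (or about smoothness at the origin). The cheapest complete fix is to observe that every element of the range of $P_{2k}$ is a finite linear combination of Hermite functions, hence a polynomial times $e^{-|x|^2/2}$; a radial such function has the form $w(|x|^2)e^{-|x|^2/2}$ with $w$ a \emph{polynomial} solving the Laguerre equation of type $n/2-1$ and degree parameter $k$, and the polynomial solution space is one-dimensional. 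Alternatively your representation-theoretic remark (the trivial $O(n)$-isotype occurs once in the $2k$-th eigenspace) settles it. With either patch the proof is complete.
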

 
 The construction of the required function is carried out in the following  proposition.
  
\begin{prop}\label{ hermite-ingham} Assume that $ \theta$ is a positive decreasing function on $[0,\infty)$, vanishing at infinity for which   $ \int_1^\infty \theta(t) \, t^{-1} dt < \infty.$ Then there exists a compactly supported continuous function $f$ on $\R^n$ satisfying  $  \|P_k f\|_2 \leq C e^{-  \theta(\sqrt{2k+n})\sqrt{2k+n}} $ for all $k\in\mathbb{N}. $ 
\end{prop}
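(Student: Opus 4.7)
The strategy is to apply the Weyl transform $W(g_\delta)$ of the radial function $g_\delta$ from Theorem \ref{splhermite-ingham} to a suitable compactly supported vector in $L^2(\R^n)$. While the hint suggests taking the radial profile of $g_\delta$ as a radial function on $\R^n$ and invoking Theorem \ref{proj}, that route would require reconciling Laguerre type $n-1$ (natural for special Hermite on $\C^n$) with the type $n/2-1$ appearing in Theorem \ref{proj}; the Weyl transform bypasses this mismatch because the Hermite operator on $\R^n$ and the special Hermite operator on $\C^n$ share the spectral parameter $\sqrt{2k+n}$.

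Fix $\delta>0$ small enough that the approximate-identity property of $g_\delta$ guarantees $R_0(g_\delta)\ne 0$, and let $g:=g_\delta$ denote the compactly supported radial function on $\C^n$ produced by Theorem \ref{splhermite-ingham}, so that $\|g\times\varphi_k^{n-1}\|_{L^2(\C^n)}\le C e^{-\sqrt{2k+n}\theta(\sqrt{2k+n})}$. Pick a nontrivial compactly supported continuous $u_0\in L^2(\R^n)$ with $P_0 u_0\ne 0$ (for example a smooth cutoff of the Hermite ground state), and define
\[
f(\xi):=W(g)u_0(\xi)=\int_{\C^n} g(x+iy)\, e^{i(x\cdot\xi+\frac12 x\cdot y)}\, u_0(\xi+y)\, dx\, dy.
\]
Because $g$ is supported in $\{|z|\le\delta\}$ and $u_0$ is compactly supported, the integrand vanishes unless $|y|\le\delta$ and $\xi+y\in\operatorname{supp}(u_0)$; hence $f$ is compactly supported on $\R^n$ and continuous.

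Since $g$ is radial, $W(g)=\sum_{k\ge 0} R_k(g) P_k$ with scalars $R_k(g)$ defined by $g\times\varphi_k^{n-1}=R_k(g)\varphi_k^{n-1}$; consequently $P_k f=R_k(g)P_k u_0$ and $\|P_k f\|_2\le |R_k(g)|\,\|u_0\|_2$. The identity
\[
\|g\times\varphi_k^{n-1}\|_2^2=c_n\,\frac{(k+n-1)!}{k!(n-1)!}\,|R_k(g)|^2,
\]
already used in the proof of Theorem \ref{splhermite-ingham}, combined with the polynomial growth $(k+n-1)!/k!\asymp k^{n-1}$, yields $|R_k(g)|\le C k^{-(n-1)/2} e^{-\sqrt{2k+n}\theta(\sqrt{2k+n})}$, and hence
\[
\|P_k f\|_2\le C\, e^{-\sqrt{2k+n}\theta(\sqrt{2k+n})},\quad k\in\Na.
\]
Non-triviality follows from $P_0 f=R_0(g)P_0 u_0\ne 0$. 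The only technical point is the asymptotic analysis of the combinatorial factor via Laguerre orthogonality; no major obstacle arises in this approach, and in particular the polynomial $k^{-(n-1)/2}$ comes for free from the norm of $\varphi_k^{n-1}$, so no further strengthening of $\theta$ is needed.
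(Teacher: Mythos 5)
Your argument is correct in substance but takes a genuinely different route from the paper's. The paper transplants the radial profile itself: for even $n=2m$ it takes the function $g$ of Theorem \ref{splhermite-ingham} on $\C^m$ (so that the Laguerre type $m-1$ coincides with the type $n/2-1$ appearing in Theorem \ref{proj}), sets $f(z)=g(\sqrt{2}\,z)$ on $\R^n$, and reads off $\|P_{2k}f\|_2$ from the formula for Hermite projections of radial functions; odd $n$ is then handled by constructing $F$ on $\R^{n+1}$ and integrating against $h_0$ in the extra variable. You instead keep $g$ on $\C^n$ and apply its Weyl transform to a compactly supported cutoff $u_0$, using $W(g)=\sum_k R_k(g)P_k$ together with the fact that $\pi(x+iy)$ translates the argument by $y$ with $|y|\le\delta$, so that $W(g)u_0$ remains compactly supported and continuous. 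This buys uniformity in the dimension --- no even/odd dichotomy, no type mismatch --- and yields the decay at every eigenvalue $2k+n$ rather than only at the even-indexed projections; it is arguably the cleaner argument. Two points should be tightened. First, Theorem \ref{splhermite-ingham} as stated gives the bound with $\theta_\delta$, and the dilation that makes $\delta$ small (which you invoke to secure $R_0(g_\delta)\neq 0$) replaces $t\,\theta(t)$ by $\delta t\,\theta(\delta t)$, which is in general strictly weaker; either fix $\delta=1$, where the proof of Theorem \ref{splhermite-ingham} produces the bound with $\theta$ itself, and choose $u_0$ with $P_ju_0\neq 0$ for some $j$ with $R_j(g)\neq 0$ (such a $j$ exists because the Weyl transform is injective and $g\neq 0$), or first apply Theorem \ref{splhermite-ingham} to $\tilde{\theta}(t)=\delta^{-1}\theta(\delta^{-1}t)$, which satisfies the same integrability hypothesis and whose associated $\tilde{\theta}_\delta$ is $\theta$. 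Second, the polynomial gain $k^{-(n-1)/2}$ you extract from $\|\varphi_k^{n-1}\|_2$ is harmless but unnecessary: the crude bound $k!(n-1)!/(k+n-1)!\le 1$ already gives $|R_k(g)|\le C e^{-\sqrt{2k+n}\,\theta(\sqrt{2k+n})}$.
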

\begin{proof}
	First we deal with the even dimensional case, $n=2m.$  Given $ \theta $ as in the proposition,  consider the function  $\Theta(t):= \sqrt{2}\, \theta(\sqrt{2} \,t), ~t>0. $ Then from the hypothesis, it is clear that $\int_{1}^{\infty}\Theta(t)t^{-1}dt<\infty.$ By Theorem \ref{splhermite-ingham} there exists a compactly supported radial function $g$ on $\C^m$ satisfying   the decay condition
	\begin{equation}
	\label{est_H}
	\frac{(k+m-1)!}{k!(m-1)!} |R_k(g)|^2  \leq C e^{-2 \sqrt{2k+m}\,\,\Theta(\sqrt{2k+m})}
	\end{equation} 
where we have used  the same notation  as in Theorem \ref{splhermite-ingham}, viz.
$$R_k(g)=\frac{ k! (m-1)!}{(k+m-1)!} \int_{\C^m}g(z)L_k^{m-1}\left(\frac{1}{2}|z|^2\right)e^{-\frac{1}{4}|z|^2}dz.$$  
Note that by a simple change of variable the above transforms into 
	$$R_k(g)=2^m\frac{   k! (m-1)!}{(k+m-1)!} \int_{\C^m}g(\sqrt{2}z)L_k^{m-1}\left( |z|^2\right)e^{-\frac{1}{2}|z|^2}dz.$$  
By defining  $f(x,y)=g(\sqrt{2} z),~ z= x+iy \in \C^m $ we see that  $f$ is radial and hence by Theorem \ref{proj} we have $P_{2k+1}f=0$ and 
$$P_{2k}(f)(x,y)= R_{k}^{\frac{n}{2}-1}(f_0)L_k^{\frac{n}{2}-1}(|z|^2)e^{-\frac{|z|^2}{2}} = c_n R_k(g) \varphi_k^{n-1}(\sqrt{2}z).$$   
Since $ \| \varphi_k^{n-1}\|_2^2 $ is a constant multiple of $ \frac{ k! (m-1)!}{(k+m-1)!} $ (see e.g. \cite[Proposition 3.4.1]{TH1}), we get
$$ \|P_{2k}f \|_2^2 = c_n^\prime \frac{(k+m-1)!}{k!(m-1)!} |R_k(g)|^2  \leq C e^{-2 \sqrt{2k+m}\,\,\Theta(\sqrt{2k+m})}   $$
where we have used the estimate \ref{est_H}. Finally recalling the definition of $\Theta $ and using 
$$\|P_{2k}f\|_2^2 \leq Ce^{-2\sqrt{4k +n} \, \theta(\sqrt{4k+n})} $$ proving the required Ingham type decay. This takes care of the even dimensional case.

When $ n = 2m+1 $ we construct a compactly supported function $  F(x,t) $ on $ \R^{n+1} $ satisfying the estimate 
$$   \| P_k F\|_2^2 \leq C e^{-2\sqrt{2k+n+1}\,\Theta(\sqrt{2k+n+1})} $$
where $ \Theta $ is defined in terms of $ \theta $ by the relation $ \theta(t) = \Theta(\sqrt{1+t^2}).$
If $ h_j(t) $ stand for the normalised Hermite functions on $ \R$ then the Hermite functions on $ \R^{n+1} $ are given by $ \Phi_{(\alpha,j)}(x,t) = \Phi_\alpha(x)h_j(t), \alpha \in \Na^n $ so that
$$ \|P_k F\|_2^2 = \sum_{j=0}^k \sum_{|\alpha| = k-j} |( F, \Phi_{(\alpha,j)}) |^2.$$
By defining $ f $ on $ \R^n $ by $ f(x) = \int_{-\infty}^\infty F(x,t) h_0(t) dt $ we  get a compactly supported function for which 
$$  \|P_k f\|_2^2 =   \sum_{|\alpha| = k} |( F, \Phi_{(\alpha,0)}) |^2 \leq   \| P_k F\|_2^2 \leq C e^{- 2\sqrt{2k+n+1}\,\Theta(\sqrt{2k+n+1})} .$$ 
In view of the relation between $ \Theta $ and $ \theta,$ the function $ P_k f $ has  the required decay.
\end{proof}

\section*{Acknowledgments} The authors are immensely thankful to the referee for his meticulous reading of the manuscript  and making very useful suggestions  which have greatly improved the exposition. The first author is supported by Int. Ph.D. scholarship from Indian Institute of Science.
The second author is supported by  J. C. Bose Fellowship from the Department of Science and Technology, Government of India.
%%%%%%%%%%%%%%%%%%%%%%%%%%%%%%%%%%%%%%%%%%%%%%%%%%%%%%

\end{document}